\newenvironment{enumeratei}{\begin{enumerate}[\upshape (i)]}{\end{enumerate}}
\newenvironment{enumeraten}{\begin{enumerate}[\upshape 1.]}{\end{enumerate}}
\newcommand{\matnorm}[1]{{\left\vert\kern-0.25ex\left\vert\kern-0.25ex\left\vert #1 
    \right\vert\kern-0.25ex\right\vert\kern-0.25ex\right\vert}}
\newcommand{\Var}[0]{\text{Var}}
\newcommand{\diag}[0]{\text{diag}}
\newcommand{\Id}[0]{\text{Id}}
\newcommand{\Prob}[0]{\mathds{P}}
\theoremstyle{remark}
\newtheorem{thm}{Theorem}[section]
\newtheorem{lem}[thm]{Lemma}
\newtheorem{cor}[thm]{Corollary}
\newtheorem{defn}[thm]{Definition}
\newtheorem{rem}[thm]{Remark}
\renewcommand{\leq}{\leqslant} 
\renewcommand{\geq}{\geqslant}
\def\qed{ \hfill $\blacksquare$}  
\newcommand{\rd}{\mathrm{d}}
\newcommand{\mc}[1]{\mathcal{#1}}
\newcommand{\cA}{\mathcal{A}}
\newcommand{\cE}{\mathcal{E}}
\newcommand{\cG}{\mathcal{G}}
\newcommand{\cM}{\mathcal{M}}
\newcommand{\cP}{\mathcal{P}}
\newcommand{\cW}{\mathcal{W}}
\newcommand{\vone}{\mathbf{1}}
\newcommand{\vG}{\mathbf{G}}
\newcommand{\fh}{\mathfrak{h}}
\newcommand{\bb}[1]{\mathbb{#1}}
\newcommand{\bH}{\mathbb{H}}
\newcommand{\bN}{\mathbb{N}}
\newcommand{\bR}{\mathbb{R}}
\newcommand{\bS}{\mathbb{S}}
\newcommand{\sC}{\mathscr{C}}
\newcommand{\sD}{\mathscr{D}}
\DeclareMathOperator{\E}{\mathds{E}}
\DeclareMathOperator{\argmax}{argmax}
\DeclareMathOperator{\tr}{tr} 
\begin{document}

\title[Diffusion $K$-means and SDP relaxations]{Diffusion $K$-means clustering on manifolds: provable exact recovery via semidefinite relaxations}

\author[Xiaohui Chen]{Xiaohui Chen}
\address{\newline Department of Statistics\newline
University of Illinois at Urbana-Champaign\newline
725 S. Wright Street, Champaign, IL 61820\newline
{\it E-mail}: \href{mailto:xhchen@illinois.edu}{\tt xhchen@illinois.edu}\newline 
{\it URL}: \href{http://publish.illinois.edu/xiaohuichen/}{\tt http://publish.illinois.edu/xiaohuichen/}
}
\author[Yun Yang]{Yun Yang}
\address{\newline Department of Statistics\newline
University of Illinois at Urbana-Champaign\newline
725 S. Wright Street, Champaign, IL 61820\newline
{\it E-mail}: \href{mailto:yy84@illinois.edu}{\tt yy84@illinois.edu}\newline 
{\it URL}: \href{https://sites.google.com/site/yunyangstat/}{\tt https://sites.google.com/site/yunyangstat/}
}

\date{First arXiv version: March 11, 2019. This version: \today}
\subjclass[2010]{Primary: 62H30; Secondary: 90C22.}
\keywords{Manifold clustering, $K$-means, Riemannian submanifolds, diffusion distance, semidefinite programming, random walk on random graphs, Laplace-Beltrami operator, mixing times, adaptivity}
\thanks{X. Chen's research is supported in part by NSF DMS-1404891, NSF CAREER Award DMS-1752614, UIUC Research Board Awards (RB17092, RB18099), and a Simons Fellowship (Award Number: 663673). Y. Yang's research is supported in part by NSF DMS-1810831. This work is completed in part with the high-performance computing resource provided by the Illinois Campus Cluster Program at UIUC. Authors are listed in alphabetical order}

\begin{abstract}
We introduce the {\it diffusion $K$-means} clustering method on Riemannian submanifolds, which maximizes the within-cluster connectedness based on the diffusion distance. The diffusion $K$-means constructs a random walk on the similarity graph with vertices as data points randomly sampled on the manifolds and edges as similarities given by a kernel that captures the local geometry of manifolds. The diffusion $K$-means is a multi-scale clustering tool that is suitable for data with non-linear and non-Euclidean geometric features in mixed dimensions. Given the number of clusters, we propose a polynomial-time convex relaxation algorithm via the semidefinite programming (SDP) to solve the diffusion $K$-means. In addition, we also propose a nuclear norm regularized SDP that is adaptive to the number of clusters. In both cases, we show that exact recovery of the SDPs for diffusion $K$-means can be achieved under suitable between-cluster separability and within-cluster connectedness of the submanifolds, which together quantify the hardness of the manifold clustering problem. We further propose the {\it localized diffusion $K$-means} by using the local adaptive bandwidth estimated from the nearest neighbors. We show that exact recovery of the localized diffusion $K$-means is fully adaptive to the local probability density and geometric structures of the underlying submanifolds. 
\end{abstract}
\maketitle


\section{Introduction}
\label{sec:introduction}

This article studies the clustering problem of partitioning $n$ data points to $K$ disjoint (smooth) Riemannian submanifolds with $1 \leq K \leq n$. 

\subsection{Problem formulation}

Let $\cM_{k},k=1,\dots,K$ be compact and connected Riemannian manifolds of dimension $q_{k}$. Suppose that $\cM_{k}$ can be embedded as a {\it submanifold} of an ambient Euclidean space $\bR^{p}$ equipped with the Euclidean metric $\|\cdot\|$ (i.e., there is an immersion $\varphi_{k}: \cM_{k} \to \bR^{p}$ such that the differential $\rd \varphi_{k}$ is injective and $\varphi_{k}$ is a homeomorphism onto $\varphi_{k}(\cM_{k}) \subset \bR^{p}$; cf. \cite{doCarmo1992_RG}). In our clustering setting, we work with {\it disjoint} submanifolds $\cM_{1},\dots,\cM_{K}$ in $\bR^{p}$ and denote $S = \bigsqcup_{k=1}^{K} \cM_{k}$ as their disjoint union. Each smooth submanifold $\cM_{k}$ is endowed with the Riemannian metric $\rho_{k}$ induced from $\|\cdot\|$, and we denote $\sD_{k}$ as the Borel $\sigma$-algebra on $\cM_{k}$ (i.e., the $\sigma$-algebra generated by the open balls in $\cM_{k}$ with respect to $\rho_{k}$). Let $X_{1}^{n} := \{X_{1},\dots,X_{n}\}$ be a sequence of independent random variables taking values in $S$. Suppose that there exists a clustering structure $G_{1}^{*},\dots,G_{K}^{*}$ (i.e., a partition on $[n] := \{1,\dots,n\}$ satisfying $\bigsqcup_{k=1}^{K} G_{k}^{*} = [n]$) such that each of the $n$ data points belongs to one of the $K$ clusters: if $i \in G_{k}^{*}$, then $X_{i} \sim \mu_{k}$ for some probability distribution $\mu_{k}$ supported on $\cM_{k}$. Given the observations $X_{1}^{n}$, the task of this paper is to develop computationally tractable algorithms with strong theoretical guarantees for recovering the true clustering structure $G_{1}^{*},\dots,G_{K}^{*}$. 

\begin{figure}[t!] 
    \centering
    \includegraphics[scale=0.4]{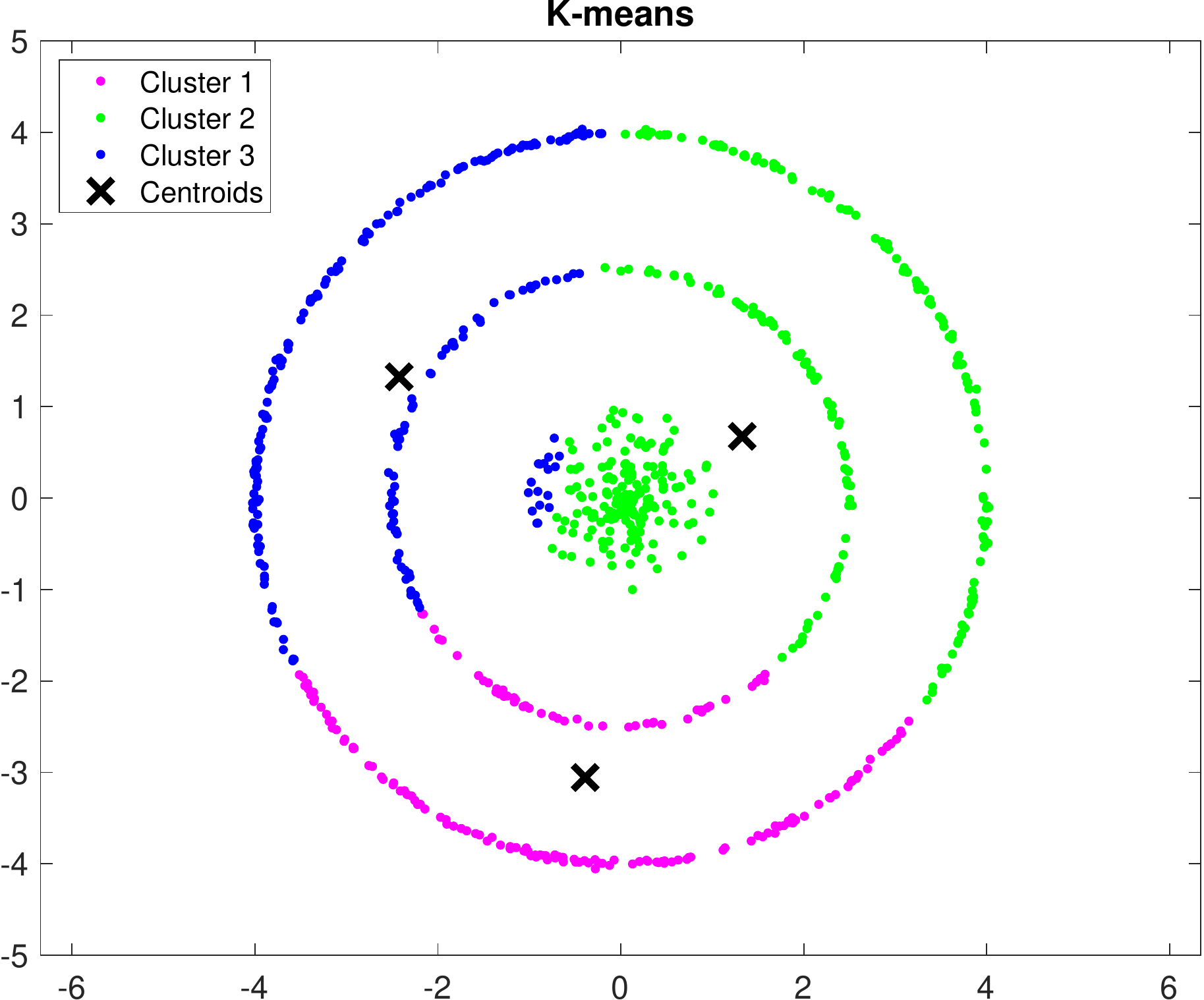}\\
    \includegraphics[scale=0.4]{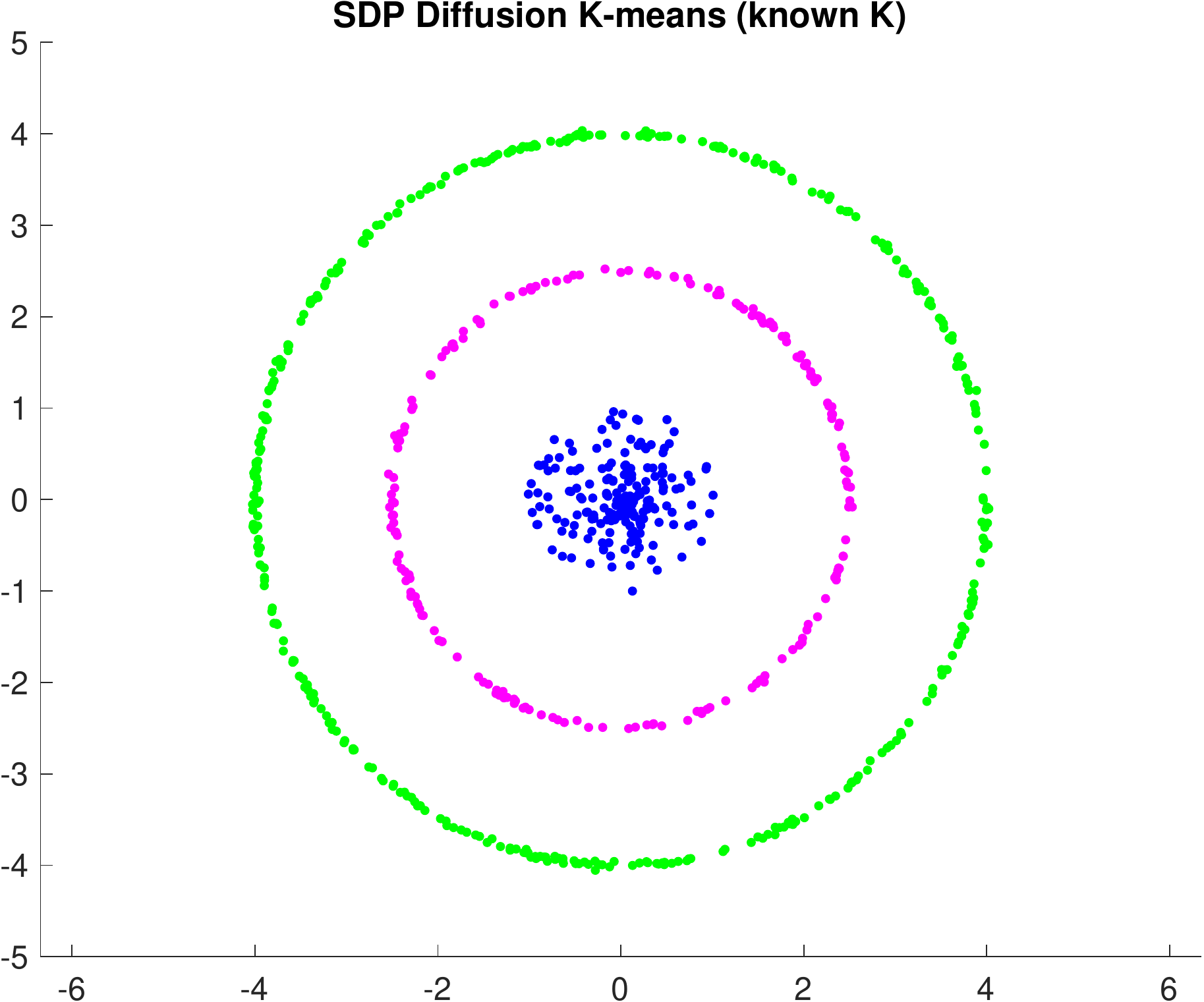}
    \caption{Comparison of the $K$-means and the SDP relaxed diffusion $K$-means clustering methods on a synthetic data sampled from three clusters with one disk and two annuli.}
    \label{fig:kmeans_demo}
\end{figure}

Classical clustering methods such as $K$-means \cite{MacQueen1967_kmeans} and mixture models \cite{FraleyRaftery2002_JASA} assume that data points from each cluster are sampled in the neighborhood (with the same dimension) of a {\it centroid}, where $\cM_{k}$ contains only one point in $\bR^{p}$. Such methods are effective for partitioning data with ellipsoidal contours, which implicitly implies that the similarity (or affinity) criteria of centroid-based clustering methods target on some notions of ``compactness". In modern applications such as image processing and computer vision \cite{ShiMalik2000_IEEEPAMI,SouvenirPless2005_ICCV,ElhmifarVidal2011_NIPS}, structured data with geometric features are commonly seen as clusters without necessarily being close together and having the same dimension. Figure~\ref{fig:kmeans_demo} is an illustration for such observation on a synthetic data sampled from a noisy version of three clusters with one disk and two annuli. In this example, it is visually clear to distinguish the three clusters, however the $K$-means method fails to correctly cluster the data points. There are two main reasons for the failure of $K$-means. First, the north pole and south pole in the outer annulus have the largest Euclidean distance among all data points, even though they belong to the same cluster. Second, the annuli and the disk live in different dimensions. In particular, the annulus is a one-dimensional circle in $\bR^{2}$ that is locally isometric to the real line and the disk has dimension two. Thus these geometric concerns motivate us to seek a more natural and flexible notion of closeness for clustering analysis. In this paper, we shall focus on the clustering criterion based on the {\it connectedness}, which is suitable for simultaneously addressing the two issues. First, connectedness is a graph property that does not rely on the physical distance: two vertices are connected if there is a path joining them. This extends the closeness from the local neighborhood to the global sense. Second, connectivity is a viable notion for clustering components of mixed dimensions, as long as all clusters live in the same ambient space where the graph connectivity weights can be computed. 

In the population version, a clustering component can be viewed as a smooth submanifold, embedded in $\bR^{p}$. In Riemannian geometry, a Riemannian submanifold $\cM$ in $\bR^{p}$ is said to be {\it connected} if for any $x, y \in \cM$, there is a parameterized regular curve joining $x$ and $y$. Thus an appealing notion of $\cM$ for being a cluster is that $\cM$ is a compact and connected component in $\bR^{p}$. In our setting, a clustering model is the union of $K$ disjoint submanifolds $\cM_{1},\dots,\cM_{K}$, and each $\cM_{k}$ is equipped with a probability distribution $\mu_{k}$. Thus for the underlying true clustering model, there are $K$ connected graphs that do not overlap. In the sample version, data points are randomly generated from $\{(\cM_{k}, \mu_{k})\}_{i=1}^{k}$ with a clustering structure $\{G_{k}^{*}\}_{k=1}^{K}$. Typically, weighted graphs computed from the observed data points are fully connected (e.g., based on the Gaussian kernel). Thus a fundamental challenge of clustering analysis is to recover the true clustering structure from a noisy and fully connected weighted graph on the data. 

\subsection{Our contributions}

In this paper, we propose a new clustering method, termed as the {\it diffusion $K$-means}, for manifold clustering. The diffusion $K$-means contains two key ingredients. First, it constructs a random walk (i.e., a Markov chain) on the weighted random graph with data points $X_{1}^{n}$ as vertices and edge weights computed from a kernel representing the similarity of data in a local neighborhood. By running the Markov chain forward in time, the local geometry (specified by the kernel bandwidth) will be integrated at multiple time scales to reveal global (topological) structures such as the connectedness properties of the graph. In the limiting case as the sample size tends to infinity and the bandwidth tends to zero, the random walk becomes a diffusion process over the manifold.
By looking at the spectral decomposition of this limiting diffusion process, the evaluations of the eigenfunctions at vertices $X_{1}^{n}$ can be viewed as a continuous embedding of the data, called {\it diffusion map}, into a higher-dimensional Euclidean space. Second, once the diffusion map is obtained, we can compute the diffusion distance \cite{coifman2006diffusion} and the $K$-means algorithm (with the Euclidean metric) can be naturally extended with the diffusion affinity as the similarity measure. Since the diffusion distance/affinity captures the connectedness among vertices on the weighted random graph, the diffusion $K$-means aims to maximize the within-cluster connectedness, which can be recast as an {\it assignment} problem via a 0-1 integer program. 

Because 0-1 integer programming problems with a non-linear objective function is generally $\mathsf{NP}$-hard, solving the diffusion $K$-means is computationally intractable, i.e., polynomial-time algorithms with exact solutions only exists in special cases. This motivates us to consider semidefinite programming (SDP) relaxations. We propose two versions of SDP relaxations of the diffusion $K$-means. The first one requires the knowledge of the number of clusters, and it can be viewed as an extension from Peng and Wei's SDP relaxation \cite{PengWei2007_SIAMJOPTIM} for the $K$-means (as well as Chen and Yang's SDP relaxation \cite{ChenYang2018} for the generalized $K$-means for non-Euclidean data in an inner product space) to the manifold clustering setting with diffusion distances. Figure~\ref{fig:kmeans_demo} (on the right) shows that the SDP relaxed diffusion $K$-means can correctly identify the three clusters in the previous example. The second SDP relaxation does not require the number of clusters as an input. The idea is to drop the constraint on the trace of the clustering membership matrix (which involves number of clusters $K$), and to add a penalization term on the diffusion $K$-means objective function. Thus it can be seen as a nuclear norm {\it regularized} version of the SDP for diffusion $K$-means that is adaptive to the number of clusters. For both SDP relaxations of the diffusion $K$-means, we show that exact recovery can be achieved when the underlying submanifolds are well separated and subsamples within each submanifold are well connected. 

Since the diffusion $K$-means and its regularized version have only one (non-adaptive) bandwidth parameter to control the local geometry, they may fail for clustering problems with unbalanced sizes, mixed dimensions, and different densities. In such situations, a random walk on the vertices sampled from regions of low density mixes slower than that from regions of high density. This motivates us to consider a variant of diffusion $K$-means, termed as the {\it localized diffusion $K$-means}, by using data-dependent local bandwidth. We adopt the self-tuning procedure from \cite{zelnik2005self} where local adaptive bandwidth is estimated from the nearest neighbors and we show that the localized diffusion $K$-means is adaptive to the local geometry and the local sampling density for the purpose of exact recovery of the true clustering structure. 

\smallskip

To summarize, our contributions are listed as below. 

\begin{enumerate}
\item We introduce the diffusion $K$-means clustering method for manifold clustering, which integrates the nonlinear embedding via the diffusion maps and the $K$-means clustering. 

\item We propose two versions of the SDP relaxations of the diffusion $K$-means: one requires to know the number of clusters, and the other one does not require such knowledge as an input (and thus it is adaptive to the unknown number of clusters). 

\item We derive the exact recovery property of the SDP relaxed diffusion $K$-means in terms of two hardness parameters of the clustering problem: one reflects the separation of the submanifolds, and the other one quantifies the degree of connectedness of the submanifolds. 

\item We combine the local scaling procedures with the diffusion $K$-means and its regularized version, and derive their adaptivity when the clustering problems have unbalanced sizes, mixed dimensions, and different densities. 
\end{enumerate}

\subsection{Related work}

There is a large collection of clustering methods and algorithms in literature, which can be broadly classified into two categories: hierarchical clustering and partition-based clustering. Hierarchical clustering recursively divides data points into groups in either a top-down or bottom-up way. Such algorithms are greedy and they often get stuck into local optimal solutions. 

Partition-based clustering methods such as $K$-means clustering \cite{MacQueen1967_kmeans} and spectral clustering \cite{vanLuxburg2007_spectralclustering} directly assign each data point with a group membership. Perhaps one of the most widely used clustering methods is the $K$-means method, due to the existence of algorithms with linear sample complexity (such as Lloyd's algorithm \cite{Lloyd1982_TIT}). However, the $K$-means clustering converges locally to a stationary point that depends on the initial partition. Recent theoretical studies in \cite{LuZhou2016} show that, given a proper initialization (such as spectral clustering), Lloyd's algorithm for optimizing the $K$-means objective function can consistently recover the clustering structures. Exact and approximate recovery of various convex relaxations for the $K$-means and mixture models are studied in literature \cite{PengWei2007_SIAMJOPTIM,LiLiLingStohmerWei2017,FeiChen2018,Royer2017_NIPS,BuneaGiraudRoyerVerzelen2016}. To the best of our knowledge, existing theoretical guarantees developed for the convex relaxed $K$-means clustering assumes that the clusters are sampled in a neighborhood of a centroid. 
Thus results derived for $K$-means in literature cannot be directly compared with our results. 

On the other hand, spectral clustering methods \cite{ShiMalik2000_IEEEPAMI,NgJordanWeiss2001_NIPS} take the similarity matrix as the input and solve the clustering problem by applying $K$-means to top eigenvectors of the graph Laplacian matrix or its normalized versions \cite{Chung1996_SpectralGraphTheory}. In essence, spectral clustering contains two steps: (i) the Laplacian eigenmaps embed data into feature spaces, and (ii) $K$-means on top eigenvectors serves as a rounding procedure to obtain the true clustering structure~\cite{vanLuxburg2007_spectralclustering}. Conventional intuition for spectral clustering is that the embedding step (i) often ``magnifies" the cluster structure from the dataset to the feature space such that it can be revealed by a relatively simple algorithm (such as $K$-means) in step (ii). However, theoretical guarantees (such as exact recovery) for the spectral clustering is rather vague in literature, partially due to its two-step nature which complicates its theoretical analysis. For instance, \cite{vonLuxburgBelkinBousquet2008_AoS} study the convergence of spectral properties of random graph Laplacian matrices constructed from sample points and they establish the consistency of the spectral clustering in terms of eigenvectors. However, they do not address the problem of the exact recovery property of the clustering structure. Similar results along this direction can be found in~\cite{Rosasco2010_JMLR,schiebinger2015,TrillosHoffmannHosseini2019}. \cite{LingStrohmer2019} propose similar SDP relaxations for the spectral clustering as in the present paper with the diffusion distance metric replaced with the graph Laplacian. Specifically, it is shown in~\cite{LingStrohmer2019} that those SDP relaxations can exactly recover the true clustering structure under a spectral proximity condition. Such condition is deterministic and difficult to check for general data generation models. (A particular checkable random model is the stochastic ball model~\cite{LingStrohmer2019}.) During the preparation of this work, we notice a recent work~\cite{maggioni2018learning} which proposes a similar idea of applying the diffusion distance as the similarity metric for clustering based on fast search and find of density peaks clustering (FSFDPS) \cite{rodriguez2014clustering}. To prove exact recovery, \cite{maggioni2018learning}
requires strong deterministic assumptions on the Markov transition matrix associated with the diffusion process that could be difficult to check under their stochastic clustering model.

Literature on theoretical guarantees for manifold clustering is rather scarce, with a few exceptions~\cite{Arias-Castro2011_IEEETIT,LittleMaggioniMurphy2017}. Near-optimal recovery of some emblematic clustering methods based on pairwise distances of data is derived under a condition that the {\it minimal} signal separation strength over all pairs of submanifolds is larger than a threshold. Compared with our diffusion $K$-means with local scaling, results established in \cite{Arias-Castro2011_IEEETIT} are non-adaptive to the local density and (geometric) structures of the submanifolds (cf. Theorem~\ref{thm:main_adaptive_h} and~\ref{thm:main_adaptive} ahead). \cite{LittleMaggioniMurphy2017} derive recovery guarantees for manifold clusters using a data-dependent metric called the longest-leg path distance (LLPD) that adapts to the geometry of data, where the data points are drawn from a mixture of uniform distributions on disjoint low-dimensional geometric objects.

\subsection{Notation}

For a matrix $A\in\bR^{n\times n}$ and index subsets $G,G'\subset [n]$, we use notation $A_{GG'}$ to denote the submatrix of $A$ with rows being selected by $G$ and columns by $G'$, and $\mbox{diag}(A)$ the $n$-dimensional vector composed of all diagonal entries of $A$. Let $\|A\|_{\infty} =\max_{1 \leq i,j \leq n} |A_{ij}|$ and $\|A\|_1=\sum_{i,j=1}^{n} |A_{ij}|$ denote the $\ell_\infty$ and the $\ell_1$ norm of the vectorization $\mbox{vec}(A)$ of matrix $A$. Let $\matnorm{A}_{\ast}$ and $\matnorm{A}_{\mbox{\scriptsize op}}$ denote the nuclear norm and the operator norm of matrix $A$. For two matrices $A$ and $B$ in $\bR^{n\times n}$, we use $\langle A, B\rangle = \tr(A^TB)$ to denote a matrix inner product. We shall use $c, c', c_{1}, c_{2},\dots,C,C', C_{1},C_{2},\dots$ to denote positive and finite (non-random) constants whose values may depend on the submanifolds $\{\cM_{k}\}_{k=1}^{K}$ and the probability distributions $\{\mu_{k}\}_{k=1}^{K}$ supported on $\{\cM_{k}\}_{k=1}^{K}$ and whose values may vary from place to place. 

\smallskip

The rest of the paper is organized as follows. In Section~\ref{sec:prelims}, we discuss some related background on diffusion distances and nonlinear embeddings in Euclidean spaces, as well as the Laplace-Beltrami operator for the heat diffusion process on Riemannian manifolds. In Section~\ref{sec:diffusion_Kmeans}, we introduce the diffusion $K$-means and its SDP relaxations. Regularized and localized diffusion $K$-means clustering methods are also proposed in this section. In Section~\ref{sec:main_results}, we present our main results on the exact recovery property of the SDP relaxed diffusion $K$-means. Simulation studies are presented in Section~\ref{sec:simulations}. Proofs are relegated to Section~\ref{sec:proofs}.


\section{Preliminaries}
\label{sec:prelims}

Let $S \subset \bR^{p}$ and $\mu$ be a probability distribution on $S$. Let $S_n=\{X_1,X_2,\ldots,X_n\}$ be $n$ i.i.d.~random variables in $S$ sampled from $\mu$, and $\mu_n =n^{-1} \sum_{i=1}^n \delta_{X_i}$ the empirical distribution. In Section~\ref{sec:prelims}, we discuss the Euclidean embedding via the diffusion distance in the general setting. Thus in this section, we do not assume $S$ to be a disjoint union of $K$ submanifolds in $\bR^{p}$ and the sample $S_{n}$ does not necessarily have a clustering structure. 

\subsection{Euclidean embedding and diffusion distances}
\label{subsec:Euclidean_embedding_diffusion_dist}


Let $\kappa : S \times S \to \bR$ be a positive semidefinite kernel that satisfies: 

\begin{enumeratei}
\item symmetry: $\kappa(x, y) = \kappa(y, x)$, 
\item positivity preserving: $\kappa(x, y) \geq 0$. 
\end{enumeratei}
A kernel is a similarity measure between points of $S$. A widely used example is the Gaussian kernel: 
\begin{equation}\label{eqn:gaussian_kernel}
\kappa(x, y) = \exp \left( -{ \|x-y\|^{2} \over 2 h^{2}} \right),
\end{equation}
where $h > 0$ is the bandwidth parameter that captures the local similarity of points in $S$. Given a kernel $\kappa$ with property (i) and (ii), we can define a reversible Markov chain on $S$ via the normalized graph Laplacian constructed as follows. Specifically, for any $x \in S$, let 
\[
d(x) = \int_{S} \kappa (x, y)\,  \rd \mu(y)  
\]
be the degree function of the graph on $S$. For simplicity, we assume $d(x)>0$ for all $x\in S$. Define 
\begin{equation}
\label{eqn:transition_kernel}
p(x, y) = {\kappa(x, y) \over d(x)}, 
\end{equation}
which satisfies the positivity preserving property (ii) and the conservation property 
\[
\int_{S} p(x, y) \, \rd \mu(y) = 1. 
\]
Thus $p(x, y)$ can be viewed as the one-step transition probability of a (stationary) Markov chain on $S$ from $x$ to $y$. We shall write this Markov chain (i.e., random walk) as $\cW = (S, \mu, p)$, where $p(\cdot, \cdot)$ is called the {\it transition kernel} of $\cW$. Equivalently, we can describe $\cW$ by the bounded linear operator $\mathscr P : L^{2}(\rd\mu) \to L^{2}(\rd\mu)$ defined as 
\[
\mathscr Pf(x) = \int_{S} p(x, y) f(y) \, \rd \mu(y). 
\]
Here $L^{2}(\rd\mu) := L^{2}(S, \rd\mu)$ is the class of squared integrable functions on $S$ with respect to $\mu$. In literature, $\mathscr P$ is often called the {\it diffusion operator} for the following reason. If we denote $p_{t}(x, y)$ as the $t$-step transition probability of the Markov chain $\cW$ from $x$ to $y$ in $S$, then 
\[
\mathscr P^{t}f(x) = \int_{S} p_{t}(x, y) f(y) \, \rd \mu(y),\qquad t=1,2,\ldots,
\]
form a semi-group of bounded linear operators on $L^{2}(\rd\mu)$. Let $\Pi$ be a stationary distribution of the Markov chain $\cW$ over $S$. Then $\Pi$ is absolutely continuous with respect to $\mu$, and the probability density function $\pi$ of $\Pi$ with respect to $\mu$ is given by the Radon-Nikodym derivative 
\begin{equation}
\label{eqn:stationary_dist}
\pi(x) = \frac{\rd\Pi}{\rd\mu} (x) = { d(x) \over  \int_{S} d(y)\, \rd\mu(y)}.
\end{equation}
Since $\Pi$ is the stationary measure of the Markov chain $\cW$ with transition $P$, we have 
\[
\Pi(\mathscr P^{t}f) = \Pi(f) 
\] 
for all bounded measurable functions $f$, where $\Pi(f) := \int_{S} f(x) \, \rd \Pi(x)$. Note that, since the kernel $\kappa$ is symmetric, $\cW$ is reversible and satisfies the detailed balance condition: 
\[
\pi(x)\, p(x, y) = \pi(y) \,p(y, x),\quad\forall x,y\in S.
\]

\begin{lem}[Spectral decomposition of the Markov chain $\cW$]
\label{lem:spectral_decomposition_Markov_chain}
Let 
\[
\mathscr R(x,y) = {\kappa(x,y) \over \sqrt{d(x)} \, \sqrt{d(y)}},\quad\forall x,y\in S.
\]
If 
\begin{equation}
\label{eqn:kernel_integrability_condition}
\int_{S} \int_{S} \mathscr R(x,y)^{2} \, \rd \mu(x) \, \rd \mu(y) < \infty, 
\end{equation}
then the following statements hold. 
\begin{enumerate}
\item There exists a sequence of nonnegative eigenvalues $\lambda_{0} \geq \lambda_{1} \geq \cdots \geq 0$ such that 
\[
\mathscr R(x,y) = \sum_{j=0}^{\infty} \lambda_{j} \phi_{j}(x) \phi_{j}(y), 
\]
where $\{\phi_{j}\}_{j=0}^{\infty}$ is the set of associated eigenfunctions to $\{\lambda_{j}\}_{j=0}^{\infty}$, and $\{\phi_{j}\}_{j=0}^{\infty}$ forms an orthonormal basis of $L^{2}(\rd\mu)$. In particular, $\lambda_{0}=1$ and $\phi_{0}(x) = \sqrt{\pi(x)}$.

\item The transition probability $p(x,y)$ admits the following decomposition 
\[
p(x,y) = \sum_{j=0}^{\infty} \lambda_{j} \psi_{j}(x) \widetilde\psi_{j}(y), 
\]
where $\psi_{j}(x) = \phi_{j}(x) / \sqrt{d(x)}$ and $\widetilde\psi_{j}(y) = \phi_{j}(y) \sqrt{d(y)}$. 

\item The diffusion operator $\mathscr P$ satisfies 
\[
\mathscr P\, \psi_{j} = \lambda_{j} \,\psi_{j}, \quad j = 0, 1, \dots.
\]
In addition, $\lambda_0=1$ and $\psi_0\equiv (\int_{S} d(y) \, \rd \mu(y))^{-1/2}$.
\end{enumerate}
\end{lem}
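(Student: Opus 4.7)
The plan is to reduce all three claims to the spectral theorem for compact self-adjoint operators applied to the symmetric integral operator on $L^{2}(\rd\mu)$ with kernel $\mathscr R(x,y)$. First I would verify that this operator---call it $\mathscr R$ as well---is self-adjoint, compact and positive semidefinite: symmetry of $\kappa$ gives $\mathscr R(x,y)=\mathscr R(y,x)$; the hypothesis~\eqref{eqn:kernel_integrability_condition} is exactly the Hilbert--Schmidt condition, so $\mathscr R$ is compact; and positive semidefiniteness of $\kappa$ (assumed at the start of Section~\ref{subsec:Euclidean_embedding_diffusion_dist}) transfers to $\mathscr R$ via the substitution $g=f/\sqrt{d}$, since
\begin{align*}
\langle \mathscr R f,\,f\rangle_{L^{2}(\rd\mu)}
= \iint_{S\times S} \kappa(x,y)\,g(x)\,g(y)\,\rd\mu(x)\,\rd\mu(y) \;\geq\; 0.
\end{align*}
The spectral theorem then produces an orthonormal basis $\{\phi_{j}\}_{j\geq 0}$ of $L^{2}(\rd\mu)$ with eigenvalues $\lambda_{0}\geq\lambda_{1}\geq\cdots\geq 0$, and the kernel expansion of part~(1) in $L^{2}(\rd\mu\otimes\rd\mu)$.

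To identify the top eigenpair, I would check directly that $\mathscr R\sqrt{\pi}=\sqrt{\pi}$: writing $Z=\int_{S}d(y)\,\rd\mu(y)$ so that $\sqrt{\pi(y)}=\sqrt{d(y)/Z}$ by~\eqref{eqn:stationary_dist}, one computes
\begin{align*}
\int_{S}\mathscr R(x,y)\sqrt{\pi(y)}\,\rd\mu(y)
= \frac{1}{\sqrt{Z\,d(x)}}\int_{S}\kappa(x,y)\,\rd\mu(y)
= \sqrt{d(x)/Z}
= \sqrt{\pi(x)},
\end{align*}
while $\|\sqrt{\pi}\|^{2}_{L^{2}(\rd\mu)}=\int\pi\,\rd\mu=1$. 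That $\lambda_{0}=1$ is the \emph{largest} eigenvalue follows from the Cauchy--Schwarz bound $\iint\kappa(x,y)g(x)g(y)\,\rd\mu(x)\,\rd\mu(y)\leq\iint\kappa(x,y)g(x)^{2}\,\rd\mu(x)\,\rd\mu(y)=\int d(x)g(x)^{2}\,\rd\mu(x)=\|f\|_{L^{2}(\rd\mu)}^{2}$, so $\|\mathscr R\|_{\op}\leq 1$.

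For parts~(2) and~(3) the key observation is that if $M$ denotes the multiplication operator $(Mf)(x)=\sqrt{d(x)}\,f(x)$, a direct computation using $p(x,y)=\kappa(x,y)/d(x)$ gives the similarity relation $\mathscr P=M^{-1}\mathscr R M$. Consequently each $\psi_{j}:=M^{-1}\phi_{j}=\phi_{j}/\sqrt{d}$ is an eigenfunction of $\mathscr P$ at eigenvalue $\lambda_{j}$, which yields part~(3), with $\psi_{0}=\sqrt{\pi/d}\equiv 1/\sqrt{Z}$. For part~(2), I would substitute the expansion of part~(1) into the pointwise identity $p(x,y)=\sqrt{d(y)/d(x)}\,\mathscr R(x,y)$ and regroup factors to obtain
\begin{align*}
p(x,y)=\sum_{j=0}^{\infty}\lambda_{j}\,\frac{\phi_{j}(x)}{\sqrt{d(x)}}\,\phi_{j}(y)\sqrt{d(y)}=\sum_{j=0}^{\infty}\lambda_{j}\,\psi_{j}(x)\,\widetilde\psi_{j}(y).
\end{align*}

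The main obstacle will be the mode of convergence of the kernel series: the Hilbert--Schmidt spectral theorem only delivers convergence in $L^{2}(\rd\mu\otimes\rd\mu)$, whereas the statement reads as a pointwise identity. Upgrading to pointwise (or uniform) convergence requires a Mercer-type argument, which applies in the motivating setting of a continuous positive kernel (such as the Gaussian kernel~\eqref{eqn:gaussian_kernel}) on the compact set $S=\bigsqcup_{k}\cM_{k}$ with $d(x)$ bounded away from zero---both conditions being already in force in the paper. Everything else is routine bookkeeping once the spectral decomposition of $\mathscr R$ is available.
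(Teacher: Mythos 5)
Your proposal is correct and follows essentially the same route as the paper's proof: apply the spectral theorem to the symmetrized Hilbert--Schmidt kernel $\mathscr R$, verify directly that $\sqrt{\pi}$ is a unit eigenfunction at eigenvalue $1$, and transfer the decomposition to $p$ and $\mathscr P$ via the conjugation by $\sqrt{d}$. You in fact supply two details the paper leaves implicit --- the Cauchy--Schwarz argument that $\|\mathscr R\|_{\op}\leq 1$ (so that $\lambda_0=1$ is indeed the top eigenvalue) and the Mercer-type caveat about upgrading the $L^{2}(\rd\mu\otimes\rd\mu)$ expansion to a pointwise identity --- both of which are welcome.
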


The proof of Lemma~\ref{lem:spectral_decomposition_Markov_chain} is given in Appendix~\ref{app:A}, and our argument is similar to Lemma 12.2 in \cite{levin2017markov} in the finite-dimensional setting. If $\cW$ is irreducible (i.e., the graph on $S$ is connected in that for all $x,y \in S$, there is some $t>0$ such that $p_{t}(x,y) > 0$), then the stationary distribution $\pi$ is unique. Thus if we run this Markov chain $\cW$ forward in time, then the local geometry (captured by the kernel $\kappa$ which is parameterized by the bandwidth $h$) will be integrated to reveal global structures of $S$ at multiple (time) scales. In particular, we can define a class $\{\mathscr D_{t}\}_{t \in \bN_{+}}$ of {\it diffusion distances} \cite{coifman2006diffusion} on $S$ by 
\[
\mathscr D_{t}(x, y) := \|\, p_{t}(x, \cdot) - p_{t}(y, \cdot)\,\|_{L^{2}(\rd\mu/d)} = \left\{ \int_{S} [\, p_{t}(x, z) - p_{t}(y, z)]^{2}\,  {\rd \mu(z) \over d(z)} \right\}^{1/2}. 
\]
Roughly speaking, for each $t\in \bN_{+}$ and $x,y\in S$, the diffusion distance $\mathscr D_{t}(x,y)$ quantifies the total number of paths with length $t$ connecting $x$ and $y$ (see Figure~\ref{fig:diffussion_dist}), thereby reflecting the local connectivity at the time scale $t$. 

\begin{figure}[t!] 
    \centering
    \includegraphics[scale=0.45]{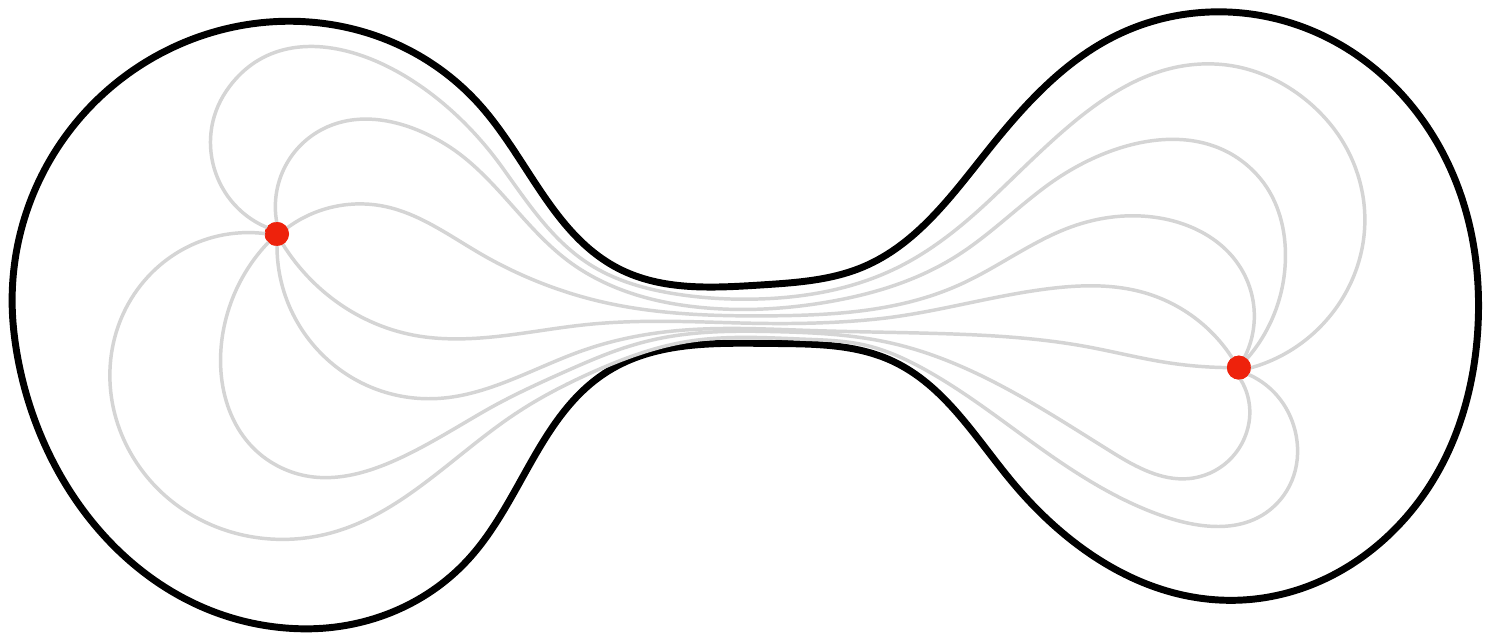}
    \includegraphics[scale=0.45]{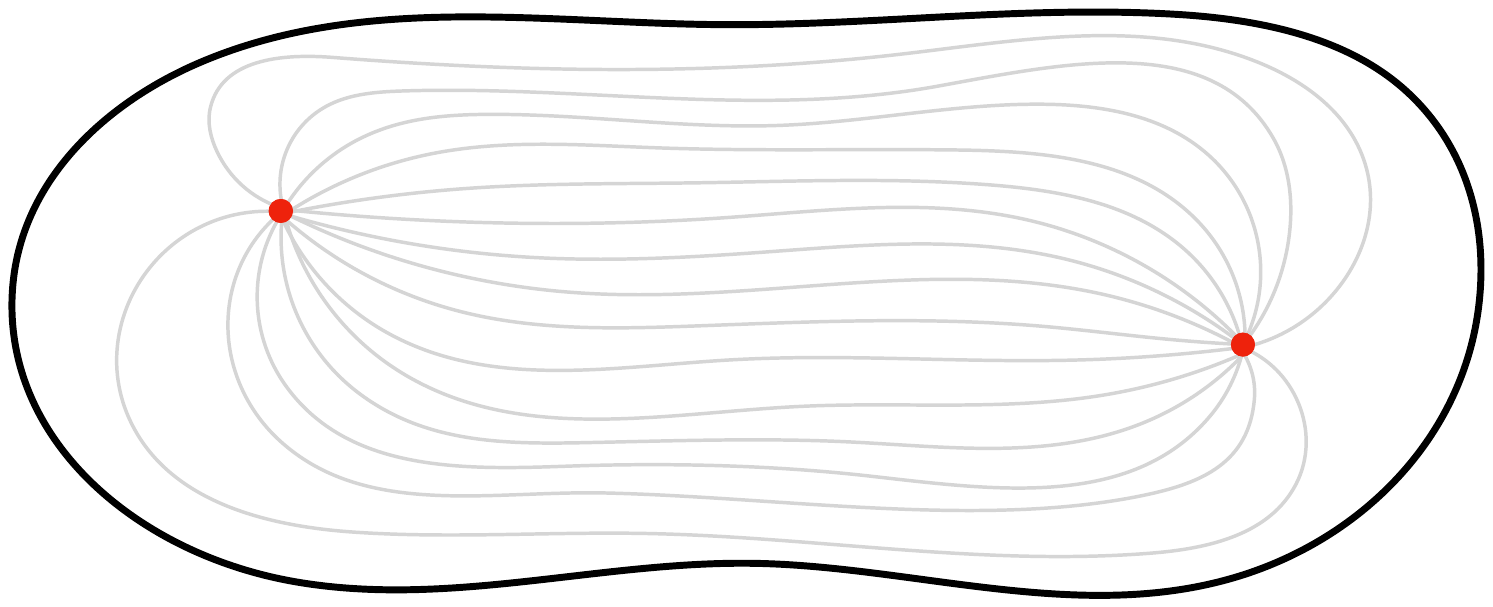}
    \caption{Illustration of the diffusion distance between two red dots as the total number of paths connecting them. The region on the left panel (the Cheeger dumbbell) is ``less" connected than the region on the right as there are fewer paths in the former due to the narrow bottleneck in the middle. In particular, the second smallest eigenvalue associated with the Laplace-Beltrami operator (or the Cheeger isoperimetric constant) of the left region is smaller that of the right.}
    \label{fig:diffussion_dist}
\end{figure}

\begin{lem}[Spectral representation of diffusion distances]
\label{lem:spectral_representation_diffusion_distances}
If the Markov chain $\cW = (S, \mu, p)$ is irreducible, then we have 
\[
\mathscr D_{t}^2(x, y) = \sum_{j=0}^{\infty} \lambda_{j}^{2t} \, [\psi_{j}(x) - \psi_{j}(y)]^{2} 
\]
for all $t\in\bN_{+}$ and $x,y\in S$. 
\end{lem}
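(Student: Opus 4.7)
The plan is to obtain a spectral expansion for the $t$-step transition kernel $p_{t}(x,y)$ and then plug it into the definition of $\mathscr D_{t}^{2}(x,y)$, exploiting orthogonality of the eigenfunctions $\{\widetilde\psi_{j}\}$ with respect to the reference measure $\rd\mu/d$.

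First I would upgrade part (2) of Lemma~\ref{lem:spectral_decomposition_Markov_chain} from one step to $t$ steps. By part (3) one has $\mathscr P \psi_{j} = \lambda_{j}\psi_{j}$, so iterating gives $\mathscr P^{t}\psi_{j} = \lambda_{j}^{t}\psi_{j}$. Using Chapman–Kolmogorov together with this identity (and the biorthogonality $\int_{S}\widetilde\psi_{j}(z)\psi_{k}(z)\,\rd\mu(z) = \int_{S}\phi_{j}(z)\phi_{k}(z)\,\rd\mu(z)=\delta_{jk}$, inherited from the orthonormality of $\{\phi_{j}\}$ in $L^{2}(\rd\mu)$), the natural candidate expansion is
\[
p_{t}(x,y) \;=\; \sum_{j=0}^{\infty} \lambda_{j}^{t}\, \psi_{j}(x)\, \widetilde\psi_{j}(y).
\]
To justify this rigorously, I would apply $\mathscr P^{t-1}$ (in the $y$ variable) to the one-step expansion in part (2) of Lemma~\ref{lem:spectral_decomposition_Markov_chain} and use $\mathscr P \psi_{j} = \lambda_{j}\psi_{j}$, with $L^{2}(\rd\mu/d)$-convergence guaranteed by the integrability condition \eqref{eqn:kernel_integrability_condition} (which gives $\sum_{j}\lambda_{j}^{2}<\infty$).

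Next I would substitute into the definition of the diffusion distance. Since $p_{t}(x,\cdot) - p_{t}(y,\cdot) = \sum_{j=0}^{\infty}\lambda_{j}^{t}\bigl[\psi_{j}(x)-\psi_{j}(y)\bigr]\widetilde\psi_{j}(\cdot)$ and $\{\widetilde\psi_{j}\}$ is orthonormal in $L^{2}(\rd\mu/d)$ (a short check:
\[
\int_{S} \widetilde\psi_{j}(z)\widetilde\psi_{k}(z)\,\frac{\rd\mu(z)}{d(z)} \;=\; \int_{S} \phi_{j}(z)\phi_{k}(z)\,\rd\mu(z) \;=\; \delta_{jk}),
\]
Parseval's identity yields
\[
\mathscr D_{t}^{2}(x,y) \;=\; \bigl\| p_{t}(x,\cdot)-p_{t}(y,\cdot)\bigr\|_{L^{2}(\rd\mu/d)}^{2} \;=\; \sum_{j=0}^{\infty}\lambda_{j}^{2t}\bigl[\psi_{j}(x)-\psi_{j}(y)\bigr]^{2},
\]
which is the claimed representation. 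Note that the $j=0$ term vanishes because $\psi_{0}$ is constant by part (3) of Lemma~\ref{lem:spectral_decomposition_Markov_chain}, so writing the sum from $j=0$ or $j=1$ is equivalent.

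The only delicate point is the convergence bookkeeping: one has to check that $p_{t}(x,\cdot)\in L^{2}(\rd\mu/d)$ for (a.e.) $x$ and that the spectral series converges in that space, so that Parseval applies. I would handle this by noting that irreducibility together with \eqref{eqn:kernel_integrability_condition} forces $\sum_{j}\lambda_{j}^{2t}<\infty$ for $t\geq 1$, and then using boundedness of the operator $\mathscr P$ on $L^{2}(\rd\mu/d)$ (reversibility makes $\mathscr P$ self-adjoint on this space) to propagate the $L^{2}$ expansion from $t=1$ to general $t$. This convergence step is the main---though modest---obstacle; everything else is algebraic manipulation of the spectral decomposition already supplied by Lemma~\ref{lem:spectral_decomposition_Markov_chain}.
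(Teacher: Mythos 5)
Your proposal is correct and follows essentially the same route as the paper: expand $p_{t}(x,\cdot)$ in the orthonormal basis $\{\widetilde\psi_{j}\}$ of $L^{2}(\rd\mu/d)$ with coefficients $\lambda_{j}^{t}\psi_{j}(x)$ and apply Parseval. Your explicit iteration of the one-step expansion to $t$ steps and the convergence bookkeeping are details the paper leaves implicit, but the argument is the same.
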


The proof of Lemma~\ref{lem:spectral_representation_diffusion_distances} is given in Appendix~\ref{app:A}. For an irreducible Markov chain, the spectral gap is strictly positive (i.e., $|\lambda_{j}| < 1$ for all $j > 0$). Based on the spectral decomposition in Lemma \ref{lem:spectral_representation_diffusion_distances} and noting that $\psi_{0} \equiv (\int_{S} d(y) \, \rd \mu(y))^{-1/2}$ is constant, we see that the diffusion distance can be written as 
\begin{align}\label{Eqn:Spectral_rep}
\mathscr D_{t}(x, y) = \left\{ \sum_{j=1}^{\infty} \lambda_{j}^{2t} \, [\psi_{j}(x) - \psi_{j}(y)]^{2} \right\}^{1/2}.
\end{align}
In this case, the diffusion distance $\mathscr D_{t}(x, y)$ decays to zero as $t$ increases, provided that $x$ and $y$ belong to a connected component of the graph on $S$. In particular, the decay rate of the spectrum quantifies the connectivity of points in the graph on $S$. Given a positive integer $q \in \bN_{+}$, the diffusion maps $\{\Psi_{t}\}_{t \in \bN}$ are defined as 
\[
\Psi_{t}^{(q)}(x) = (\lambda_{0}^{t} \psi_{0}(x), \lambda_{1}^{t} \psi_{1}(x), \dots, \lambda_{q}^{t} \psi_{q}(x))^{T}, 
\]
where the $\ell$-th component $\Psi_{t\ell}^{(q)}(x)$ is the $\ell$-th diffusion coordinate in $\bR^{q}$. Thus we obtain an embedding of $(S, \mu)$ into the Euclidean space $\bR^{q}$ in the limiting sense that 
\[
\mathscr D_{t}(x, y) = \lim_{q \to \infty} \|\Psi_{t}^{(q)}(x) - \Psi_{t}^{(q)}(y)\|_{2}. 
\]

\subsection{Empirical diffusion embedding}
\label{subsec:empirical_diffusion_embedding}

Recall that $S_n=\{X_1,X_2,\ldots,X_n\}$ are $n$ i.i.d.~random variables in $S$ sampled from $\mu$, and $\mu_n =n^{-1} \sum_{i=1}^n \delta_{X_i}$ is the empirical distribution. Given $S_n$, we can consider finite sample approximations $\{\mathscr D_{n,t}\}_{t\in\bN_{+}}$ to the underlying population level quantities $\{\mathscr D_{t}\}_{t\in\bN_{+}}$. More precisely, consider a weighted graph with nodes corresponding to the elements in $S_n$, where the weight between a pair $(X_i,X_j)$ of nodes is $\kappa(X_i,X_j)$, for $i,j\in [n]$.
Define the (rescaled) empirical degree function $d_n:\, S_n\to \bR_{+}$ by
\begin{align*}
d_n(x) = n\,\int_{S_n} \kappa(x, y)\, \rd \mu_n (y) =\sum_{i=1}^n \kappa(x, X_i),\quad \forall x\in S_n,
\end{align*}
where we added an extra $n$-factor so that $d_n(X_i)$ is also the degree of node $X_i$ in the weighted graph. Let $D_n$ denote the $n$-by-$n$ diagonal matrix whose $i$-th diagonal entry is $d_n(X_i)$.
Consider the (empirical) random walk $\cW_n=(S_n, \mu_n, P_n)$ over $S_n$ with transition probability 
\begin{align*}
 P_n (x, y) = \frac{\kappa(x,y)}{d_n(x)}\quad\forall x,y\in S_n.
\end{align*}
The (empirical) stationary distribution $\pi_n$ of the random walk $\cW_n$ over $S_n$ becomes
\begin{align*}
\pi_n(x) = \frac{d_n(x)}{\sum_{i=1}^{n} d_n(X_i)} \quad \forall x\in S_n.
\end{align*}
For any vector $v\in\bR^n_{+}$, let $L^2(v)=\{u=(u_1,\ldots,u_n)\in\bR^n:\, \|u\|_{L^2(v)} = \sum_{i=1}^n v_i\, u_i^2\}$ denote a weighted $L^2$ space over $S_n$. 
We define the {\it empirical diffusion distances} $\{\mathscr D_{n,t}\}_{t\in \bN_{+}}$ as
\begin{align*}
\mathscr D_{n,t}(x,y) =\|P^t_n(x,\cdot) - P^t_n(y,\cdot)\|_{L^2(\mbox{diag}(D_n^{-1}))} = \left\{ \sum_{i=1}^n [\, P_{n}^t(x, X_i) - P^t_n(y, X_i)]^{2}\,  {1 \over d_n(X_i)} \right\}^{1/2},
\end{align*}
for all $x,y\in S_n$ and $t\in\bN_{+}$. Roughly speaking, $\mathscr D_{n,t}$ provides an empirical estimate to $\mathscr D_t$.
Similar to the spectral representation~\eqref{Eqn:Spectral_rep} for $\mathscr D_t$, we also have the following spectral representation of $\mathscr D_{n,t}$ (see Appendix~\ref{app:B}),
\begin{align*}
\mathscr D_{n,t}(x, y)& = \left\{ \sum_{j=0}^{n-1} \lambda_{n, j}^{2t} \, [\psi_{n,j}(x) - \psi_{n,j}(y)]^{2} \right\}^{1/2},
\quad\forall t\in\bN_{+} \mbox{ and }x,y\in S_n,
\end{align*}
where $1=\lambda_{n,0}\geq \lambda_{n,1} \geq\cdots \geq \lambda_{n,n-1} \geq 0$ are the nonnegative eigenvalues (due to the positive semidefiniteness of the kernel $\kappa$) of the transition probability operator $P_n$, which can be identified with a matrix in $\bR^{n\times n}$ with $[P_{n}]_{ij} = P_n(X_i,X_j)$ as its $(i,j)$-th element, and $\psi_{n,0},\psi_{n,1},\ldots,\psi_{n,n-1}:\,S_n \to\bR$ are the associated eigen-functions on $S_n$ with unit $L^2(\mbox{diag}(D_n))$ norm, which can be identified with vectors in $\bR^n$ with $[\psi_{n,j}]_i = \psi_{n,j}(X_i)$ as the $i$-th element of $\psi_{n,j}$ for $i\in [n]$ and $j =0,1\ldots,n-1$. The empirical diffusion distance $\mathscr D_{n,t}(X_i, X_j)$ between two nodes $X_i$ and $X_j$ is also the Euclidean distance between their embeddings $\Psi_{n,t}(X_i)$ and $\Psi_{n,t}(X_j)$ via the empirical diffusion map
\begin{align*}
\Psi_{n,t}:\, S_n \to \bR^{n},\quad x\mapsto \big(\lambda_{n,0}^t \psi_{n,0}(x),\lambda_{n,1}^t \psi_{n,1}(x),\ldots,\lambda_{n,n-1}^t\psi_{n,n-1}(x)\big)^T.
\end{align*} 

\subsection{The Laplace-Beltrami operator on Riemannian manifolds}
\label{subsec:Laplace-Beltrami_operator}

The Laplace-Beltrami operator on Riemannian manifolds is a generalization of the Laplace operator on Euclidean spaces. 
Let $f : \cM \to \bR$ be an (infinitely) differentiable function with continuous derivatives on a $q$-dimensional compact and smooth Riemannian manifold and $\nabla_{\cM} f$ be the gradient vector field on $\cM$ (i.e., $\nabla_{\cM} f(x)$ is the deepest direction of ascent for $f$ at the point $x \in \cM$). The Laplace-Beltrami operator $\Delta_{\cM}$ is defined as the divergence of the gradient vector 
\[
\Delta_{\cM} f = -\mbox{div}(\nabla_{\cM} f), 
\]
where the $\mbox{div}$ operator is relative to the volume form $\mbox{Vol}_{\cM}$ of $\cM$. Here we adopt the convention with the minus sign of the divergence such that $\Delta_{\cM}$ is a positive-definite operator. With integration-by-parts, we have for any two differentiable functions $f$ and $g$, 
\[
\int_{\cM} g(x) \Delta_{\cM} f \,\rd \mbox{Vol}_{\cM}(x) = \int_{\cM} \langle \nabla_{\cM} g(x), \nabla_{\cM} f(x) \rangle \,\rd \mbox{Vol}_{\cM}(x), 
\]
where the inner product is taken in the $q$-dimensional tangent space of $\cM$ (at the point $x$). In a Euclidean space (i.e., $\cM = \bR^{q}$), the Laplace-Beltrami operator is the usual Laplace operator 
\[
\Delta f = -\sum_{j=1}^{q}{\partial^{2}f \over \partial x_{j}^{2}}.
\]
On a general $q$-dimensional Riemannian manifold $\cM$, the Laplace-Beltrami operator in a local coordinate system $(e^{1},\dots,e^{q})$ with a metric tensor $\vG = (g_{ij})_{i,j=1}^{q}$ is given by 
\[
\Delta_{\cM} f = -{1 \over \sqrt{\det(\vG)}} \sum_{j=1}^{q} {\partial \over \partial e^{j}} \left( \sqrt{\det(\vG)} \sum_{i=1}^{q} g^{ij} {\partial f \over \partial e^{i}} \right), 
\]
where $g^{ij}$ are the entries of $\vG^{-1}$. In the special case $\cM = \bR^{q}$, $\vG$ is the $q \times q$ identity matrix.

More generally, if there is a distribution with a positive and Lipschitz continuous density $\rho > 0$ on $\cM$, then the drift Laplace-Beltrami operator is defined as
\begin{equation}
\label{eqn:drift_Laplace-Beltrami}
\Delta_{\cM} f := \Delta_{\cM, \rho} f = -{1 \over \rho^{2}} \mbox{div}(\rho^{2} \nabla f) = \Delta_{\cM} f - 2 \langle \nabla \log\rho, \nabla f \rangle.
\end{equation}
Note that $\Delta_{\cM}$ is a self-adjoint positive-definite compact operator, its spectrum contains a sequence of nonnegative eigenvalues $0 \leq \lambda_{0} \leq \lambda_{1} \leq \cdots$. If in addition $\cM$ is connected, then the second smallest eigenvalue $\lambda_{1} > 0$. As we will show, $\lambda_{1}$ depends on the connectivity of the manifold (Figure~\ref{fig:diffussion_dist}), thus characterizing the limiting mixing time of the empirical random walk $\cW_n$ over the $S_n$ as $n\to \infty$ and $h\to 0^{+}$, when $S_n$ is sampled from the manifold $\cM$.

\section{Diffusion $K$-means}
\label{sec:diffusion_Kmeans}

Recall that in our clustering model, $S_n=\{X_{1},X_2,\ldots,X_n\}$ is a sample of independent random variables taking values in $S$, where $S$ is the union of $K$ disjoint Riemannian submanifolds $\cM_{1},\dots,\cM_{K}$ embedded in the ambient space $\bR^p$. The clustering problem is to divide these $n$ data points into $K$ clusters, so that points in the same cluster belongs to the same connected component in $S$, based on certain similarity measures between the points. In particular, the (classical) $K$-means clustering method minimizes the total intra-cluster squared Euclidean distances in $\bR^p$
\[
\min_{G_{1},\dots,G_{K}} \sum_{k=1}^{K} {1 \over |G_{k}|} \sum_{i,j \in G_{k}} \|X_{i}-X_{j}\|^{2}
\]
over all possible partitions on $[n]$, where $|G_{k}|$ is the cardinality of $G_{k}$. Dropping the sum of squared norms $\sum_{i=1}^{n} \|X_{i}\|^{2}$, we see that the $K$-means clustering is equivalent to the maximization of the total within-cluster covariances
\[
\max_{G_{1},\dots,G_{K}} \sum_{k=1}^{K} {1 \over |G_{k}|} \sum_{i,j \in G_{k}} a_{ij}, \quad\mbox{with }a_{ij} = X_{i}^{T} X_{j}.
\]
Here, $a_{ij}=X_{i}^{T} X_{j}$ can be viewed as a similarity measure specified by the Euclidean space inner product $\langle X_{i}, X_{j}\rangle_{\bR^p}$. In general, we can replace the Euclidean inner product with any other inner product over $S_n$ \cite{ChenYang2018}. For manifold clustering, we replace it with the inner product induced from the empirical diffusion distance, that is,
\begin{align*}
\langle x,\, y\rangle_{\mathscr D_{n,t}} = \langle \Psi_{n,t}(x), \,\Psi_{n,t}(y) \rangle_{\bR^{n}}
=\sum_{j=0}^{n-1} \lambda_{n,j}^{2t} \,\psi_{n,j}(x) \, \psi_{n,j}(y),\quad \forall x,y\in S_n.
\end{align*}
Henceforth, we will refer to $\langle \cdot,\, \cdot\rangle_{\mathscr D_{n,t}}$ as the {\it diffusion affinity}. 
Interestingly, we can obtain this diffusion affinity value without explicitly conducting eigen-decomposition (spectral decomposition) to the transition probability matrix $P_n=D_n^{-1} \mathcal K_n$ (or the symmetrized matrix $D_n^{-1/2} \mathcal K_n D_n^{-1/2}$), where recall that $D_n=\mbox{diag}\big(d_n(X_1),\ldots,d_n(X_n)\big)\in\bR^n$ is the degree diagonal matrix, and $\mathcal K_n=\big[\kappa(X_i,X_j)\big]_{n\times n}\in\bR^{n\times n}$ is the empirical kernel matrix. In fact, we may use the following relation that links the empirical diffusion affinity with entries in matrix $P_n$ raising to power $2t$ (see Appendix~\ref{app:B} for details),
\begin{align*}
\langle x,\, y\rangle_{\mathscr D_{n,t}} = \sum_{j=0}^{n-1} \lambda_{n,j}^{2t} \,\psi_{n,j}(x) \, \psi_{n,j}(y)=[P_n^{2t}D_n^{-1}](x,y).
\end{align*}
This motivates a $K$-means clustering method via diffusion distances, referred to as the \emph{diffusion $K$-means} as
\begin{align}\label{Eqn:Diffussion_K_Means}
\max_{G_{1},\dots,G_{K}} \sum_{k=1}^{K} {1 \over |G_{k}|} \sum_{i,j \in G_{k}} [P_n^{2t}D_n^{-1}]_{ij},
\end{align}
for the tuning parameter $t$ interpreted as the number of steps in the empirical random walk $\cW_n$.
Note that here the affinity matrix $P_n^{2t}D_n^{-1}= D_n^{-1/2} (D_n^{-1/2} \mathcal K_n D_n^{-1/2})^{2t} D_n^{-1/2} \in\bR^{n\times n}$ is symmetric. In light of the connections between the diffusion distance and the random walk $\cW_n$ over $S_n$ in Section~\ref{subsec:empirical_diffusion_embedding}, the diffusion $K$-means attempts to maximize the total within-cluster connectedness. 

\begin{rem}[Intuition of diffusion $K$-means]
\label{rem:intution_DKM}
In Section~\ref{subsec:Euclidean_embedding_diffusion_dist}, we see that, on a connected submanifold $\cM_{k}$, the (population) diffusion process~\eqref{eqn:transition_kernel} converges to the stationary distribution~\eqref{eqn:stationary_dist}: 
\[
p_{t}(x,y) \to \pi(y) = {\int_{\cM_{k}} \kappa(x,y) \,  \rd \mu(x) \over \iint_{\cM_{k} \times \cM_{k}} \kappa(x,z) \,  \rd \mu(x)\,  \rd \mu(z)} \quad \text{as } t \to \infty.
\]
In fact, since the kernel $\kappa$ is positive semidefinite, this convergence holds at a geometric rate governed by the spectral gap of the Laplace-Beltrami operator on $\cM_{k}$ (cf.~\eqref{eqn:mixing_T2} in the proof of Lemma~\ref{Lem:T_2}). Thus the empirical version of the diffusion (i.e., the Markov chain on the random graph generated by $X_{1}^{n}$ and $\kappa$) obeys 
\[
{P^{t}_{n}(X_{i}, X_{j}) \over \sum_{\ell \in G_{k}^{*}} \kappa(X_{\ell}, X_{j})} \approx {1 \over \sum_{\ell,\ell' \in G_{k}^{*}} \kappa(X_{\ell}, X_{\ell'})}
\] 
for any two data points $X_{i}, X_{j} \in \cM_{k}$. On the other hand, if the separation between the submanifolds is large enough and $t$ is not so large, then the probability to diffuse from one cluster to another one is small (cf. Lemma~\ref{lem:between_cluster_random_walk}). Thus we expect that 
\[
{P^{t}_{n}(X_{i}, X_{j}) \over \sum_{\ell \in G_{k}^{*}} \kappa(X_{\ell}, X_{j})} \approx 0
\] 
for any two data points $X_{i} \in \cM_{k}$ and $X_{j} \in \cM_{m}$ such that $k \neq m$. This means that the within-cluster entries of the empirical diffusion affinity are larger than the between-cluster entries. In particular, for suitably large $t \in \bN_{+}$, the empirical diffusion affinity matrix $A_{n} := A_{n,t} = P_{n}^{2t} D_{n}^{-1}$ tends to become close to a block-diagonal matrix 
\begin{align}\label{Eqn:approx_form_A_n}
A_n \approx \begin{pmatrix} 
\displaystyle \frac{1}{N_1} \mathbf{1}_{G_1^\ast} \mathbf{1}^T_{G_1^\ast} & 0 & \cdots & 0\\
0 & \displaystyle \frac{1}{N_2} \mathbf{1}_{G_2^\ast} \mathbf{1}^T_{G_2^\ast} & \cdots & 0\\
\vdots & \vdots & \ddots & \vdots \\
0 & \cdots & 0 & \displaystyle \frac{1}{N_K} \mathbf{1}_{G_K^\ast} \mathbf{1}^T_{G_K^\ast}
\end{pmatrix},
\end{align}
where $N_{k} = \sum_{\ell,\ell' \in G_{k}^{*}} \kappa(X_{\ell}, X_{\ell'})$. Since each diagonal block of $A_{n}$ tends to be a constant matrix, if we run the Markov chain for a suitably long time, then this block-diagonal structure in the limit precisely conveys the true clustering structure in that $i, j \in G_{k}^{*}$ if and only if $\lim_{t \to \infty} [A_{n,t}]_{i,j} = N_{k}^{-1} > 0$. The trade-off regime of $t$ (cf.~\eqref{eqn:exact_recovery_condition_SDP_diffusion_Kmeans_LB_eigenval} in Theorem~\ref{thm:main}) is determined by the non-asymptotic bounds on the convergence of the empirical diffusion maps to its population version (cf. Lemma~\ref{lem:within_cluster_random_walk} and~\ref{lem:between_cluster_random_walk}), as well as the submanifolds separation. \qed
\end{rem}

Note that, for every partition $G_{1},\dots,G_{K}$, there is a one-to-one $n \times K$ {\it assignment matrix} $H = (h_{ik}) \in \{0,1\}^{n \times K}$ such that $h_{ij} = 1$ if $i \in G_{k}$ and $h_{ij} = 0$ if $i \notin G_{k}$. Thus the diffusion $K$-means clustering problem can be recast as a 0-1 integer program:
\begin{equation}
\label{eqn:kernel_Kmeans_integer_program}
\max \left\{ \langle P_n^{2t}D_n^{-1}, H B H^{T} \rangle : H \in \{0,1\}^{n \times K}, H \vone_{K} = \vone_{n} \right\},
\end{equation}
where $\vone_{n}$ denotes the $n \times 1$ vector of all ones and $B = \diag(n_{1}^{-1},\dots,n_{K}^{-1})$, where $n_{k}=|G_{k}|$ for $k=1,\ldots,K$ is the size of the $k$-th cluster. 

The diffusion $K$-means clustering problem (\ref{eqn:kernel_Kmeans_integer_program}) is often computationally intractable, namely, polynomial-time algorithms with exact solutions only exist in special cases \cite{SongSmolaGrettonBorgwardt2007_ICML}. For instances, the (classical) $K$-means clustering is an $\mathsf{NP}$-hard integer programming problem with a non-linear objective function \cite{PengWei2007_SIAMJOPTIM}. Exact and approximate recovery of various SDP relaxations for the $K$-means \cite{PengWei2007_SIAMJOPTIM,LiLiLingStohmerWei2017,FeiChen2018,Royer2017_NIPS,GiraudVerzelen2018} are studied in literature. However, it remains a challenging task to provide statistical guarantees for clustering methods that can capture non-linear features of data taking values on manifolds.

\subsection{Semidefinite programming relaxations}

We consider the SDP relaxations for the diffusion $K$-means clustering. Note that every partition $G_{1},\dots,G_{K}$ of $[n]$ can be represented by a partition function $\sigma : [n] \to [K]$ via $G_{k}=\sigma^{-1}(k), k=1,\dots,n$. If we change the variable $Z = H B H^{T}$ in the 0-1 integer program formulation (\ref{eqn:kernel_Kmeans_integer_program}) of the diffusion $K$-means, then $Z$ satisfies the following properties: 
\begin{equation}
\label{eqn:constraints_clustering_generic_integer_program}
Z^{T} = Z, \quad Z \succeq 0, \quad \tr(Z) = \sum_{k=1}^{K} n_{k} b_{kk}, \quad (Z \vone_{n})_{i} = \sum_{k=1}^{K} n_{k} b_{\sigma(i)k}, \; i=1,\dots,n.
\end{equation}
For the diffusion $K$-means $B = \diag(n_{1}^{-1},\dots,n_{K}^{-1})$, the last constraint in (\ref{eqn:constraints_clustering_generic_integer_program}) reduces to $Z \vone_{n} = \vone_{n}$, which does not depend on the partition function $\sigma$. Thus we can relax the diffusion $K$-means clustering to the SDP problem: 
\begin{equation}
\label{eqn:clustering_Kmeans_sdp}
\begin{gathered}
\hat{Z} = \argmax \left\{ \langle A, Z \rangle : Z \in \sC_K \right\} \\
\qquad \mbox{with } \sC_K = \{Z \in \bR^{n \times n} : Z^{T} = Z, Z \succeq 0, \tr(Z) = K, Z \vone_{n} = \vone_{n}, Z \geq 0 \}, 
\end{gathered}
\end{equation}
where $Z \succeq 0$ means that $Z$ is positive semidefinite and $Z \geq 0$ means that all entries of $Z$ are non-negative, and matrix $A=[a_{ij}]:=A_{n}=P_n^{2t}D_n^{-1}\in\bR^{n\times n}$. We shall use $\hat{Z}$ to estimate the true ``membership matrix" $Z^{*}$, where 
\begin{equation}
\label{eqn:Kmeans_true_membership_matrix}
Z_{ij}^{*} = \left\{
\begin{array}{cc}
1/n_{k} & \text{if } i, j \in G_{k}^{*} \\
0 & \text{otherwise} \\
\end{array}
\right. .
\end{equation}
Note that $Z^{*}$ is a block diagonal matrix (up to a permutation) of rank $K$. If $X_{1},\dots,X_{n} \in \bH$ (i.e., $\bS = \bH$) for some Hilbert space $\bH$ and $a_{ij} = \langle X_{i}, X_{j} \rangle_{\bH}$ is the inner product between $X_i$ and $X_j$, then (\ref{eqn:clustering_Kmeans_sdp}) is the SDP for kernel $K$-means proposed in \cite{ChenYang2018}. In particular, \cite{PengWei2007_SIAMJOPTIM} consider the special case for the (Euclidean) $K$-means, where $\bH = \bR^{p}$ and $a_{ij} = X_{i}^{T} X_{j}$. Observe that the SDP relaxation (\ref{eqn:clustering_Kmeans_sdp}) does not require the knowledge of the cluster sizes other than the number of clusters $K$. Thus it can handle the general case for unequal cluster sizes. 

\subsection{Regularized diffusion $K$-means}
\label{subsec:regularized_diffusion_Kmeans}

In practice, the number $K$ of clusters is rarely known. Note that the SDP problem~\eqref{eqn:clustering_Kmeans_sdp} depends on $K$ only through the constraint $\tr(Z) = K$. Therefore we propose a {\it regularized diffusion $K$-means} estimator by dropping the constraint on the trace and penalizing $\tr(Z)$ as follows,
\begin{equation}
\label{eqn:clustering_Kmeans_sdp_unknown_K}
\begin{gathered}
\tilde{Z} := \tilde{Z}_{\rho} = \argmax \left\{ \langle A, Z \rangle - n\,\rho \tr(Z) : Z \in \sC \right\} \\
\mbox{with } \sC = \{Z \in \bR^{n \times n} : Z^{T} = Z, Z \succeq 0,\, Z \vone_{n} = \vone_{n}, Z \geq 0 \},
\end{gathered}
\end{equation}
where $\rho>0$ is the regularization parameter. Recall that $\matnorm{Z}_{\ast}$ denotes the nuclear norm of a matrix $Z$ (i.e., $\matnorm{Z}_{\ast}$ is the sum of the singular values of $Z$). Since $\matnorm{Z}_{\ast} = \tr(Z)$ for $Z \in \sC$, it is interesting to note that~\eqref{eqn:clustering_Kmeans_sdp_unknown_K} is the same as the nuclear norm penalized diffusion $K$-means
\begin{equation}
\label{eqn:clustering_Kmeans_sdp_unknown_K_nuclear-norm_form}
\begin{gathered}
\tilde{Z} = \argmax \left\{ \langle A, Z \rangle - n\,\rho \matnorm{Z}_{\ast} : Z \in \sC \right\}.
\end{gathered}
\end{equation}
Recall that the true membership matrix $Z^{*}$ has a block diagonal structure with rank $K$, the nuclear norm penalized diffusion $K$-means~\eqref{eqn:clustering_Kmeans_sdp_unknown_K_nuclear-norm_form} can be thought as an $\ell^{1}$ norm convex relaxation of the $\mathsf{NP}$-hard rank minimization problem (i.e., minimizing the number of non-zero eigenvalues). Thus the parameter $K$ in the clustering problem plays a similar role as the sparsity (or low-rankness) parameter in the matrix completion context \cite{CandesRecht2009_FoCM}. Hence the SDP problem~\eqref{eqn:clustering_Kmeans_sdp_unknown_K_nuclear-norm_form} can be viewed as a (further) convex relaxation of the infeasible SDP problem~\eqref{eqn:clustering_Kmeans_sdp} when $K$ is unknown. Note that similar regularizations have been considered in~\cite{BuneaGiraudRoyerVerzelen2016} for the $G$-latent clustering models and in~\cite{YanSarkarCheng2018_AISTATS} for stochastic block models.

It remains a question to choose the value of $\rho$. Larger values of $\rho$ will lead to solutions containing less number of clusters (with larger sizes).
In particular, when matrix $A$ is positive-definite, the following Lemma~\ref{lem:feasibility_SDP_lambda_infinity} shows that the solution reduces to a rank one matrix that assigns all points into a giant cluster when $\rho$ is large enough, and becomes the identity matrix that assigns $n$ points into $n$ distinct clusters when $\rho$ is small enough. In addition, the trace $\tr(\tilde Z_\rho)$ of the solution is nonincreasing in $\rho>0$. 

\begin{lem}[Monotonicity of tuning path]
\label{lem:feasibility_SDP_lambda_infinity}
Suppose $A$ is a positive definite matrix, and let $\lambda_{\max}(A)$ and $\lambda_{\min}(A)$ denote its respective largest and smallest eigenvalues. 
\newline
(1) If $n \rho > \lambda_{\max}(A)$, then $Z^{\diamond} = n^{-1} J_{n}$, where $J_{n}$ is the $n \times n$ matrix of all ones, is the unique solution of the SDP~\eqref{eqn:clustering_Kmeans_sdp_unknown_K}. 
\newline
(2) If $n \rho < \lambda_{\min}(A)$, then $Z^{\dagger} = I_n$, the $n \times n$  identity matrix, is the unique solution of the SDP~\eqref{eqn:clustering_Kmeans_sdp_unknown_K}. 
\newline
(3) If $\tilde Z_{\rho_1}$ and $\tilde Z_{\rho_2}$ are two solutions of the SDP~\eqref{eqn:clustering_Kmeans_sdp_unknown_K} with the regularization parameter taking values $\rho_1$ and $\rho_2$, respectively. If $\rho_1 < \rho_2$, then $\tr(\tilde Z_{\rho_1})\geq \tr(\tilde Z_{\rho_2})$. Furthermore, if at least one of the two SDPs has a unique solution, then $\tr(\tilde Z_{\rho_1})> \tr(\tilde Z_{\rho_2})$.
\end{lem}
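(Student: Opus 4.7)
The plan is to reduce all three parts to a single structural fact about $\sC$: every $Z \in \sC$ is a symmetric doubly stochastic positive semidefinite matrix, so all its eigenvalues lie in $[0,1]$, and $\vone_n/\sqrt{n}$ is an eigenvector with eigenvalue $1$. As immediate consequences, both $W := Z - n^{-1} J_n$ and $I_n - Z$ are positive semidefinite (with $\vone_n$ in their kernel), so each feasible $Z$ is ``sandwiched'' between $n^{-1}J_n$ and $I_n$ in the PSD order. Combined with the standard inequality $\lambda_{\min}(A)\,\tr(M) \leq \langle A, M\rangle \leq \lambda_{\max}(A)\,\tr(M)$ for $M \succeq 0$, this observation drives all three parts.

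For part (1), I would decompose $Z = n^{-1} J_n + W$ with $W \succeq 0$, noting $\tr(W) = \tr(Z) - 1$, and rewrite the objective as $\langle A, n^{-1} J_n\rangle - n\rho + (\langle A, W\rangle - n\rho \tr(W))$. The second bracket is bounded above by $(\lambda_{\max}(A) - n\rho)\tr(W)$, which is nonpositive since $n\rho > \lambda_{\max}(A)$, and is strictly negative unless $W = 0$ (because $W \succeq 0$ together with $\tr(W) = 0$ forces $W = 0$). This pins the unique maximizer as $Z^\diamond = n^{-1}J_n$. For part (2), I would symmetrically write $Z = I_n - M$ with $M := I_n - Z \succeq 0$; the objective becomes $\tr(A) - n^2\rho - (\langle A, M\rangle - n\rho \tr(M))$, and the subtracted bracket is bounded below by $(\lambda_{\min}(A) - n\rho)\tr(M) \geq 0$, strictly positive unless $M = 0$, since $n\rho < \lambda_{\min}(A)$. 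This identifies $Z^\dagger = I_n$ as the unique maximizer. Feasibility of both $n^{-1}J_n$ and $I_n$ in $\sC$ is immediate to check.

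For part (3), I would apply the standard exchange argument. Optimality of $\tilde Z_{\rho_i}$ at $\rho = \rho_i$ gives $\langle A, \tilde Z_{\rho_1}\rangle - n\rho_1 \tr(\tilde Z_{\rho_1}) \geq \langle A, \tilde Z_{\rho_2}\rangle - n\rho_1 \tr(\tilde Z_{\rho_2})$ together with the analogue obtained by swapping indices $1$ and $2$. Adding these two inequalities and cancelling the common $\langle A, \cdot\rangle$ contributions yields
\[
n(\rho_2 - \rho_1)\bigl(\tr(\tilde Z_{\rho_1}) - \tr(\tilde Z_{\rho_2})\bigr) \geq 0,
\]
which, with $\rho_1 < \rho_2$, produces the monotonicity claim. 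For the strict version, suppose the traces are equal; then the same two inequalities force $\langle A, \tilde Z_{\rho_1}\rangle = \langle A, \tilde Z_{\rho_2}\rangle$, so each $\tilde Z_{\rho_i}$ is also optimal at the other parameter, and uniqueness of either SDP collapses $\tilde Z_{\rho_1} = \tilde Z_{\rho_2}$. Hence any pair of distinct solutions must have strictly different traces, which is the intended content. No step is genuinely hard; the only substantive point is the opening spectral claim that every $Z \in \sC$ is doubly stochastic with spectrum in $[0,1]$, which follows from $Z^T = Z$, $Z\vone_n = \vone_n$, $Z \geq 0$, and $Z \succeq 0$ together with Perron--Frobenius.
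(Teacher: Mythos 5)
Your proposal is correct and follows essentially the same route as the paper's proof. The paper establishes (1) by expanding $-\langle n\rho I_n - A, Z\rangle$ in the eigenbasis of $Z$ (using that $n^{-1/2}\vone_n$ is a unit eigenvector with eigenvalue $1$ and that the remaining eigenvalues are nonnegative), and (2) by noting that every feasible $Z$ is symmetric, stochastic and positive semidefinite, hence has spectrum in $[0,1]$, followed by the matrix H\"older bound $\langle A - n\rho I_n, Z\rangle \leq \matnorm{A - n\rho I_n}_{\ast}\,\matnorm{Z}_{\mbox{\scriptsize op}}$. Your sandwich $n^{-1}J_n \preceq Z \preceq I_n$ packages exactly the same two spectral facts, and your version of (2) is marginally cleaner in that it sidesteps the equality analysis needed in the H\"older step. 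The exchange argument for the weak inequality in (3) is identical to the paper's.

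The one substantive point concerns the strict inequality in (3). Your argument shows that equal traces together with uniqueness at one parameter force $\tilde Z_{\rho_1} = \tilde Z_{\rho_2}$, and you then restate the conclusion as ``distinct solutions have distinct traces,'' which is weaker than the lemma's literal claim. This is not a defect of your write-up: the paper's own proof has the same gap, since it asserts that uniqueness of one SDP makes one of the two optimality inequalities strict, and that assertion fails precisely when $\tilde Z_{\rho_1} = \tilde Z_{\rho_2}$ --- a case that genuinely occurs (under the hypotheses of Theorem~\ref{thm:main_adaptive_lambda} the unique solution equals $Z^\ast$, with trace $K$, for an entire interval of $\rho$). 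Your cautious reading is therefore the defensible one, and you lose nothing relative to the paper.
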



According to the interpretation of the SDP~\eqref{eqn:clustering_Kmeans_sdp}, the trace $\tr(Z)$ of the solution can be viewed as the fitted number of clusters.
Consequently, Lemma~\ref{lem:feasibility_SDP_lambda_infinity} implies that smaller values of $\rho$ will result in more clusters (with smaller sizes) in $\tilde{Z}_\rho$. In practice, we need to properly select the tuning parameter $\rho$. We propose the following decision rule for this purpose. For each $\rho$, we run the SDP problem~\eqref{eqn:clustering_Kmeans_sdp_unknown_K} and extract the value of $\tr(\tilde{Z}_\rho)$. Then we plot the solution path $\tr(\tilde{Z}_\rho)$ versus $\log(\rho)$ and pick the values of $\rho$ which spend the longest time (on the logarithmic scale) with a flat value of $\tr(\tilde{Z}_\rho)$. Here we recommend using the logarithmic scale since values of $\rho$ with non-trivial solutions $\tr(\tilde Z_\rho)$ tends to be close to zero.
Algorithm~\ref{alg1} below summarizes this decision rule.
\vspace{1em}

\RestyleAlgo{boxruled}
\LinesNumbered
 \begin{algorithm}[H]\label{alg1}
 	Set an increasing sequence $\{\rho_j\}_{j=1}^J$ of candidate values for $\rho$, for example, a geometric sequence in the interval $[n^{-1}\lambda_{\min}(A),\,n^{-1}\lambda_{\max}(A)]$. Set an upper bound $K_{\max}$ of $K$ and a tolerance level $\varepsilon\in (0,1/2)$.\\
 	\For{$j=1:J$}{
 		Solve the SDP~\eqref{eqn:clustering_Kmeans_sdp_unknown_K} with $\rho=\rho_j$;\\
 	}
 	\For{$k=2:K_{\max}$}{
 	Let $j_{k,1}$ be the smallest index such that $\tr(\tilde Z_{\rho_{j_{k,1}}}) \leq k +\varepsilon$;\\
 	Let $j_{k,2}$ be the largest index such that $\tr(\tilde Z_{\rho_{j_{k,2}}}) \geq k-\varepsilon$;\\
 	Let $L_k = \log \rho_{j_{k,2}} - \log\rho_{j_{k,1}}$;\\
 	}
 	\KwResult{Choose $\hat K=\underset{2\leq k \leq K_{\max}}{\arg\max} L_k$ as the estimated number of clusters, and $\hat \rho = \eta_{\lfloor (j_{\hat K, 1}+j_{\hat K, 2})/2\rfloor}$ as the selected regularization parameter.
 	}
 	\caption{Selection of regularization parameter $\rho$ and estimation of number of clusters $K$.} 
 \end{algorithm} 
 \vspace{1em}

Figure~\ref{fig:diffusion_kmeans_lambda_demo} shows the empirical result on the three clusters (one disk and two annuli) example in Section~\ref{sec:introduction}. According to Lemma~\ref{lem:feasibility_SDP_lambda_infinity}, the estimated number of clusters, proxied by $\tr(\tilde{Z}_\rho)$, is a non-increasing function of $\rho$.
In particular, the trace $\tr(\tilde Z_\rho)$ in the solution path in the upper left panel of Figure~\ref{fig:diffusion_kmeans_lambda_demo} stabilizes around $2$ and $3$, indicating that both $2$ and $3$ are candidate values for the number of clusters. In particular, the interval of $\rho$ (on the logarithmic scale) corresponding to value $3$ is much larger than that to value $2$, indicating that $3$ is more likely to be the true number of clusters (cf. the (correct) case in Figure~\ref{fig:diffusion_kmeans_lambda_demo}). In Section~\ref{sec:main_results}, we will use our theory to explain this stabilization phenomenon, which partially justifies our $\rho$ selection rule.

In addition, as we can see from the rest three panels in Figure~\ref{fig:diffusion_kmeans_lambda_demo}, by gradually increasing $\rho$, the adaptive diffusion $K$-means method produces a hierarchical clustering structure. Unlike the top-down or bottom-up clustering procedures which are based on certain greedy rule and can incur inconsistency, the hierarchical clustering structure produced by our approach is consistent --- it does not depend on the order of partitioning or merging due to the uniqueness of the global solution from the convex optimization via the SDP. 
 Similar observations can be drawn on another example shown in Figure~\ref{fig:diffusion_kmeans_lambda_demo_DGP2} containing a uniform sample on three rectangles (see DGP 2 in our simulation studies Section~\ref{sec:simulations} for details).

Further, it is interesting to observe in Figure~\ref{fig:diffusion_kmeans_lambda_demo} that the regularized diffusion $K$-means tuned with two clusters yields a merge between the outer annulus and the disk, which gives the largest total diffusion affinity in the objective function~\eqref{Eqn:Diffussion_K_Means} among the three possible combinations of the true clusters. Since diffusion affinity decays exponentially fast to zero in the squared Euclidean distance (for the Gaussian kernel), the diffusion affinity matrix $A = P_{n}^{2t} D_{n}^{-1}$ tends to have a block diagonal structure, as weights between points belonging to different clusters are exponentially small (cf.~\eqref{Eqn:approx_form_A_n} and Lemma~\ref{Lemma:total_degree}). Thus running SDP for examples with relatively well separated clusters, such as the one in Figure~\ref{fig:diffusion_kmeans_lambda_demo}, tends to merge two clusters with largest
within-cluster diffusion affinities that is irrespective of the between-cluster Euclidean distances. This may lead to a visually less appealing merge as in the Euclidean distance case (cf. the (under) case in Figure~\ref{fig:diffusion_kmeans_lambda_demo}). On the other hand, the regularized diffusion $K$-means is able to produce more reasonable partition in splitting the clusters (cf. the bottom-left panel in Figure~\ref{fig:diffusion_kmeans_lambda_demo}). In particular, if the regularization parameter $\rho$ is chosen such that the corresponding number $\hat{K}$ of clusters in the SDP solution is greater than $K$, then this will cause a split in one of the true clustering structures that
minimizes the between-cluster diffusion affinities after the splitting.
Moreover, in our simulation studies (setup DGP 3 in Section~\ref{sec:simulations}), we observe that the SDP relaxed regularized diffusion $K$-means performs much better in harder cases than the spectral clustering methods when the true clusters are not well separated. 

\begin{figure}[h!] 
    \centering
    \includegraphics[scale=0.34]{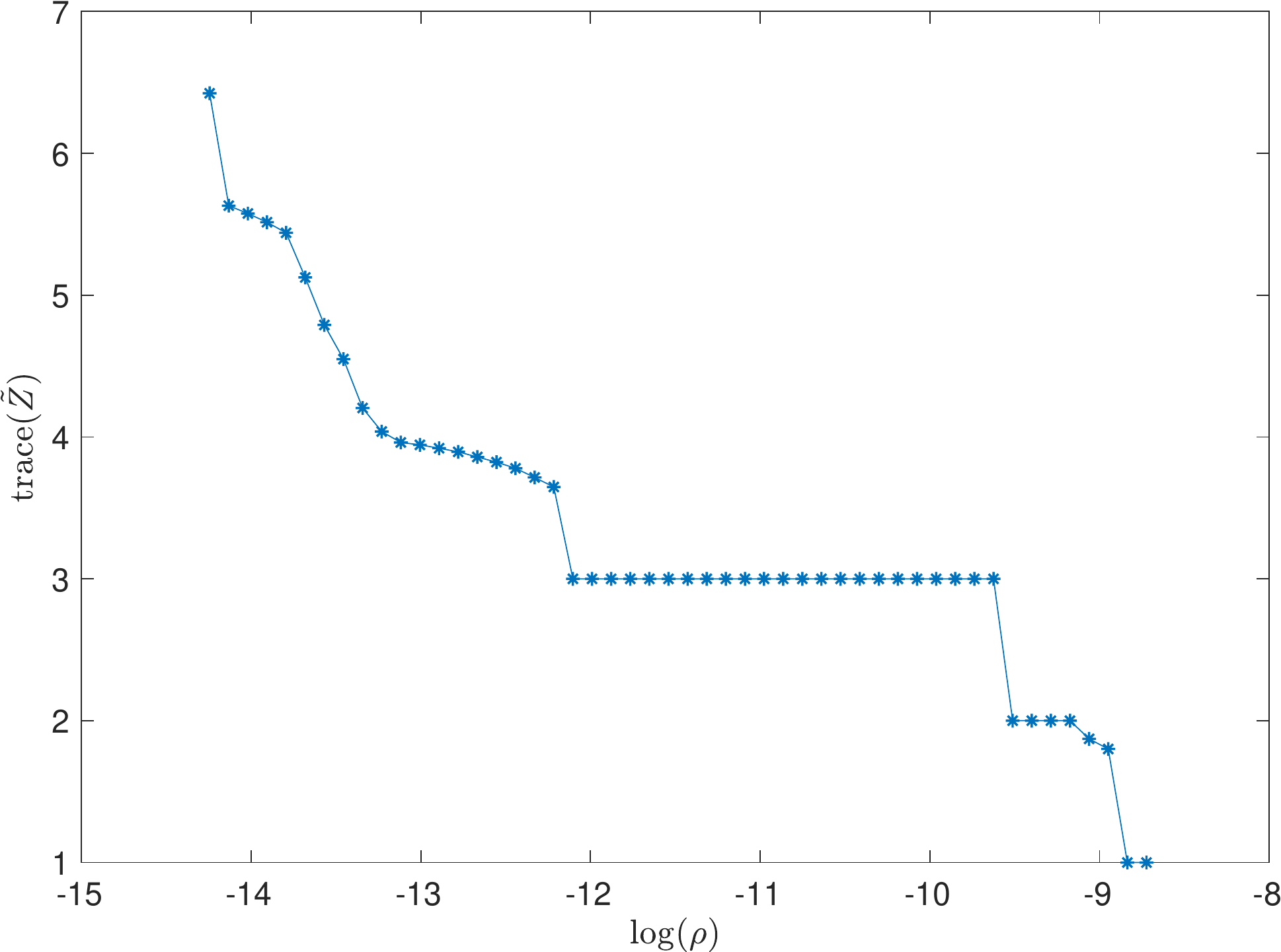}
    \includegraphics[scale=0.52]{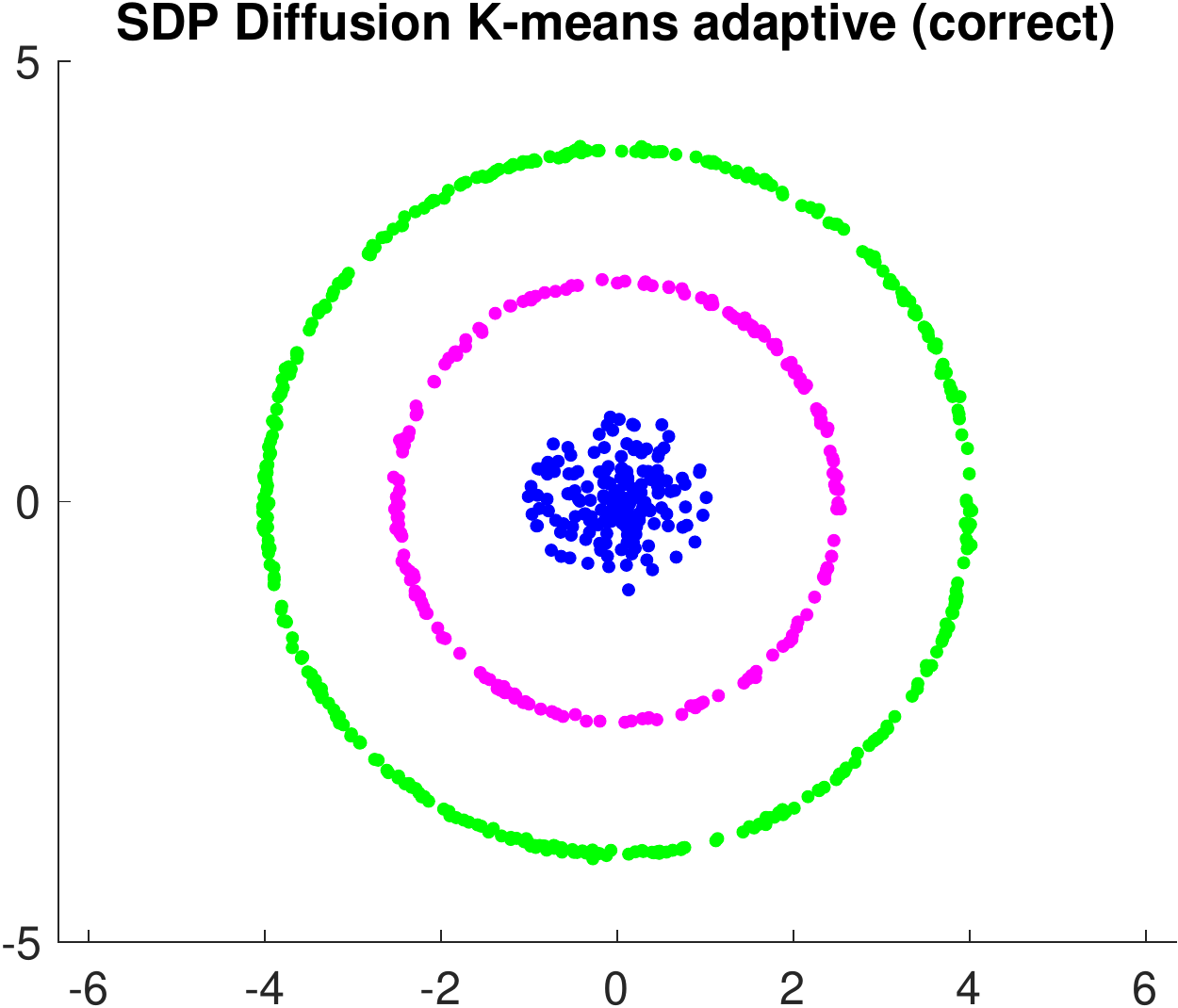}  \\
    \includegraphics[scale=0.52]{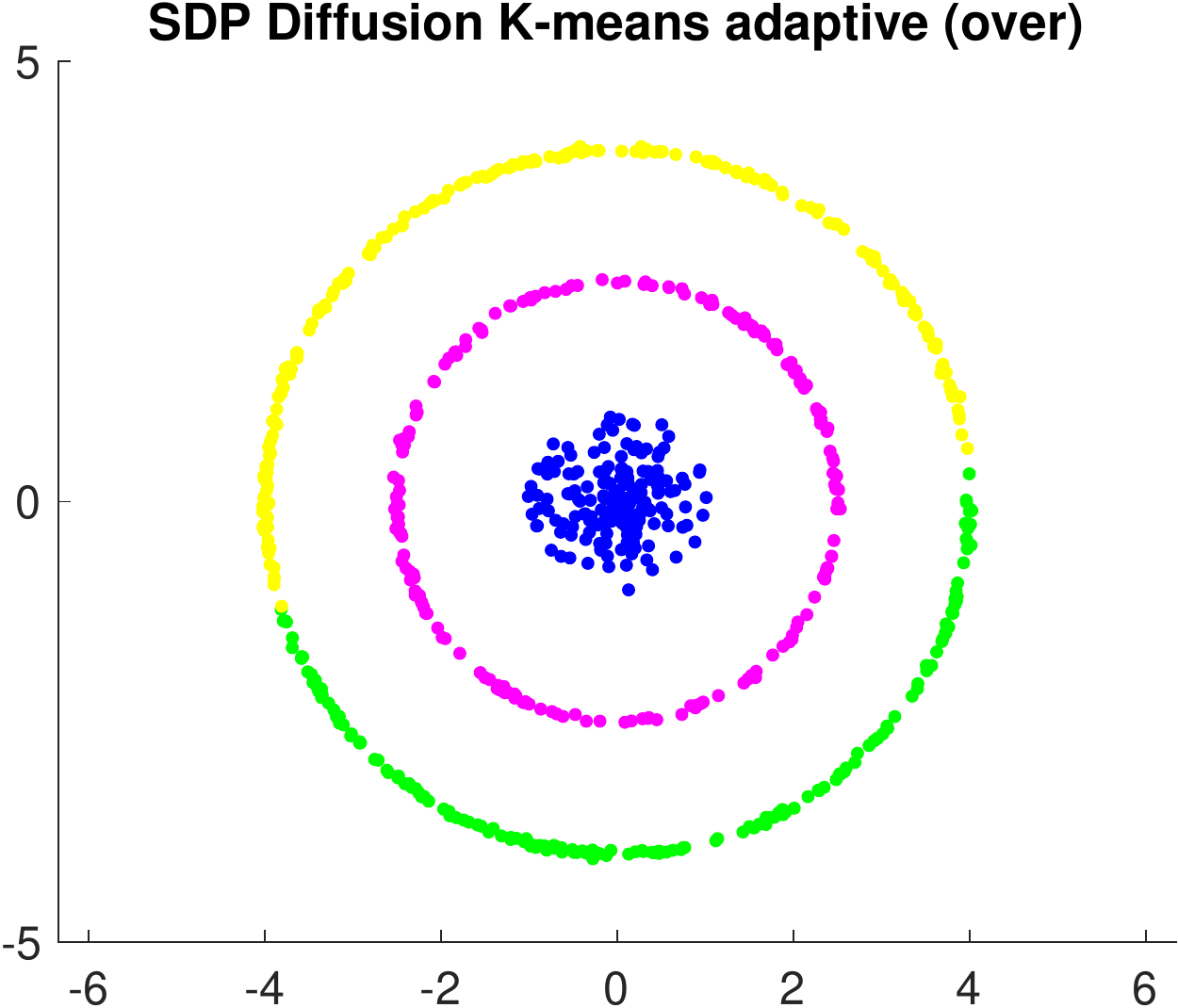}
    \includegraphics[scale=0.52]{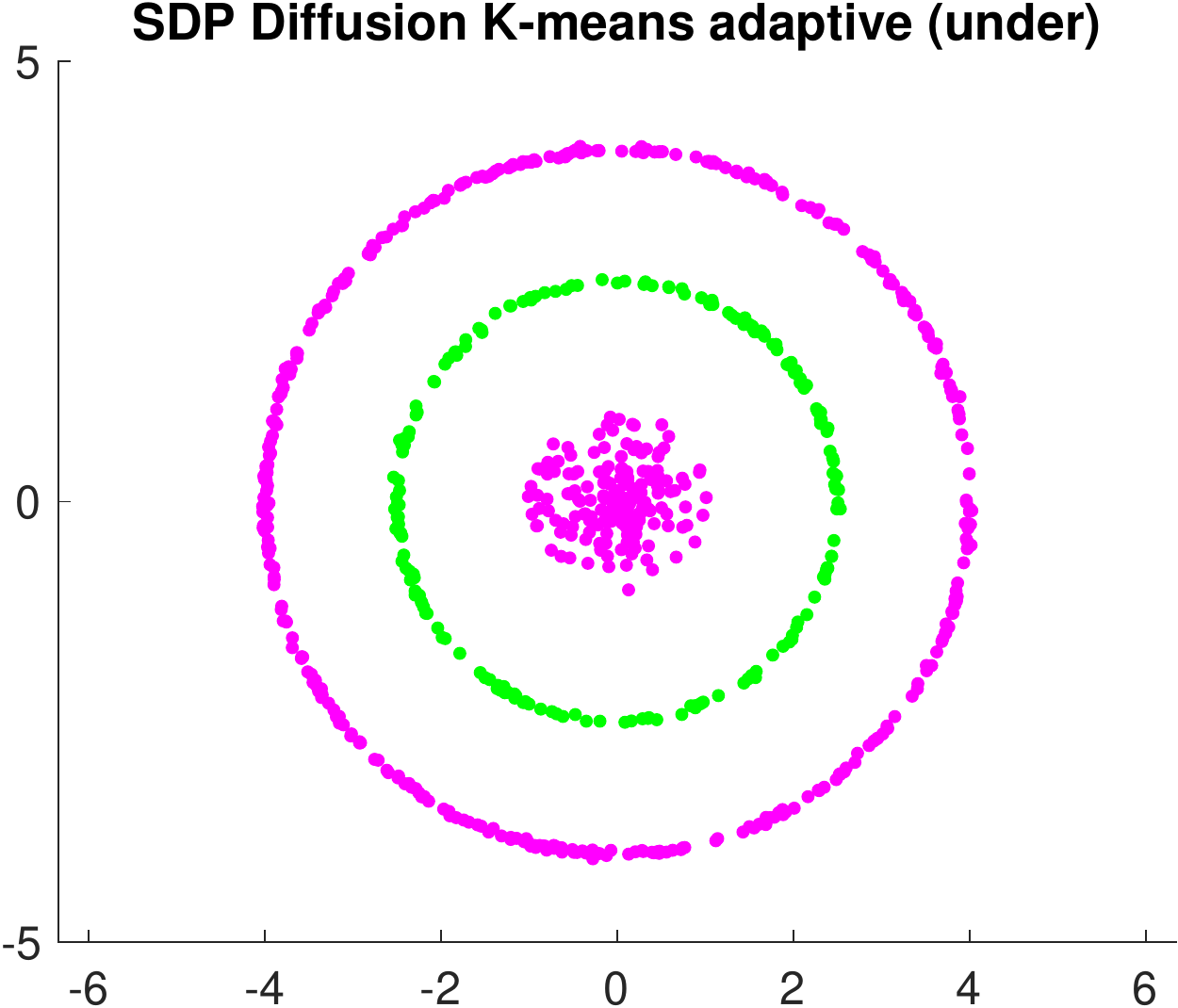}
    \caption{Plot of the estimated trace norms along the path of tuning parameter $\rho$ (on the log-scale) in the SDP relaxed regularized diffusion $K$-means clustering method~\eqref{eqn:clustering_Kmeans_sdp_unknown_K} on the synthetic data in Figure~\ref{fig:kmeans_demo}. The clustered data are shown for estimated number of clusters 2 (under), 3 (correct), 4 (over).}
    \label{fig:diffusion_kmeans_lambda_demo}
\end{figure}

\begin{figure}[h!] 
    \centering
    \includegraphics[scale=0.34]{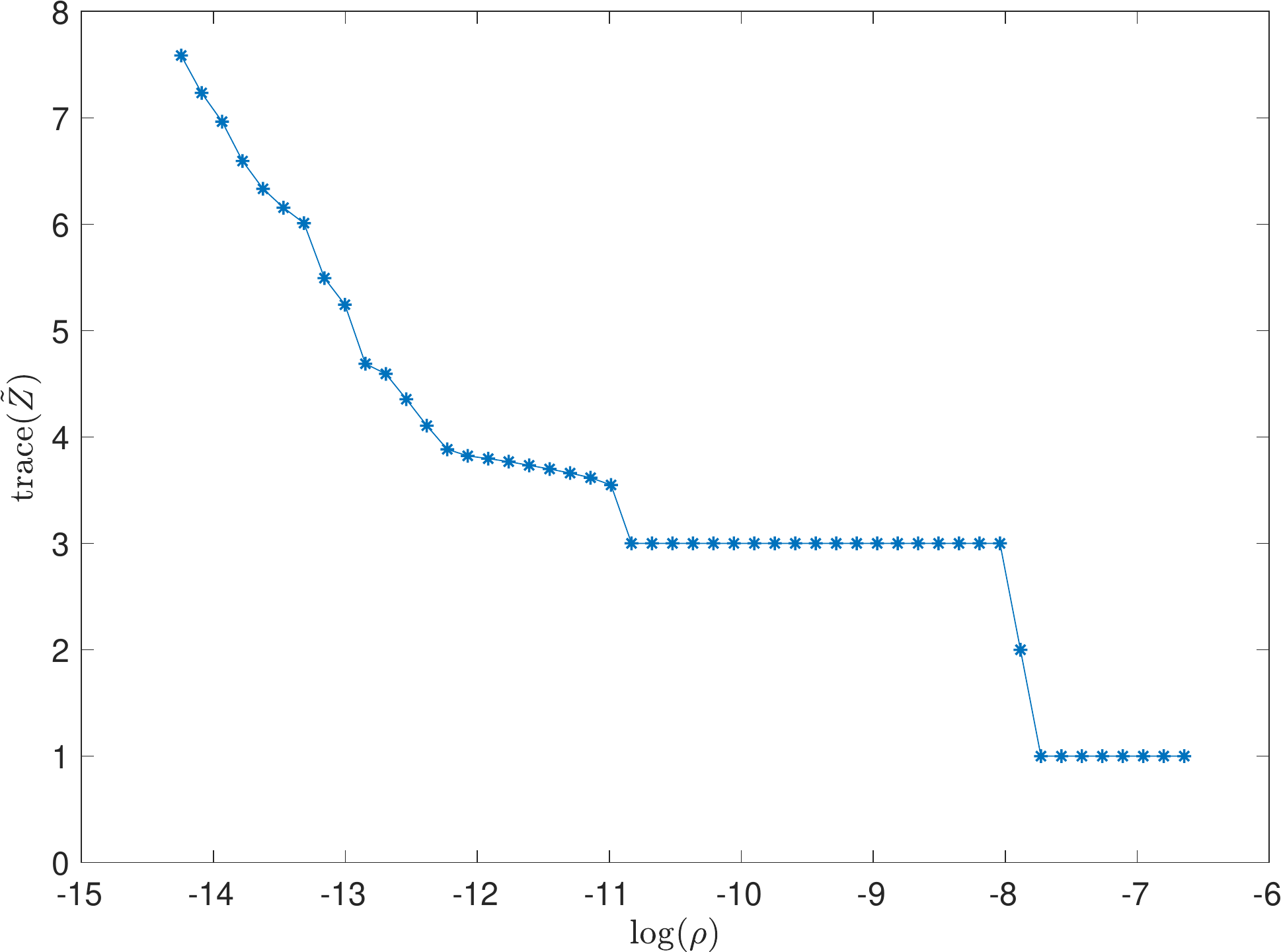}
    \includegraphics[scale=0.38]{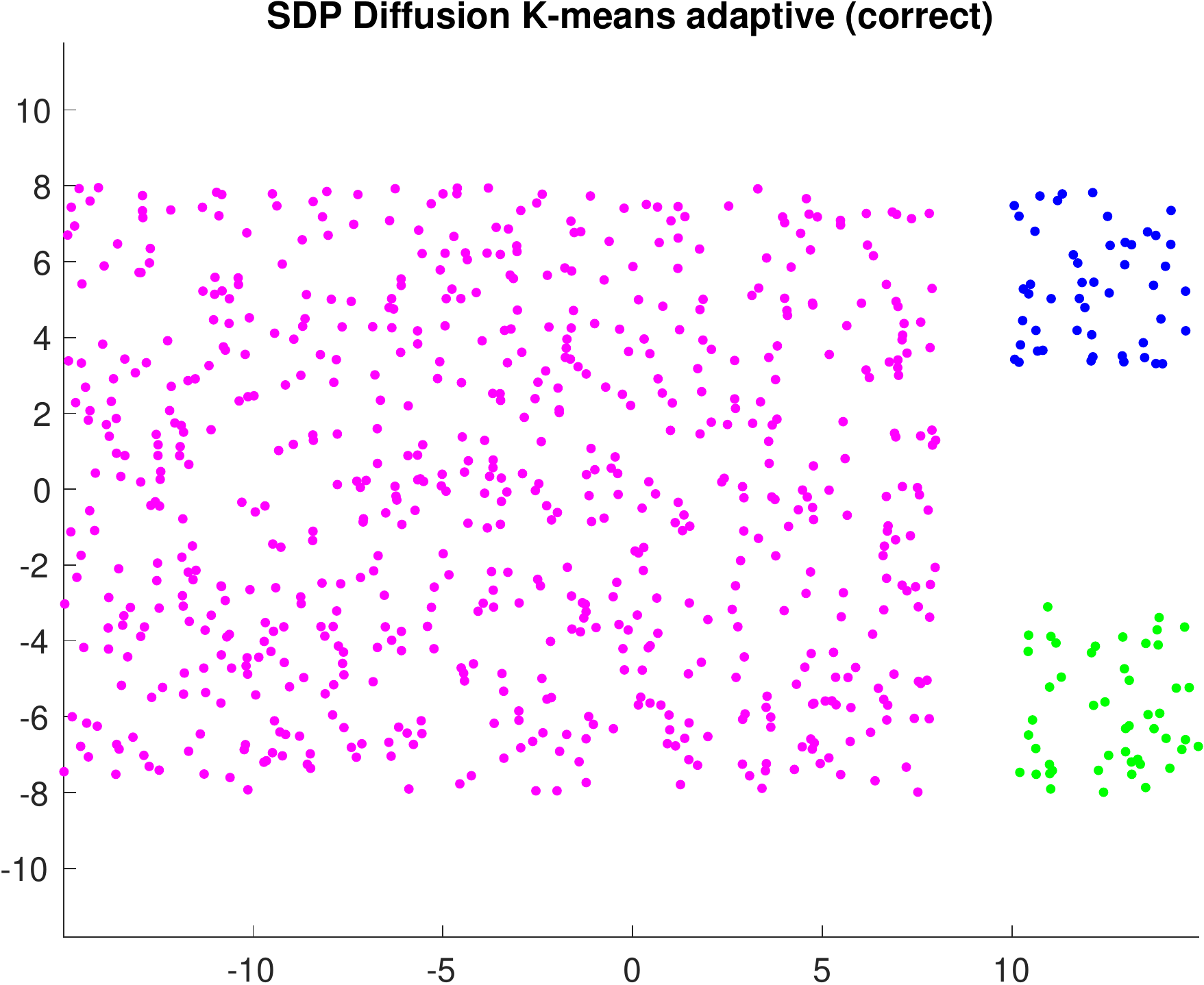}  \\
    \includegraphics[scale=0.38]{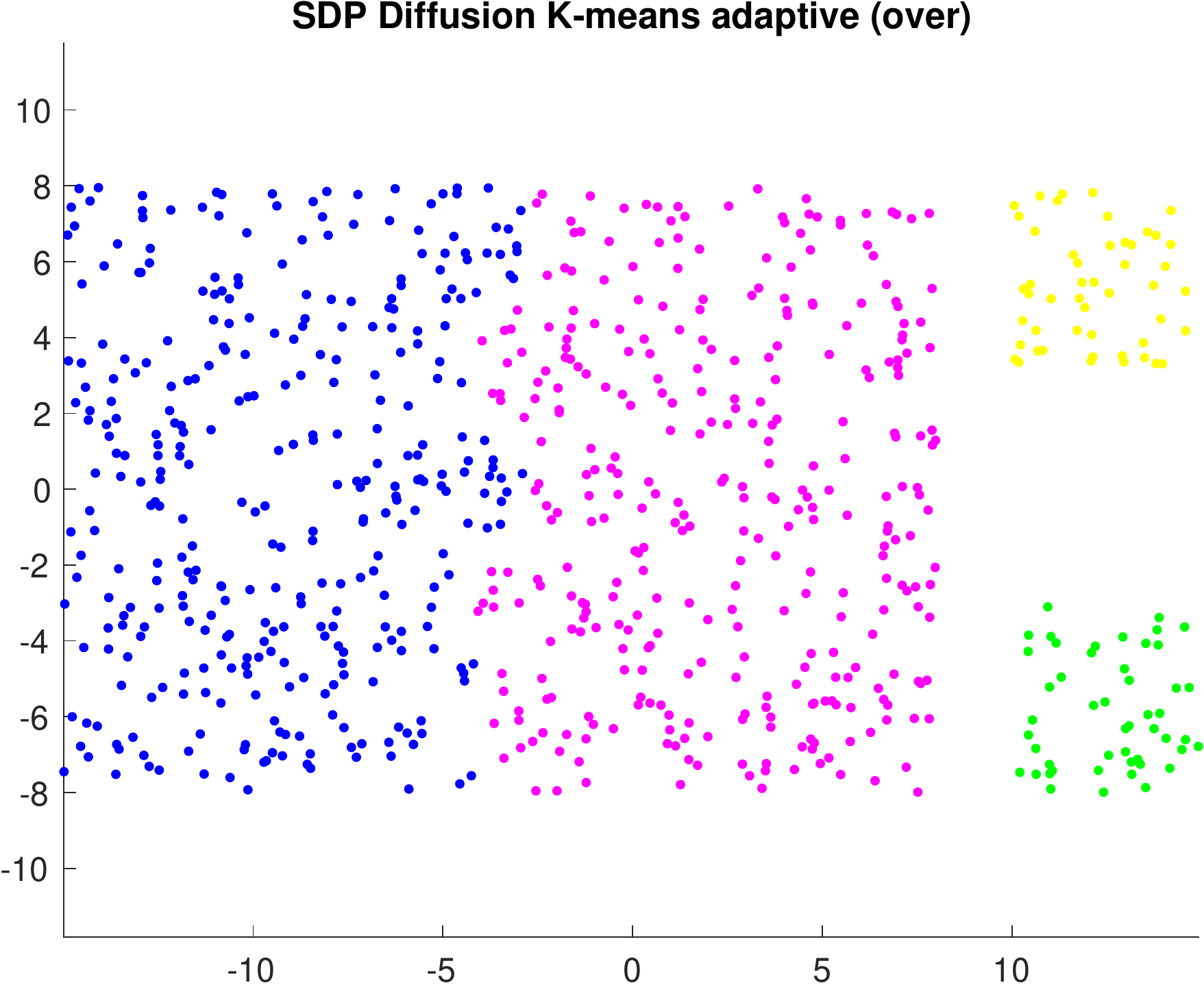}
    \includegraphics[scale=0.38]{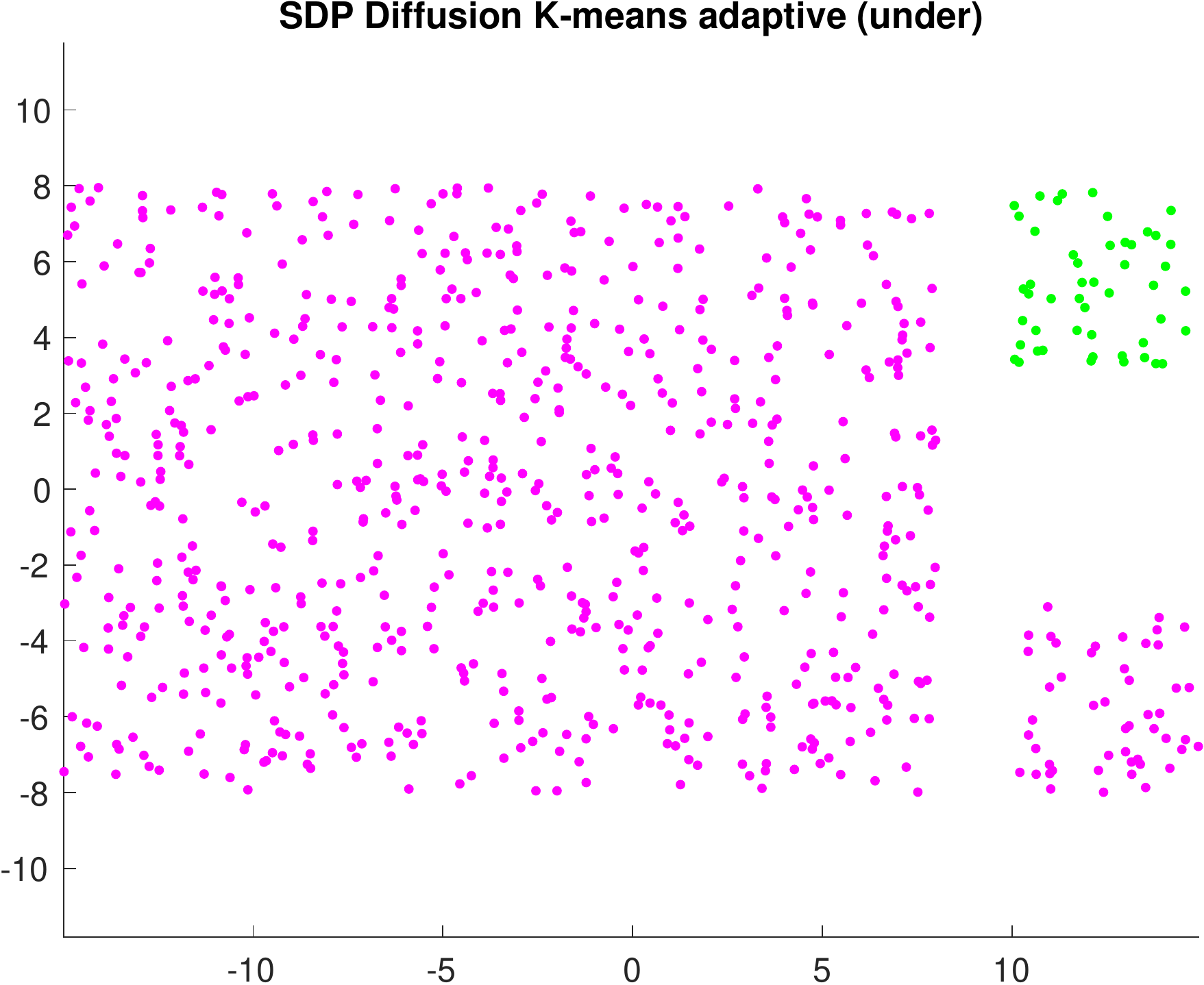}
    \caption{Plot of the estimated trace norms along the path of tuning parameter $\rho$ (on the log-scale) in the SDP relaxed regularized diffusion $K$-means clustering method~\eqref{eqn:clustering_Kmeans_sdp_unknown_K} for DGP=2. The clustered data are shown for estimated number of clusters 2 (under), 3 (correct), 4 (over).}
    \label{fig:diffusion_kmeans_lambda_demo_DGP2}
\end{figure}

\begin{figure}[h!] 
    \centering
        \includegraphics[scale=0.52]{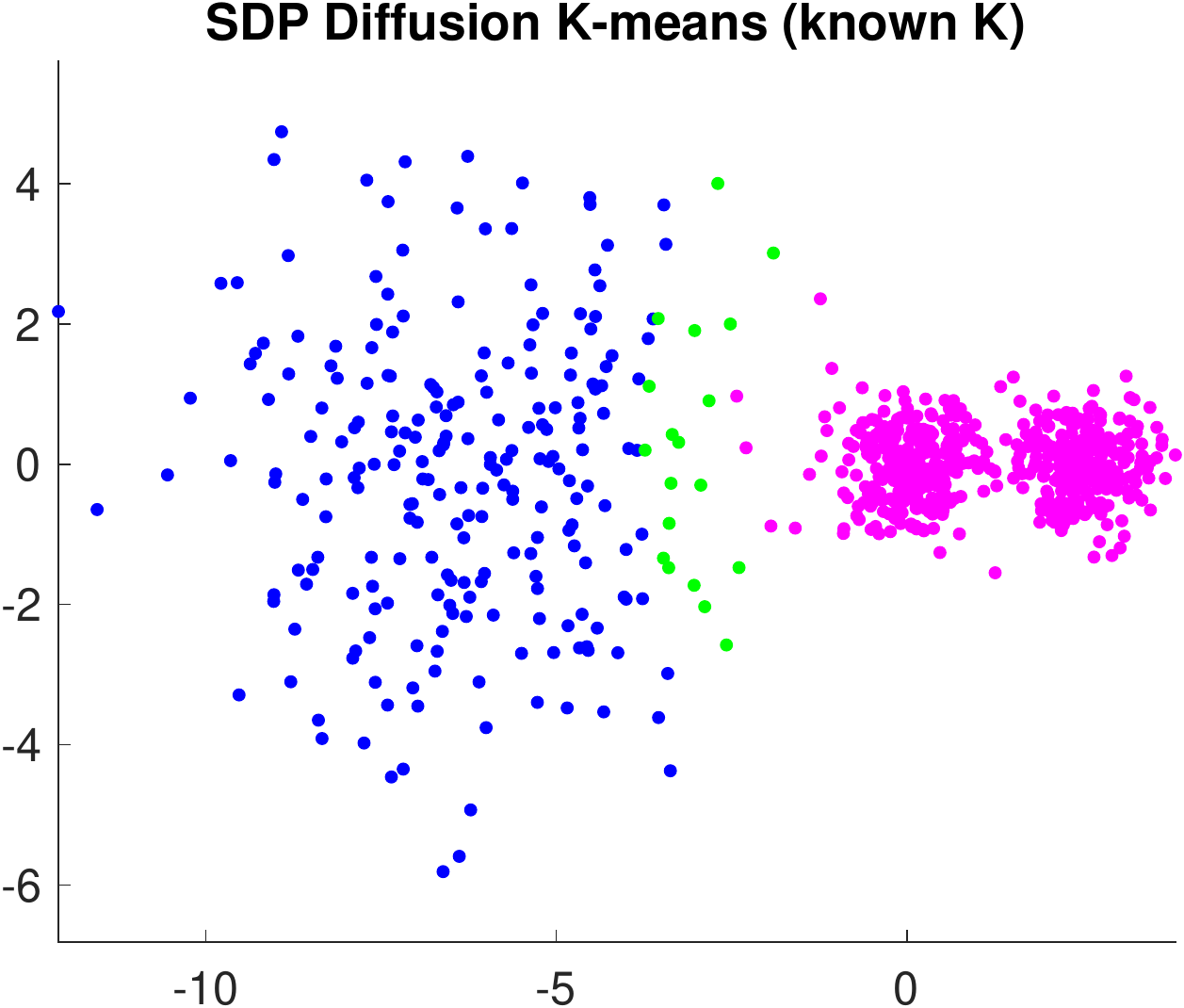}
    \includegraphics[scale=0.52]{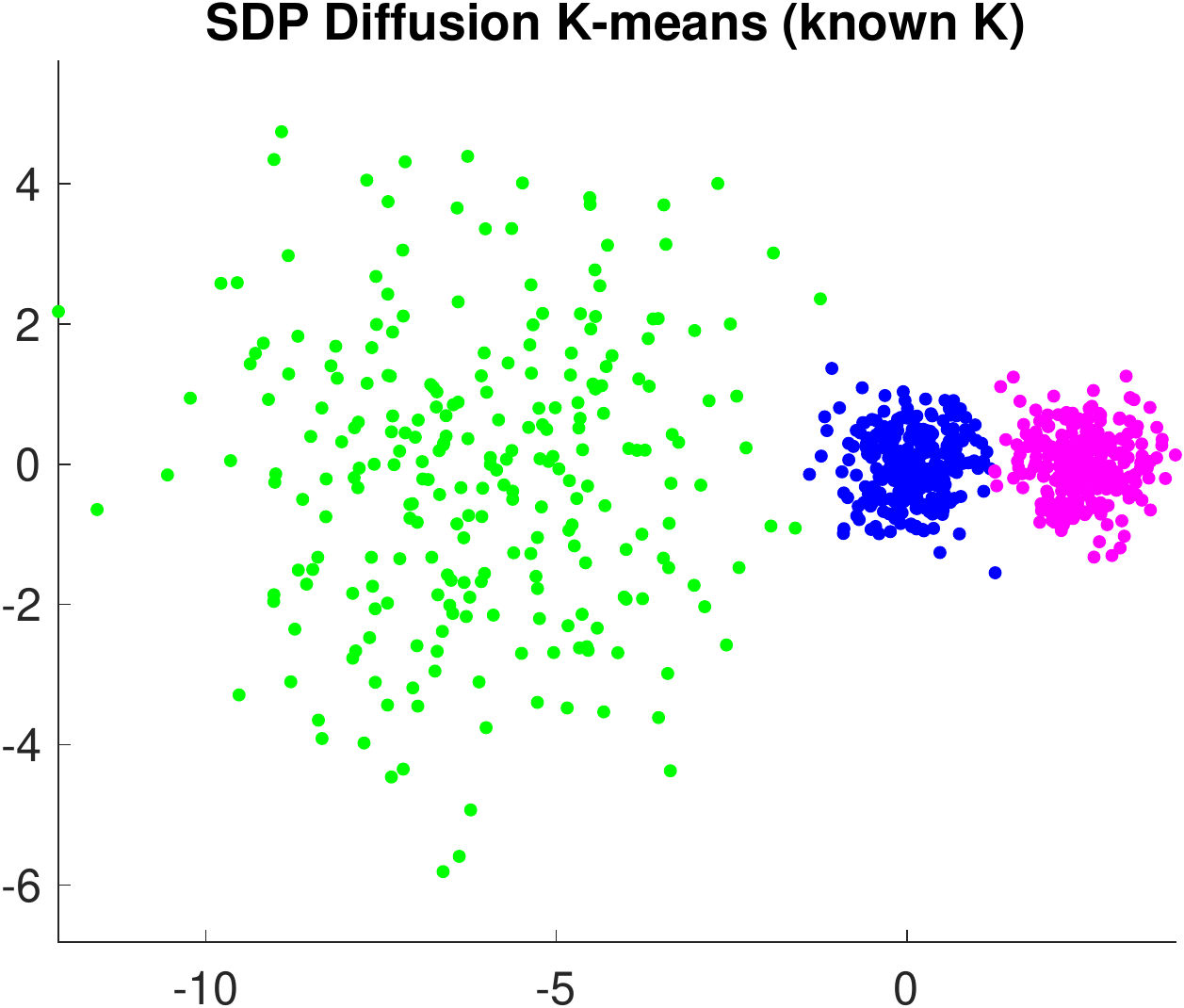}
    \caption{Plots of the SDP diffusion $K$-means clustering method without (left) and with (right) local scaling for the data generation mechanism DGP=3 in the simulation studies Section~\ref{sec:simulations}.}
    \label{fig:diffusion_kmeans_local_scaling_demo}
\end{figure}

\subsection{Localized diffusion $K$-means}
\label{subsec:localized_diffusion_Kmeans}

For clustering problems with different sizes, dimensions and densities, the diffusion $K$-means may have limitations since only one bandwidth parameter $h$ is used to control the local geometry on the domain. More precisely, according to our theory (for example, Theorem~\ref{thm:main} below), the optimal choice of the bandwidth parameter for a $q$-dimensional submanifold as one connected component in our clustering model is $h\asymp (\log \tilde{n}/\tilde{n})^{1/q}$, where $\tilde n$ corresponds to the sample size within this cluster and thus depends on the local cluster size or density level. 
Figure~\ref{fig:diffusion_kmeans_local_scaling_demo} demonstrates such an example for a mixture of three bivariate Gaussians, which consist of one larger Gaussian component with low density and two smaller Gaussian components with high density. Empirically, the diffusion $K$-means fails on this example (even after tuning) for the reason that the larger and smaller clusters have very different local densities. This motivates us to consider a variant of diffusion $K$-means, termed as \emph{localized diffusion $K$-means}, by using local adaptive bandwidth $h_i=h(X_i)$ for each $X_i$, $i\in[n]$. In particular, we adopt the self-tuning procedure from \cite{zelnik2005self} by setting $h_i$ to be $\|X_i-X_i^{(k_0)}\|$, where $X_i^{(k_0)}$ denotes the $k_0$-th nearest neighbor to $X_i$, and replacing $\mathcal K_n$ with $\mathcal K_n^\dagger=[\kappa^\dagger(X_i,X_j)]_{n\times n}$ (and accordingly replacing $D_n$ with $D_n^\dagger$ corresponding to the diagonal degree matrix associated with $\mathcal K^\dagger_n$) given by 
\begin{align*}
\kappa^\dagger (X_i,X_j) = \exp\Big(-\frac{\|X_i-X_j\|^2}{2h_ih_j}\Big),
\end{align*}
in the SDP~\eqref{eqn:clustering_Kmeans_sdp}.
Note that $\mathcal K^\dagger_n$ is generally no longer a positive semidefinite matrix. Intuitively for $i \in G_{k}^{*}$, the local scaling $h_{i}$ automatically adapts to the local density $p_{k}(X_{i})$ about $X_{i}$, the cluster size $n_k$ and the dimension $q_{k}$ for the $k$-th Riemannian submanifold. 
Specifically, for each cluster $k=1,\ldots,K$,  the $n_k$-by-$n_k$ submatrix $[\mathcal K_n^\dagger]_{G^\ast_kG^\ast_k}$ resembles the Gaussian kernel matrix with a homogeneous bandwidth $h_k \asymp (\log  n_k/n_k)^{1/q_k}$ that adapts to the local geometry in $\cM_k$. For points $X_i$ and $X_j$ belonging to distinct clusters  that are properly separated, $\kappa^\dagger (X_i,X_j)$ tends to be close to zero and is less affected by the choice of $h_i$ and $h_j$.
Heuristically, $h_{i}$ is larger for lower density regions where the degree function of $X_{i}$ is smaller so that the random walk can speed up mixing at such lower density regions. Overall, such a locally adaptive choice of bandwidth improves the mixing time of the random walk within each cluster, while leaves the between cluster jumping probabilities remaining small. As a consequence, the pairwise diffusion affinity matrix $A$ in our SDP formulation~\eqref{eqn:clustering_Kmeans_sdp} tends to exhibit a clearer block form reflecting the clustering structure.
 To compute $h_{i}$, we only need to specify $k_{0}$ to replace the (non-adaptive) bandwidth parameter $h$ whose value depend on the unknown cluster sizes $n_{k}$, dimensions $q_{k}$ of submanifolds $\cM_{k}$, and the underlying probability density functions $p_{k}$ on $\cM_{k}$. In contrast, the simple choice $k_{0} = \lfloor C \log{n} \rfloor$ guarantees that the local scaling $h_{i}$ adapts to the local density (cf. Theorem~\ref{thm:main_adaptive_h} in Section~\ref{sec:main_results}).

\section{Main results}
\label{sec:main_results}


In this section, we assume each $\cM_k$ is a compact connected $q_k$-dimensional Riemannian submanifold embedded in $\bb R^p$ with bounded diameter ${\rm Diam_k}$, absolute sectional curvature value ${\rm Sec_k}$, injectivity radius ${\rm Inj_k}$, and inverse reach ${\rm Rch_k^{-1}}$, where $\rm Rch_k:\,=\sup\big\{t > 0 :\, \forall x\in \bb R^p,\, \mbox{dist}(x,\cM_k)\leq t,\,  \exists!y\in\cM_k\, \mbox{s.t.~dist}(x,\cM_k)=\|x-y\|\big\}$. Throughout the rest of the paper, we assume that each $\mu_k$ has a ${\rm Lip_k}$-Lipschitz continuous density function $p_k$ with respect to the Riemannian volume measure on $\cM_k$, such that
\begin{align}\label{Eqn:density_condition}
\eta_k \leq p_k(x) \leq \frac{1}{\eta_k}\quad \mbox{for all }x\in \cM_k,
\end{align}
for some constant $\eta_k>0$.  For simplicity, we fix the Gaussian kernel in~\eqref{eqn:gaussian_kernel} for the diffusion $K$-means and its variants, which can be relaxed to more general isotropic kernel with exponential decay in the squared Euclidean distance (cf. Remark~\ref{rem:kernel_choice}).

\subsection{Exact recovery of diffusion $K$-means}
Let $\delta = \min_{1\leq k\neq k'\leq K} \|\cM_k-\cM_{k'}\|$, where $\|A-B\|=\inf_{x\in A, y\in B} \|x-y\|$ denote the (Euclidean) distance between two disjoint sets $A$ and $B$. Recall that the size of the true cluster $n_{k} =|G_{k}^{*}|$ for $k=1,\ldots,K$ and let $\underline{n} = \min_{1 \le k \le K} n_{k}$ denote the minimal cluster size. 

\begin{thm}[Exact recovery of SDP for diffusion $K$-means]\label{thm:main}
Fix any $\xi\geq 1$. Let $c_1,c_2, c_3, M_0$ be some positive constants only depending on $\{q_k,{\rm Sec_k},{\rm Inj_k}, {\rm Rch_k},\eta_k,{\rm Lip_k}\}_{k = 1}^{K}$, and $C_1,C_2$ only depending on  $\{q_k,{\rm Sec_k},{\rm Inj_k}, {\rm Rch_k},\eta_k,{\rm Lip_k}\}_{k = 1}^{K}$ and $\xi$. If $\underline n \geq M_0$ and 
\begin{equation}\label{eqn:bandwidth_condition_nonadaptive}
c_1 (\log n_k/ n_k)^{1/q_k}\,(\log n_k)^{\mathbf 1(q_k=2)/4}\leq h \leq c_2 \quad \mbox{for each } k\in[K],
\end{equation} 
where $\mathbf 1(q_k=2)$ equals to $1$ if $q_k=2$ and $0$ otherwise, then we can achieve exact recovery, that is $\hat Z= Z^\ast$, with probability at least $1- c_3K\,\underline{n}^{-\xi}$ as long as
\begin{align}\label{eqn:exact_recovery_condition_SDP_diffusion_Kmeans_LB_eigenval}
C_1\,nt\, \exp\Big\{-\frac{\delta^2}{2h^2}\Big\} +C_1\,\exp\bigg\{-\min_{1 \leq k \leq K}\Big\{\frac{\nu_{q_k}^2\lambda_1(\cM_k)}{4(q_k+2)(2\pi)^{q_k/2}}\Big\}\, h^2 t\bigg\}<  \frac{C_2}{n\, \max_{1 \leq k \leq K} \{n_k h^{q_k}\}},
\end{align}
where $\nu_d=\frac{\pi^{d/2}}{\Gamma(d/2+1)}$ denotes the volume of the $d$-dim unit ball, and $\lambda_1(\cM_k)>0$ denotes the second smallest eigenvalue of the drift Laplace-Beltrami operator (with the minus sign) on $\cM_k$ defined in~\eqref{eqn:drift_Laplace-Beltrami}.
\end{thm}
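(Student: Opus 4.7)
The plan is to establish exact recovery via a primal--dual (KKT) certificate argument that is standard for clustering SDPs (cf.\ Peng--Wei and Chen--Yang). Writing the Lagrangian of~\eqref{eqn:clustering_Kmeans_sdp} with multipliers $\alpha\in\bR$ for $\tr(Z)=K$, $\beta\in\bR^n$ for $Z\mathbf{1}_n=\mathbf{1}_n$, $B\succeq 0$ for $Z\succeq 0$, and an entrywise nonnegative $\Gamma$ for $Z\geq 0$, stationarity reads
\[
\alpha I_n+\tfrac12(\beta\mathbf{1}_n^T+\mathbf{1}_n\beta^T)-A=B+\Gamma.
\]
Since $Z^\ast=\sum_{k=1}^{K}n_k^{-1}\mathbf{1}_{G_k^\ast}\mathbf{1}_{G_k^\ast}^T$ has range $\mathrm{span}\{\mathbf{1}_{G_k^\ast}\}$ and positive entries only on diagonal blocks, complementary slackness forces $B\mathbf{1}_{G_k^\ast}=0$ for every $k$ and $\Gamma$ to vanish on diagonal blocks. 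A strict certificate of this form (entrywise positive $\Gamma$ off diagonal and $B$ of rank exactly $n-K$) certifies $Z^\ast$ as the unique optimum of~\eqref{eqn:clustering_Kmeans_sdp}.

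I would construct the certificate canonically: take $\Gamma$ supported on off-diagonal blocks and $B$ block-diagonal, so that $B_{G_k^\ast G_k^\ast}=W_{G_k^\ast G_k^\ast}$ and $\Gamma_{G_k^\ast G_\ell^\ast}=W_{G_k^\ast G_\ell^\ast}$ for $k\neq\ell$, where $W:=\alpha I_n+\tfrac12(\beta\mathbf{1}_n^T+\mathbf{1}_n\beta^T)-A$. Requiring $W_{G_k^\ast G_k^\ast}\mathbf{1}_{G_k^\ast}=0$ yields an affine system uniquely solvable for $\beta$ at any $\alpha$:
\[
\beta_i=\tfrac{2}{n_k}(A\mathbf{1}_{G_k^\ast})_i-\tfrac{\mathbf{1}_{G_k^\ast}^T A\mathbf{1}_{G_k^\ast}}{n_k^2}-\tfrac{\alpha}{n_k},\qquad i\in G_k^\ast.
\]
With $\alpha$ still free (I would take $\alpha=0$), dual feasibility reduces to two conditions. \emph{(I) Off-diagonal nonnegativity:} for $i\in G_k^\ast,\,j\in G_\ell^\ast$ with $k\neq\ell$, $\Gamma_{ij}=(\beta_i+\beta_j)/2-A_{ij}\geq 0$. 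In the ideal block-constant picture~\eqref{Eqn:approx_form_A_n} one has $\beta_i\approx N_k^{-1}$ and $A_{ij}\approx 0$, so (I) holds with margin of order $\min_k N_k^{-1}\asymp (n\,\max_k n_k h^{q_k})^{-1}$; this margin must absorb the between-cluster diffusion leakage controlled by Lemma~\ref{lem:between_cluster_random_walk}, producing the first term $nt\exp(-\delta^2/(2h^2))$ on the LHS of~\eqref{eqn:exact_recovery_condition_SDP_diffusion_Kmeans_LB_eigenval}. \emph{(II) PSD of diagonal blocks:} since $W_{G_k^\ast G_k^\ast}\mathbf{1}_{G_k^\ast}=0$ by construction, and since the rank-one piece $\tfrac12(\beta_{G_k^\ast}\mathbf{1}^T+\mathbf{1}\beta_{G_k^\ast}^T)$ vanishes on $\{\mathbf{1}_{G_k^\ast}\}^\perp$, $W_{G_k^\ast G_k^\ast}\succeq 0$ is equivalent to $\alpha\,\mathrm{Id}-A_{G_k^\ast G_k^\ast}$ being PSD on $\{\mathbf{1}_{G_k^\ast}\}^\perp$; that is, an operator-norm bound on the deviation of the within-cluster diffusion affinity from its rank-one stationary limit $N_k^{-1}\mathbf{1}\mathbf{1}^T$.

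The hard part will be the operator-norm control in (II). It bridges three scales: the empirical random walk $\cW_n$ restricted to $\{X_i\}_{i\in G_k^\ast}$, its population analogue on $(\cM_k,\mu_k)$, and the drift Laplace--Beltrami operator~\eqref{eqn:drift_Laplace-Beltrami}. Concretely, one shows via Lemma~\ref{lem:within_cluster_random_walk} together with a diffusion-semigroup spectral expansion \`a la Coifman--Lafon / von Luxburg that the restriction of $A_{G_k^\ast G_k^\ast}$ to $\{\mathbf{1}_{G_k^\ast}\}^\perp$ has operator norm at most $C\,\lambda_2(P)^{2t}$ plus an empirical-to-population error, where the population spectral gap satisfies the well-known expansion $\lambda_2(P)=1-c_{q_k}\,h^2\lambda_1(\cM_k)+o(h^2)$ with $c_{q_k}=\nu_{q_k}^2/\bigl(8(q_k+2)(2\pi)^{q_k/2}\bigr)$; raising to the $2t$-power yields the mixing factor $\exp\!\bigl(-c\,h^2 t\,\lambda_1(\cM_k)\bigr)$, matching the second term of~\eqref{eqn:exact_recovery_condition_SDP_diffusion_Kmeans_LB_eigenval}. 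The empirical-to-population gap contributes the $(\log n_k/n_k)^{1/q_k}$ factor in the bandwidth lower bound~\eqref{eqn:bandwidth_condition_nonadaptive}, with the $(\log n_k)^{1/4}$ correction in dimension $q_k=2$ coming from the sharper uniform convergence rate of the kernel estimator of the degree function in that borderline dimension.

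Finally, combining (I) and (II) and taking a union bound over the $K$ clusters to aggregate the random-walk concentration events of Lemmas~\ref{lem:within_cluster_random_walk}--\ref{lem:between_cluster_random_walk} produces the exceptional probability $c_3 K\underline n^{-\xi}$. The delicate point, and the main obstacle, is propagating the pointwise transition-kernel concentration through $2t$ iterations of the operator with explicit control in both $h$ and $t$: the two error terms in~\eqref{eqn:exact_recovery_condition_SDP_diffusion_Kmeans_LB_eigenval} are monotone in opposite directions of $t$, so the argument must balance them at a non-trivial regime whose width is dictated by the intrinsic geometric quantities (curvature, injectivity radius, reach, density Lipschitz constant) absorbed into the constants $c_1,c_2,c_3,M_0,C_1,C_2$.
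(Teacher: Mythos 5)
Your route is genuinely different from the paper's: the paper never builds a dual certificate. It runs a purely primal argument, starting from $\langle A_n,\hat Z-Z^\ast\rangle\geq 0$, splitting the inner product into diagonal and off-diagonal blocks, and closing the loop with the structural inequalities of Lemma~\ref{lem:some_ineq_feasible_set} (borrowed from Giraud--Verzelen/Bunea et al.), so that only \emph{entrywise} bounds on $[A_n]_{G_k^\ast G_m^\ast}$ and on $[A_n]_{G_k^\ast G_k^\ast}-N_k^{-1}\mathbf{1}_{G_k^\ast}\mathbf{1}_{G_k^\ast}^T$ are ever needed (the master condition~\eqref{eqn:DKM_SDP_exact_recover_master_condition}). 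A KKT certificate would, if completed, also give uniqueness and would reduce to essentially the same master condition, so the strategy is legitimate; but as written it has a concrete internal inconsistency. You correctly reduce condition (II) to ``$\alpha\,\mathrm{Id}-A_{G_k^\ast G_k^\ast}$ PSD on $\{\mathbf{1}_{G_k^\ast}\}^\perp$'' and then set $\alpha=0$. Since $A=P_n^{2t}D_n^{-1}=D_n^{-1/2}(D_n^{-1/2}\mathcal K_nD_n^{-1/2})^{2t}D_n^{-1/2}$ is positive semidefinite (Gaussian kernel), each diagonal block is PSD and its restriction to $\{\mathbf{1}_{G_k^\ast}\}^\perp$ generically has a strictly positive top eigenvalue; with $\alpha=0$ you are demanding that $-A_{G_k^\ast G_k^\ast}$ be PSD there, which fails. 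You must take $\alpha\geq\max_k\|A_{G_k^\ast G_k^\ast}\|_{\mathrm{op},\,\mathbf{1}^\perp}$, and then re-verify (I), which now requires $\alpha/n_k\lesssim N_k^{-1}$. This in turn needs an \emph{operator-norm} deviation bound, whereas Lemma~\ref{lem:within_cluster_random_walk} only supplies an $\ell_\infty$ bound; the crude conversion $\|M\|_{\mathrm{op}}\leq n_k\|M\|_\infty$ happens to be affordable here and recovers the paper's condition, but this trade-off between (I) and (II) is exactly the step you have skipped, and it is where the proof actually lives.

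Two smaller points. First, the $(\log n_k)^{1/4}$ correction for $q_k=2$ does not come from degree-function estimation; in the paper it enters through $\varepsilon_{n,k}$ in the spectral convergence of the normalized graph Laplacian to the drift Laplace--Beltrami operator (Lemma~\ref{lem:eigenval_convergence_normalized_graph_Laplacian}, via \cite{burago2014graph,trillos2018error}), i.e., from the transport-map discretization error in the borderline dimension. Second, the paper does not pass through a population diffusion operator with an expansion $\lambda_2(P)=1-c_{q_k}h^2\lambda_1(\cM_k)+o(h^2)$; it lower-bounds the spectral gap of the \emph{restricted empirical} chain $P_{n,k}$ by a Markov-chain comparison (Lemma~\ref{lem:comparison_thm}) against the unweighted random geometric graph walk, whose normalized Laplacian spectrum is then controlled by the cited discrete-to-continuum results. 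Your semigroup route would require a uniform empirical-to-population operator convergence that is not established in the paper and is substantially harder than what the comparison-theorem argument needs.
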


A proof of this theorem is provided in Section~\ref{Sec:proof_thm_main}. A crucial step in the proof is bounding from below the absolute spectral gap of the transition matrix associated with the restricted random walk $\mathcal W_n$ onto each submanifold $\mathcal M_k$ for $k=1,\ldots,K$.
We do so by applying a comparison theorem of Markov chains to connect this spectral gap with the eigensystem of the drift Laplace-Beltrami operator over the submanifold $\cM_k$ that captures the diffusion geometry. 
In particular, our proof borrows existing results \cite{burago2014graph,trillos2018error} on error estimates of using the spectrum of a random geometric graph to approximate the eigensystem of the drift Laplace-Beltrami operator in the numerical analysis literature.

\begin{rem}[Comments on the exact recovery condition~\eqref{eqn:exact_recovery_condition_SDP_diffusion_Kmeans_LB_eigenval}]\label{rem:comments_condition_main_thm}
We begin with comments on the second term on the left hand side of \eqref{eqn:exact_recovery_condition_SDP_diffusion_Kmeans_LB_eigenval}. We point out that this is essentially a mixing condition on random walks over the $K$ submanifolds. In particular, it is due to a relation between the mixing times of the heat diffusion process on each submanifold and its discretized random walk $\cW_n$ over vertices sampled from the submanifold. 

For simplicity, we illustrate this relation for $K = 1$ and uniform density $p_{1}$ on $\cM_{1} = \bS^{1}$, where $\bS^{1}$ is the unit circle in $\bR^{2}$.  As a one-dimensional compact smooth manifold (without boundary), $\bS^{1}$ can be (globally) parametrized by the angle $\theta \in [-\pi, \pi)$.
Under this parametrization, the density function $u(\tau,\theta)$ of the heat diffusion process on $\bS^{1}$, as a function of time $\tau$ and location $\theta$, is determined by the corresponding heat equation,
\begin{equation}
\label{eqn:heat_eq}
{\partial \over \partial \tau}u(\tau,\theta) + \Delta u(\tau,\theta) = 0, \quad (\tau, \theta) \in (0,\infty) \times \bS^{1},
\end{equation}
where $\Delta$ is the Laplace-Beltrami operator on $\bS^{1}$.
Under the same parametrization, the Laplace operator $\Delta = -{\rd^{2} \over \rd \theta^{2}}$ on $\bS^{1}$ (with the minus sign) admits the following eigen-decomposition 
\[
\Delta e^{\iota n \theta} = n^{2} e^{\iota n \theta}, \quad n=0,\pm1,\pm2,\dots,
\]
where $\iota = \sqrt{-1}$. That is, $(\lambda_{n}(\bS^{1}), e_{n}) := (n^{2}, e^{\iota n \theta})$ is an eigen-pair of $\Delta$, which implies that $\Delta$ is a positive semidefinite and unbounded operator on $L^{2}(\bS^{1})$ functions. 

Now we can solve the heat equation~\eqref{eqn:heat_eq} by expanding it with respect to this orthonormal basis.
More precisely, for any $f \in L^{2}(\bS^{1})$, the Fourier transform of $f$ is given by 
\[
f(\theta) = \sum_{n=-\infty}^{\infty} a_{n} e_{n} := \sum_{n=-\infty}^{\infty} \langle f, e_{n} \rangle e_{n}, 
\]
where $\langle f, g \rangle = (2\pi)^{-1} \int_{\bS^{1}} f(\theta) \overline{g(\theta)} \, \rd \theta$ is the standard inner product on $\bS^{1}$. 
Then the solution to heat equation \eqref{eqn:heat_eq} with the initial distribution $u(0,\theta) = f(\theta)$ is given by 
\[
u(\tau,\theta) = \sum_{n=-\infty}^{\infty} a_{n} e^{-n^{2}\tau} e_{n}.
\]
So if $\bS^{1}$ is insulated, then as $\tau \to \infty$ the heat flow has a constant equilibrium state with the value equal to the average of the initial heat distribution, namely $\lim_{\tau\to\infty} u(\tau,\theta) = (2\pi)^{-1} \int_{\bS^{1}} f(\theta)\,\rd\theta$. In particular, the second smallest eigenvalue $\lambda_{1}(\bS^{1})=1$ characterizes the mixing rate of the heat diffusion process. 

Now we consider the ``inverse Fourier transform" by expressing the solution $u$ in terms of  Green's function, also called the heat kernel, on $\bS^{1}$ as 
\begin{equation}
\label{eqn:heat_kernel_unit_circle}
H_{\bS^{1}}(\tau, \theta, \varphi) = \sum_{n=-\infty}^{\infty}  e^{-n^{2}\tau} {\tilde{e}}^{\iota n (\theta-\varphi)}, 
\end{equation}
where $ {\tilde{e}}^{\iota n \theta} = e_{n} / \sqrt{2\pi}$ is the rescaled orthonormal basis of $L^{2}(\bS^{1})$. Then we obtain that 
\[
u(\tau,\theta) = \mathscr H_{\bS^{1}}^{\tau} f (\theta), \mbox{ where }  \mathscr H_{\bS^{1}}^{\tau} f (\theta) := \int_{\bS^{1}} H_{\bS^{1}}(\tau, \theta, \varphi) f(\varphi) \,\rd \varphi 
\]
defines an (integral) heat diffusion operator on $\bS^{1}$. Then the Laplace operator on $\bS^{1}$ can be seen as the generator of the heat diffusion process: 
\[
\Delta f(\theta) = - \left.{\partial \over \partial \tau}u(\tau,\theta) \right|_{\tau=0} = - \left.{\partial \over \partial \tau}\mathscr H_{\bS^{1}}^{\tau}f(\theta) \right|_{\tau=0} = \lim_{\tau \to 0^{+}} {f(x) - \mathscr H_{\bS^{1}}^{\tau} f(x) \over \tau}. 
\]
Similarly, the normalized graph Laplacian $L_n = I_n - P_n = I_n- D_n^{-1} \mathcal H_n$ corresponding to the random walk $\cW_n$ over a random sample of $S_n$ on $\bS^{1}$ can also be seen as a discrete generator of $\cW_n$.

Recall that in the heat diffusion process, the second smallest eigenvalue $\lambda_{1}(\bS^{1})$ of its generator, i.e.,~the Laplace-Beltrami operator, characterizes the mixing rate of the heat diffusion process. Similarly, in the random walk $\cW_n$ (as a discretization of the heat process), the second smallest eigenvalue $\lambda_{j}(L_n)$ of its discrete generator, i.e.,~the normalized graph Laplacian operator $L_n$, characterizes its mixing rate. From Lemma \ref{lem:eigenval_convergence_normalized_graph_Laplacian}, the spectrum of these two operators are related in the sense that for each $j=1,2\dots,$ with probability at least $1-c_{1} n^{-c_{2}}$,  
\[
\lambda_{j}(\bS^{1}) \asymp h^{2} \lambda_{j}(L_n),
\] 
where $\lambda_{j}(\bS^{1})$ is the $j$-th eigenvalue of $\Delta$ and $\lambda_{j}(L_n)$ is the $j$-th eigenvalue of the normalized graph Laplacian. 
This means that we must change the time clock unit of the random walk on the graph by multiplying a factor of $h^{2}$ to approximate its underlying heat diffusion process. Thus the term $h^{2}t$ in the second term of  \eqref{eqn:exact_recovery_condition_SDP_diffusion_Kmeans_LB_eigenval} is the right time scale $\tau$ for running the heat diffusion process on the manifold, and we need $h^{2}t \to \infty$ for the heat diffusion process converges to an equilibrium distribution. On finite data, this means that the random walk converges to its stationary distribution over the points $S_n$ sampled from the submanifold. Using this correspondence, the second term on the left hand side of \eqref{eqn:exact_recovery_condition_SDP_diffusion_Kmeans_LB_eigenval} is a mixing condition on random walks over the $K$ submanifolds. 


The first term on the left hand side of \eqref{eqn:exact_recovery_condition_SDP_diffusion_Kmeans_LB_eigenval} can be seen as a separation requirement of the $K$ disjoint submanifolds $\cM_{1},\dots,\cM_{K}$. In particular, if $t = n^{\epsilon}$ for some $\epsilon > 0$ (i.e., we run the random walk in polynomial times/steps), then the minimal separation should obey 
\begin{equation}\label{eqn:lower_bound_Delta}
\delta \gtrsim h \sqrt{\log n}
\end{equation}
in order to achieve the exact recovery for the manifold clustering problem. 

Combining the two terms of~\eqref{eqn:exact_recovery_condition_SDP_diffusion_Kmeans_LB_eigenval}, we see that steps of the random walk must be properly balanced: we would like the random walk on the similarity graph to sufficiently mix within each cluster (second term of~\eqref{eqn:exact_recovery_condition_SDP_diffusion_Kmeans_LB_eigenval}), while it does not overly mix to merge the true clusters (first term of~\eqref{eqn:exact_recovery_condition_SDP_diffusion_Kmeans_LB_eigenval}). This reflects the {\it multi-scale} property of the diffusion $K$-means. 
\qed
\end{rem}

\begin{rem}[Comparisons with simple thresholding]
\label{rem:thresholding}
In view of the approximation property~\eqref{Eqn:approx_form_A_n} of the empirical diffusion affinity matrix $A_{n} = P_{n}^{2t} D_{n}^{-1}$ to a block-diagonal matrix in Remark~\ref{rem:intution_DKM}, one can show that a simple thresholding of the matrix $A_{n}$ also yields the exact recovery for a properly chosen threshold. Indeed, by the triangle inequality, we have for any $i,j \in G_k^\ast$ in the same cluster, 
\[
[A_n]_{ij} \geq N_{k}^{-1}  -\max_{1 \leq k \leq K} \big\|\,[A_n]_{G_k^\ast G_k^\ast}- N_k^{-1} \mathbf{1}_{G_k^\ast}\mathbf{1}_{G_k^\ast}^T\,\big\|_\infty.
\]
Choosing a threshold value $\gamma$ such that 
\begin{equation}
\label{eqn:thresholding_value}
\max_{1 \leq k\neq m \leq K} \|[A_n]_{G_k^\ast G_m^\ast}\|_\infty < \gamma < \min_{1\leq k\leq K}\Big\{\frac{1}{N_k}\Big\} - \max_{1 \leq k \leq K} \big\|\,[A_n]_{G_k^\ast G_k^\ast}- N_k^{-1} \mathbf{1}_{G_k^\ast}\mathbf{1}_{G_k^\ast}^T\,\big\|_\infty,
\end{equation}
one can completely separate the block-diagonal entries from the off-diagonal ones, thus achieving exact recovery. This argument leads to the following lemma. 

\begin{lem}[Master condition for thresholding to achieve exact recovery]
\label{lem:thresholding_master_bound}
If 
\begin{equation}
\label{eqn:thresholding_exact_recover_master_condition}
\max_{1 \leq k\neq m \leq K} \|[A_n]_{G_k^\ast G_m^\ast}\|_\infty + \max_{1 \leq k \leq K} \big\|\,[A_n]_{G_k^\ast G_k^\ast}- N_k^{-1} \mathbf{1}_{G_k^\ast}\mathbf{1}_{G_k^\ast}^T\,\big\|_\infty < \min_{1\leq k\leq K}\Big\{\frac{1}{N_k}\Big\},
\end{equation}
then thresholded estimator on the empirical diffusion affinity matrix $A_{n}$ with the threshold value satisfying~\eqref{eqn:thresholding_value} yields exact recovery.
\end{lem}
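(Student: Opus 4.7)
The plan is to verify that under the master condition~\eqref{eqn:thresholding_exact_recover_master_condition}, the open interval
\[
\mathcal I = \Big(\,\max_{1 \leq k\neq m \leq K} \|[A_n]_{G_k^\ast G_m^\ast}\|_\infty,\ \min_{1\leq k\leq K}\{N_k^{-1}\} - \max_{1 \leq k \leq K} \big\|[A_n]_{G_k^\ast G_k^\ast} - N_k^{-1}\mathbf{1}_{G_k^\ast}\mathbf{1}_{G_k^\ast}^T\big\|_\infty\,\Big)
\]
is non-empty, and that any $\gamma \in \mathcal I$ strictly separates between-cluster entries of $A_n$ from within-cluster entries. The conclusion then follows by applying entrywise thresholding of $A_n$ at level $\gamma$ and reading off the block structure directly; by construction the thresholded matrix coincides (up to a positive rescaling of each block) with $Z^\ast$, so the $K$ true clusters can be recovered as the connected components of the support graph.

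More concretely, I would proceed in three short steps. First, observe that the master condition~\eqref{eqn:thresholding_exact_recover_master_condition} is exactly the statement that the left endpoint of $\mathcal I$ is strictly smaller than the right endpoint, so $\mathcal I$ is non-empty and any $\gamma \in \mathcal I$ is admissible. Second, for any indices $i, j \in G_k^\ast$ belonging to the same true cluster, the triangle inequality gives
\[
[A_n]_{ij} \;\geq\; N_k^{-1} - \big|[A_n]_{ij} - N_k^{-1}\big| \;\geq\; \min_{1 \leq \ell \leq K}\{N_\ell^{-1}\} - \max_{1 \leq \ell \leq K}\big\|[A_n]_{G_\ell^\ast G_\ell^\ast} - N_\ell^{-1}\mathbf{1}_{G_\ell^\ast}\mathbf{1}_{G_\ell^\ast}^T\big\|_\infty \;>\; \gamma.
\]
Third, for any $i \in G_k^\ast$ and $j \in G_m^\ast$ with $k \neq m$, the trivial entrywise bound gives
\[
[A_n]_{ij} \;\leq\; \max_{1 \leq k' \neq m' \leq K} \|[A_n]_{G_{k'}^\ast G_{m'}^\ast}\|_\infty \;<\; \gamma.
\]

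Combining the two bounds, the indicator matrix $\mathbf 1\{[A_n]_{ij} > \gamma\}$ equals $1$ precisely on the within-cluster pairs and $0$ on the between-cluster pairs, hence its support is exactly the block-diagonal pattern of $Z^\ast$. Reading off the clusters $G_1^\ast,\ldots,G_K^\ast$ as the connected components of this support graph (or, equivalently, normalizing each block by its size) therefore achieves exact recovery. There is no real obstacle here: the lemma is essentially a deterministic reformulation of the heuristic discussion preceding it, and the nontrivial content has already been packaged into the two quantities $\max_{k \neq m}\|[A_n]_{G_k^\ast G_m^\ast}\|_\infty$ and $\max_k\|[A_n]_{G_k^\ast G_k^\ast} - N_k^{-1}\mathbf{1}\mathbf{1}^T\|_\infty$, whose probabilistic control (via concentration of the empirical degrees and the random-walk transition entries, as in Lemmas~\ref{lem:within_cluster_random_walk} and~\ref{lem:between_cluster_random_walk}) is where the real work lies and is needed only when one wants to turn this master condition into an explicit signal-to-noise condition analogous to~\eqref{eqn:exact_recovery_condition_SDP_diffusion_Kmeans_LB_eigenval}.
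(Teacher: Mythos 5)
Your proof is correct and follows essentially the same route as the paper's own argument (given in the remark preceding the lemma): the master condition guarantees the threshold interval in~\eqref{eqn:thresholding_value} is non-empty, the triangle inequality pushes within-cluster entries of $A_n$ above any admissible $\gamma$, and between-cluster entries fall below it, so thresholding reads off the block-diagonal support of $Z^\ast$ exactly.
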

Note that condition~\eqref{eqn:thresholding_exact_recover_master_condition} in Lemma~\ref{lem:thresholding_master_bound} is slightly weaker than the master condition~\eqref{eqn:DKM_SDP_exact_recover_master_condition} of the SDP relaxed diffusion $K$-means in Lemma~\ref{lem:DKM_SDP_master_bound} (up to a factor of $K^{-1}$ for balanced clusters, say). On the other hand, thresholding has a tuning parameter $\gamma$ and it is unclear how to develop a principled procedure to choose the threshold value satisfying~\eqref{eqn:thresholding_value}. On the contrary, our SDP relaxed diffusion $K$-means only requires the knowledge of the number of cluster $K$ and it is tuning-free in that sense.
\qed
\end{rem}

It is worthy to note that the second smallest eigenvalue of the Laplace-Beltrami operator in condition \eqref{eqn:exact_recovery_condition_SDP_diffusion_Kmeans_LB_eigenval} of Theorem \ref{thm:main} can be regarded as characterizations of the connectedness of the submanifolds, where the latter can be formally quantified by the Cheeger isoperimetric constant defined as follows. 
\begin{defn}[Cheeger isoperimetric constant]
Let $\cM$ be a $q$-dimensional compact Riemannian manifold. Let $\mbox{Vol}(\cA)$ denote the volume of a $q$-dimensional submanifold $\cA \subset \cM$ and $\mbox{Area}(\cE)$ denote the $(q-1)$-dimensional area of a submanifold $\cE$. The {\it Cheeger isoperimetric constant} of $\cM$ is defined to be 
\[
\fh(\cM) = \inf_{\cE} \left\{ \frac{\mbox{Area}(\cE)}{\min(\mbox{Vol}(\cM_{1}), \mbox{Vol}(\cM_{2}))} \right\}, 
\]
where the infimum of the normalized manifold cut (in the curly brackets) is taken over all smooth $(q-1)$-dimensional submanifolds $\cE$ of $\cM$ that cut $\cM$ into two disjoint submanifolds $\cM_{1}$ and $\cM_{2}$ such that $\cM = \cM_{1} \bigsqcup \cM_{2}$. 
\end{defn}


In words, $\fh(\cM)$ quantifies the minimal area of a hypersurface that bisects $\cM$ into two disjoint pieces (cf.~Figure~\ref{fig:diffussion_dist}). Smaller values of $\fh(\cM)$ mean that $\cM$ is less connected -- in particular, $\fh(\cM) = 0$ implies that there are two disconnected components in $\cM$. The Cheeger isoperimetric constant may also be analogously defined for a graph and its value (i.e., the conductance of the graph) is closely related to the normalized graph cut problem. Suppose we have an i.i.d. sample $X_{1},\dots,X_{n}$ drawn from the uniform distribution on $\cM$ and $\cG_{n}$ is the neighborhood random graph with an edge between $X_{i}$ and $X_{j}$ if $\|X_{i}-X_{j}\| \le h$. It is shown in \cite{Arias-CastroPelletierPudlo2012_AAP} that the normalized graph cut (after a suitable normalization) converges to the normalized manifold cut, yielding an asymptotic upper bound on the conductance of $\cG_{n}$ based on $\fh(\cM)$. See also~\cite{Trillos:2016_JMLR} for improved results.

\begin{cor}\label{cor:main}
Under the setting of Theorem \ref{thm:main}, if 
\begin{align}\label{eqn:exact_recovery_condition_SDP_diffusion_Kmeans_Cheeger}
C_1\,nt\, \exp\Big\{-\frac{\delta^2}{2h^2}\Big\} +C_1\,\exp\Big\{- \underline{\fh}^{2}\, h^2 t\Big\} <  \frac{C_2}{n\,\max_{1 \leq k \leq K} \{n_kh^{q_k}\}},
\end{align}
where $\underline{\fh} = \min_{1 \leq k \leq K} \frac{\nu_{q_k}\,\fh(\cM_k)}{4\sqrt{q_k+2}\,(2\pi)^{q_k/4}}$, then $\hat Z= Z^\ast$ with probability at least $1- c_3K\,\underline{n}^{-\xi}$. 
\end{cor}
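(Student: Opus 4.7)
The plan is to obtain Corollary~\ref{cor:main} as a direct consequence of Theorem~\ref{thm:main} by invoking Cheeger's inequality to lower bound the second smallest eigenvalue $\lambda_1(\cM_k)$ of the drift Laplace-Beltrami operator on each submanifold in terms of the Cheeger isoperimetric constant $\fh(\cM_k)$. The strategy is essentially a plug-in argument: show that the hypothesis~\eqref{eqn:exact_recovery_condition_SDP_diffusion_Kmeans_Cheeger} of the corollary implies hypothesis~\eqref{eqn:exact_recovery_condition_SDP_diffusion_Kmeans_LB_eigenval} of the theorem, and then the conclusion $\hat Z = Z^\ast$ with probability at least $1 - c_3 K \underline n^{-\xi}$ follows immediately.

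First, I recall the appropriate weighted version of Cheeger's inequality: for a compact connected Riemannian manifold $\cM$ equipped with the drift Laplacian $\Delta_{\cM, \rho}$ defined in~\eqref{eqn:drift_Laplace-Beltrami}, the second smallest eigenvalue satisfies $\lambda_1(\cM, \rho) \geq \fh_\rho(\cM)^2/4$, where $\fh_\rho(\cM)$ is the Cheeger constant with boundary area and enclosed volumes integrated against the density weight. Since the density condition~\eqref{Eqn:density_condition} forces $\eta_k \leq p_k \leq 1/\eta_k$, a standard comparison yields $\fh_{p_k}(\cM_k) \geq c(\eta_k)\, \fh(\cM_k)$ for some constant $c(\eta_k) > 0$ depending only on $\eta_k$. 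Combined with the weighted Cheeger inequality, this gives
\begin{equation*}
\lambda_1(\cM_k) \;\geq\; c'(\eta_k)\, \fh(\cM_k)^2
\end{equation*}
for a constant $c'(\eta_k)$ that may be absorbed into the constants $C_1, C_2$ of Theorem~\ref{thm:main}, since those constants are already allowed to depend on $\{\eta_k\}_{k=1}^K$. After this absorption (or equivalently, by bookkeeping the density factor in $\underline{\fh}$), we obtain the simplified bound $\lambda_1(\cM_k) \geq \fh(\cM_k)^2/4$ in the sense relevant for matching the displayed constant.

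Next, I substitute into the second exponential term on the left-hand side of~\eqref{eqn:exact_recovery_condition_SDP_diffusion_Kmeans_LB_eigenval}. A direct computation gives
\begin{equation*}
\min_{1 \leq k \leq K}\frac{\nu_{q_k}^2\,\lambda_1(\cM_k)}{4(q_k+2)(2\pi)^{q_k/2}} \;\geq\; \min_{1 \leq k \leq K}\frac{\nu_{q_k}^2\, \fh(\cM_k)^2}{16(q_k+2)(2\pi)^{q_k/2}} \;=\; \underline{\fh}^2,
\end{equation*}
which is exactly the constant appearing in~\eqref{eqn:exact_recovery_condition_SDP_diffusion_Kmeans_Cheeger} by the definition $\underline{\fh} = \min_k \frac{\nu_{q_k}\fh(\cM_k)}{4\sqrt{q_k+2}(2\pi)^{q_k/4}}$. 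Consequently the left-hand side of~\eqref{eqn:exact_recovery_condition_SDP_diffusion_Kmeans_LB_eigenval} is dominated by the left-hand side of~\eqref{eqn:exact_recovery_condition_SDP_diffusion_Kmeans_Cheeger}, so~\eqref{eqn:exact_recovery_condition_SDP_diffusion_Kmeans_Cheeger} implies~\eqref{eqn:exact_recovery_condition_SDP_diffusion_Kmeans_LB_eigenval} and Theorem~\ref{thm:main} delivers the conclusion.

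The only slightly delicate point is ensuring that the version of Cheeger's inequality used applies to the drift Laplacian $\Delta_{\cM_k, p_k}$ rather than the plain Laplace-Beltrami operator, and that the comparison between the weighted Cheeger constant $\fh_{p_k}(\cM_k)$ and the unweighted one $\fh(\cM_k)$ produces only density-dependent constants (no new geometric quantities). This is a standard fact whose density-dependent prefactor is harmlessly absorbed into the constants $C_1, C_2$ that already depend on $\{\eta_k\}$; I do not expect any fundamentally new difficulty beyond this bookkeeping.
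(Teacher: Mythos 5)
Your proposal is correct and follows essentially the same route as the paper, whose entire proof consists of citing Cheeger's inequality $\lambda_{1}(\cM_{k}) \geq \fh(\cM_{k})^{2}/4$ and substituting into condition~\eqref{eqn:exact_recovery_condition_SDP_diffusion_Kmeans_LB_eigenval}; your algebra matching $\min_k \nu_{q_k}^2\fh(\cM_k)^2/\bigl(16(q_k+2)(2\pi)^{q_k/2}\bigr)$ with $\underline{\fh}^2$ is exactly right. Your additional care about the drift Laplacian versus the plain Laplace--Beltrami operator (and the weighted versus unweighted Cheeger constant) is a legitimate subtlety that the paper silently glosses over, and your resolution via density-dependent constants absorbed into $C_1, C_2$ is reasonable.
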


Since $\delta$ reflects the separation of the submanifolds of $S = \bigsqcup_{k=1}^{K} \cM_{k}$ and $\fh(\cM_k)$ reflects the degree of connectedness of the submanifold $\cM_{k}$, the (overall) hardness of the manifold clustering problem is determined by $(\delta, \underline{\fh})$. In particular, if $t = n^{\epsilon}$ for some $\epsilon > 0$, then we require that 
\[
\underline{\fh} \gtrsim {1 \over h} \sqrt{\log n \over n^{\epsilon}}, 
\]
in addition to \eqref{eqn:lower_bound_Delta}. Our results in the rest subsections can also be stated via this geometric quantity of the Cheeger isoperimetric constant.

\begin{rem}[Comments on the Gaussian kernel and bandwidth parameter]\label{rem:kernel_choice}
In Theorem \ref{thm:main} and Corollary \ref{cor:main}, the kernel $k$ is assumed to be the Gaussian kernel in \eqref{eqn:gaussian_kernel}. However, these exact recovery results do not rely on the particular choice of the Gaussian kernel. Specifically, Theorem \ref{thm:main} and Corollary \ref{cor:main} still hold, as long as the kernel is isotropic and satisfies the exponential decay in the squared Euclidean distance. For  the heat kernel on $\bR$ (i.e., Green's function associated with the heat equation on $\bR$)
\begin{equation}
\label{eqn:heat_kernel_real_line}
H(\tau,x,y) = (4\pi \tau)^{-1/2} e^{-{(x-y)^{2} \over 4\tau}}, \quad x,y \in \bR, 
\end{equation}
it can be viewed as an approximation to the short time dynamics of the heat kernel~\eqref{eqn:heat_kernel_unit_circle} on $\bS^{1}$ in Remark~\ref{rem:comments_condition_main_thm} as $\tau \to 0^{+}$ (cf. Chapter 1 in \cite{Rosenberg1997}). Hence, we can approximate the short time behavior of the heat flow on the compact manifold $\bS^{1}$ by that of the non-compact manifold $\bR$, where the latter is governed by the Gaussian heat kernel on $\bR$. Setting $h^{2} = 2\tau$ in \eqref{eqn:heat_kernel_real_line} and noticing that the normalization $(4\pi \tau)^{-1/2}$ does not affect the results in Theorem \ref{thm:main} and Corollary \ref{cor:main} since the SDP solution in \eqref{eqn:clustering_Kmeans_sdp} is invariant under scaling. Thus the bandwidth parameter in the Gaussian kernel in \eqref{eqn:gaussian_kernel} has the time scale interpretation in terms of the heat flow dynamics, in addition to capturing the local neighborhood geometry of the submanifolds.
\qed
\end{rem}

\subsection{Exact recovery of the regularized and localized diffusion $K$-means}
In this subsection, we extend the exact recovery results to the two variants of the diffusion $K$-means. First, we consider the regularized diffusion $K$-means~\eqref{eqn:clustering_Kmeans_sdp_unknown_K} that does not require knowledge of the true number of clusters $K$.

\begin{thm}[Exact recovery of SDP for regularized diffusion $K$-means]\label{thm:main_adaptive_lambda}
Suppose all conditions in Theorem~\ref{thm:main} are true. In addition, if the regularization parameter satisfies
\begin{align}\label{Eqn:lambda_condition}
C_1\,nt\, \exp\Big\{-\frac{\delta^2}{2h^2}\Big\} &+C_1\,\exp\bigg\{-\min_{1 \leq k \leq K}\Big\{\frac{\nu_{q_k}^2\lambda_1(\cM_k)}{4(q_k+2)(2\pi)^{q_k/2}}\Big\}\, h^2 t\bigg\} < \rho \leq \frac{C_2}{n\max_{1 \leq k \leq K}\{n_kh^{q_k}\}},
\end{align}
then we can achieve exact recovery for $\tilde{Z}$ from the regularized diffusion $K$-means  with probability at least $1- c_3K\,\underline{n}^{-\xi}$.
\end{thm}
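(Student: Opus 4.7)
The approach is to construct a dual certificate for $\tilde{Z} = Z^*$ in parallel with the proof of Theorem~\ref{thm:main}. Rewriting~\eqref{eqn:clustering_Kmeans_sdp_unknown_K} as the minimization of $\langle n\rho I - A, Z\rangle$ over $Z \in \sC$, the KKT conditions at the target $Z^*$ demand dual variables $\alpha \in \bR^n$, a symmetric nonnegative matrix $B$, and a positive semidefinite matrix $Q$ satisfying
\[
Q \,=\, n\rho I - A - \tfrac{1}{2}\bigl(\alpha \mathbf{1}_n^T + \mathbf{1}_n \alpha^T\bigr) - B,
\]
together with complementary slackness $B \odot Z^* = 0$ and $Q Z^* = 0$. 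The crucial structural observation is that for the constrained problem~\eqref{eqn:clustering_Kmeans_sdp} the stationarity equation differs from the above only by an extra term $\beta\, I$ coming from the Lagrange multiplier $\beta$ of the trace constraint $\tr(Z) = K$. A certificate for the regularized problem is thus obtained from the one built in the proof of Theorem~\ref{thm:main} simply by identifying $\beta$ with $n\rho$; no new multiplier is introduced.

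The plan is therefore as follows. First I would verify Slater's condition for~\eqref{eqn:clustering_Kmeans_sdp_unknown_K} so that the KKT system is both necessary and sufficient for optimality. Next, I would reuse the construction of $(\alpha, B, Q)$ from the proof of Theorem~\ref{thm:main}: take $Q$ to be the component of $n\rho I - A$ supported on the block-diagonal subspace orthogonal to the range of $Z^*$, choose $B$ so as to cancel the between-cluster residual of $n\rho I - A$, and adjust $\alpha$ to enforce the row-sum identity. Complementary slackness then holds by construction. It remains to certify that $B \geq 0$ entrywise and $Q \succeq 0$, together with strict complementarity, which upgrades existence of the certificate to uniqueness of $\tilde{Z}$.

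For $B \geq 0$ the between-cluster entries of $A$ must be dominated by $n\rho$; by the between-cluster leakage estimate (Lemma~\ref{lem:between_cluster_random_walk}), these entries are controlled by the first term on the left-hand side of~\eqref{Eqn:lambda_condition} (namely $nt \exp\{-\delta^2/(2h^2)\}$), so the lower bound on $\rho$ in~\eqref{Eqn:lambda_condition} is precisely what is needed. For $Q \succeq 0$ the within-cluster entries of $A$ must remain sufficiently close to their idealized value $N_k^{-1}$ in~\eqref{Eqn:approx_form_A_n} that $n\rho$ does not dominate $\min_k N_k^{-1} \asymp (n\max_k n_k h^{q_k})^{-1}$; this control is supplied by the within-cluster concentration estimate (Lemma~\ref{lem:within_cluster_random_walk}) and matches the upper bound on $\rho$ in~\eqref{Eqn:lambda_condition}.

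The principal difficulty is identical to that in Theorem~\ref{thm:main}: establishing the spectral and entrywise concentration of the empirical diffusion affinity matrix $A = P_n^{2t} D_n^{-1}$ to the block-constant target~\eqref{Eqn:approx_form_A_n}, whose proof rests on the Laplace--Beltrami eigenvalue estimates of~\cite{burago2014graph,trillos2018error} and mixing-time analysis via the spectral gap $\lambda_1(\cM_k)$. These bounds are inherited from Theorem~\ref{thm:main} without modification, so the only new ingredient is verifying that the interval~\eqref{Eqn:lambda_condition} of admissible values of $n\rho$ is nonempty and contains the implicit value $\beta^{*}$ produced by the unregularized argument; this is exactly the content of the hypothesis~\eqref{eqn:exact_recovery_condition_SDP_diffusion_Kmeans_LB_eigenval} inherited from Theorem~\ref{thm:main}. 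Strict complementarity, and hence uniqueness of $\tilde{Z}$, follows from the same strict inequalities in~\eqref{Eqn:lambda_condition}.
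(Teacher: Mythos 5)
Your plan rests on a premise that is not true of this paper: you propose to ``reuse the construction of $(\alpha,B,Q)$ from the proof of Theorem~\ref{thm:main}'', but the paper's proof of Theorem~\ref{thm:main} constructs no dual certificate at all. It is a primal argument: the basic inequality $0\leq\langle A_n,\hat Z-Z^\ast\rangle$, a block decomposition of that inner product, the entrywise bounds of Lemmas~\ref{lem:within_cluster_random_walk} and~\ref{lem:between_cluster_random_walk}, and the feasibility inequalities of Lemma~\ref{lem:some_ineq_feasible_set}. The paper's proof of the regularized version is the same primal argument, with the extra term $n\rho(\tr(Z^\ast)-\tr(\tilde Z))$ in the basic inequality~\eqref{Eqn:basic_inequality_adaptive} absorbed via Lemma~\ref{lem:some_ineq_feasible_set_adaptive}; the trace constraint is used nowhere else, which is why the extension is short. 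So there is no ``implicit value $\beta^\ast$ produced by the unregularized argument'' to identify with $n\rho$, and the certificate would have to be built from scratch.

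If you do build it from scratch, the step you assert but do not prove is exactly the hard one. Complementary slackness forces $QZ^\ast=0$, so certifying $Q\succeq 0$ amounts to showing that $n\rho I-A$ (after the corrections by $\alpha$ and $B$) is positive semidefinite on the $(n-K)$-dimensional orthogonal complement of the cluster indicators. That is a statement about the \emph{operator norm} of the fluctuation of $A$ away from its block-constant target~\eqref{Eqn:approx_form_A_n} restricted to that subspace. The paper only establishes entrywise ($\ell_\infty$) concentration; converting $\|\cdot\|_\infty$ control of order $\varepsilon$ into an operator-norm bound costs a factor of $n$ in general, and it is not checked (and not obvious) that $n\varepsilon$ stays below the spectral margin $n\rho$ available under~\eqref{Eqn:lambda_condition}. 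Likewise, $B\geq 0$ on the off-diagonal blocks requires $-A_{ij}-\tfrac12(\alpha_i+\alpha_j)-Q_{ij}\geq 0$ there, which couples $B$ to the choice of $\alpha$ and $Q$ and is not a direct consequence of Lemma~\ref{lem:between_cluster_random_walk} alone. Dual certificates of this type exist for $K$-means SDPs in the literature, but they require these additional spectral estimates and typically yield conditions that differ from the primal ones; the claim that everything is ``inherited from Theorem~\ref{thm:main} without modification'' is therefore a genuine gap. A correct and much shorter route is the paper's: start from $0\leq\langle A_n,\tilde Z-Z^\ast\rangle+n\rho(\tr(Z^\ast)-\tr(\tilde Z))$, bound the affinity terms exactly as in Theorem~\ref{thm:main}, and use Lemma~\ref{lem:some_ineq_feasible_set_adaptive} to show the penalty term is dominated once $\rho$ lies in the window~\eqref{Eqn:lambda_condition}.
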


\begin{rem}[Implication on our $\rho$ selection Algorithm~\ref{alg1}]
Condition~\eqref{Eqn:lambda_condition}, as a sufficient condition for the exact recovery, provides some justification of our $\rho$ selection Algorithm~\ref{alg1}, in particular, the reason of why using the logarithmic scale. More precisely, we observe from Lemma~\ref{lem:feasibility_SDP_lambda_infinity} that an upper bound for $\rho$ to produce non-trivial clustering is $n^{-1}$ times the largest eigenvalue of the affinity matrix $A$ in the SDP, which is of order $n^{-1}\max_{k}\{n_kh^{q_k}\}^{-1} =\mathcal O((n\log n)^{-1})$ (from the approximating form~\eqref{Eqn:approx_form_A_n} and Lemma~\ref{Lemma:total_degree} in the proof of Theorem~\ref{thm:main}). As a consequence, in the original scale, the interval length of those $\rho$ that underestimates $K$ is of order $(n\log n)^{-1}$, which is comparable to the range of $\rho$ corresponding to exact recovery (correct $K$) implied by~\eqref{Eqn:lambda_condition} as $(n\log n)^{-1}$. On the other hand, the range of $\rho$ corresponding to exact recovery will dominate if we instead consider the logarithmic scale. Precisely, on the logarithmic scale, the interval length for underestimating $K$ is of order $\log n$, while the range of $\log \rho$ implied by~\eqref{Eqn:lambda_condition} becomes $\log n - C'\,\big(\log n - \min\big\{\delta^2/(2h^2), \min_{1 \leq k \leq K}\{\lambda_1(\cM_k)\}\, h^2 t\big\}\big)$, which is of order $\Omega(n^{\iota})$ for some constant $C'>0$ and $\iota>0$ as long as both $\delta^2/h^2$ and $h^2t$ are bounded below by $n^{\iota}$. (The latter requirement can be easily satisfied, for example, in our simulations the choice of $t=n^{1.2}$ is good enough to produce robust results.) In particular, this suggests that the interval length of $\log \rho$ for exact recovery is proportional to $\delta^2/h^2$, which can be viewed as a signal-to-noise ratio characteristic. 
\qed
\end{rem}


Now let us turn to the localized diffusion $K$-means that locally selects the node-wise bandwidth adapting to the local geometric structure.

\begin{thm}[Exact recovery of SDP for localized diffusion $K$-means]\label{thm:main_adaptive_h}
Let $\delta_{kk'}=\|\cM_k-\cM_{k'}\|$. 
Fix any $\xi\geq 1$. Let $c_1,c_2,c_3, c_4, M_0$ be some positive constants only depending on $\{q_k,{\rm Sec_k},{\rm Inj_k}, {\rm Rch_k},\eta_k,{\rm Lip_k}\}_{k = 1}^{K}$, and $C,C',C_1,C_2$ only depending on $\{q_k,{\rm Sec_k},{\rm Inj_k}, {\rm Rch_k},\eta_k$, ${\rm Lip_k}\}_{k = 1}^{K}$ and $\xi$.
If  $\underline n \geq M_0$, the number of neighbor parameter $k_0$ satisfies $k_0=\lfloor C\log n\rfloor$, and $$\delta_{kk'} \geq C'\, \max\{(\log n/ n_k)^{1/q_k},(\log n/ n_{k'})^{1/q_{k'}}\}$$ for each distinct $k,k'\in[K]$, then with probability at least $1- c_3K\,\underline{n}^{-\xi}$, the followings are true. 
\newline
(1)~For each $i\in G^\ast_k$, its local bandwidth parameter $h_i$ satisfies
\begin{align}\label{Eqn:local_h_nounds}
c_1 (\log n/ n_k)^{1/q_k}\leq h_i \leq c_2 (\log n/ n_k)^{1/q_k}.
\end{align}
\newline
(2)~We can achieve exact recovery for $\tilde{Z}$ from the localized diffusion $K$-means as long as
\begin{align}
&C_1\,nt\, \exp\Big\{-c_4\,\Big(\min_{k,k'\in[K]}\frac{\delta_{kk'}}{\max\{(\log n/ n_k)^{1/q_k},(\log n/ n_{k'})^{1/q_{k'}}\}}\Big)^2\Big\}\notag \\
&\qquad\qquad\qquad+C_1\,\exp\Big\{-c_4\, \min_{1 \leq k \leq K}\{\lambda_1(\cM_k)\, (\log n/ n_k)^{2/q_k}\}\, t\Big\} <  \frac{C_2}{n\log n}.\label{eqn:local_condition}
\end{align}
\end{thm}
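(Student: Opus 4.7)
\textbf{Proof plan for Theorem~\ref{thm:main_adaptive_h}.}
The strategy is to prove parts (1) and (2) in sequence: first concentrate the random bandwidths $h_i$ at the population-level scale determined by the local sample size and intrinsic dimension, then condition on this high-probability event and run the same dual-certificate analysis that powered Theorem~\ref{thm:main}, with the single bandwidth $h$ replaced throughout by the cluster-dependent effective bandwidth $h_k \asymp (\log n / n_k)^{1/q_k}$.

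\emph{Step 1 (bandwidth concentration).} Fix $i \in G_k^\ast$. Since $\cM_k$ has bounded sectional curvature, bounded inverse reach, and positive injectivity radius, there exist constants $c,C$ depending only on the geometric parameters of $\cM_k$ such that the Riemannian volume of the geodesic ball of radius $r$ about $X_i$ is sandwiched between $c\, r^{q_k}$ and $C\, r^{q_k}$ for all sufficiently small $r$; combined with the density bounds $\eta_k \leq p_k \leq \eta_k^{-1}$, the $\mu_k$-mass of the Euclidean ball $B(X_i,r)\cap \cM_k$ is of order $r^{q_k}$. A Chernoff / binomial tail bound applied conditionally on $X_i$ shows that, with probability at least $1-n^{-\xi-1}$, the number of sample points from $G_k^\ast\setminus\{i\}$ falling in $B(X_i,r)$ is of order $n_k r^{q_k}$ uniformly on a dyadic grid of radii. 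Setting the count equal to $k_0=\lfloor C\log n\rfloor$ and inverting gives the matching upper and lower bounds~\eqref{Eqn:local_h_nounds}. The separation hypothesis $\delta_{kk'} \geq C'(\log n/\min\{n_k,n_{k'}\})^{1/\min\{q_k,q_{k'}\}}$ guarantees that the $k_0$ nearest neighbors of $X_i$ all lie in $\cM_k$, so the above within-cluster analysis is legitimate. A union bound over $i\in[n]$ completes part~(1).

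\emph{Step 2 (reduction to a block-structured analysis).} Work on the event $\mathcal E$ of Step~1, so that $h_i\in [c_1 h_k, c_2 h_k]$ with $h_k=(\log n/n_k)^{1/q_k}$ for all $i\in G_k^\ast$. Then, within cluster $k$, the modified kernel satisfies
\begin{equation*}
\exp\!\Big(-\tfrac{\|X_i-X_j\|^2}{2c_1^2 h_k^2}\Big)\leq \kappa^{\dagger}(X_i,X_j)\leq \exp\!\Big(-\tfrac{\|X_i-X_j\|^2}{2c_2^2 h_k^2}\Big),\qquad i,j\in G_k^\ast,
\end{equation*}
so that $[\mathcal K_n^\dagger]_{G_k^\ast G_k^\ast}$ is pointwise comparable (up to constants in the exponent) to a homogeneous-bandwidth Gaussian kernel matrix of bandwidth $\asymp h_k$, which is exactly the object analyzed in the proof of Theorem~\ref{thm:main}. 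For $i\in G_k^\ast$, $j\in G_{k'}^\ast$ with $k\neq k'$, the bound $h_i h_j\leq c_2^2\max\{h_k^2,h_{k'}^2\}$ together with $\|X_i-X_j\|\geq \delta_{kk'}$ yields
\begin{equation*}
\kappa^{\dagger}(X_i,X_j)\leq \exp\!\Big(-\tfrac{1}{2c_2^2}\cdot \tfrac{\delta_{kk'}^2}{\max\{h_k^2,h_{k'}^2\}}\Big),
\end{equation*}
which is the off-diagonal decay driving the first term of~\eqref{eqn:local_condition}. This reduces the analysis to a cluster-wise version of the setup in Theorem~\ref{thm:main}, with $h$ replaced by $h_k$ on each cluster.

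\emph{Step 3 (dual certificate and mixing).} With the block decomposition in hand, I would invoke the master SDP exact-recovery condition (the analogue of Lemma~\ref{lem:DKM_SDP_master_bound} used for Theorem~\ref{thm:main}): it suffices to control (i)~the within-cluster deviation $\|[A_n^\dagger]_{G_k^\ast G_k^\ast}-N_k^{-1}\mathbf 1\mathbf 1^T\|_\infty$ and (ii)~the between-cluster leakage $\|[A_n^\dagger]_{G_k^\ast G_{k'}^\ast}\|_\infty$, where $A_n^\dagger=(P_n^{\dagger})^{2t}(D_n^{\dagger})^{-1}$. Term~(ii) is bounded via Lemma~\ref{lem:between_cluster_random_walk} and the between-cluster kernel decay above, producing the first exponential in~\eqref{eqn:local_condition} (where the factor $n t$ accounts for $t$-step random walks and a union bound over pairs). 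Term~(i) is bounded by comparing the restricted random walk on $\cM_k$ to the heat semigroup on $\cM_k$ via the graph-Laplacian comparison theorem used for Theorem~\ref{thm:main}: since the effective bandwidth on $\cM_k$ is $h_k$, the spectral gap of the restricted chain is of order $\lambda_1(\cM_k)\,h_k^2 = \lambda_1(\cM_k)\,(\log n/n_k)^{2/q_k}$, giving the second exponential in~\eqref{eqn:local_condition}. The right-hand side $C_2/(n\log n)$ replaces $C_2/(n\max_k n_k h^{q_k})$ from Theorem~\ref{thm:main} because on $\mathcal E$ one has $n_k h_k^{q_k}\asymp \log n$ for every $k$.

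\emph{Main obstacles.} The delicate point is that the bandwidths $h_i$ are random and data-dependent, so the kernel matrix $\mathcal K_n^\dagger$ has more intricate dependence structure than the fixed-bandwidth one; the conditioning argument in Step~2 must be combined with concentration of the restricted-chain spectrum proved uniformly over bandwidths in the range~\eqref{Eqn:local_h_nounds}, which requires a slight strengthening of the perturbation estimates used in Theorem~\ref{thm:main}. Secondarily, $\mathcal K_n^\dagger$ is no longer positive semidefinite; this does not affect the SDP master bound (which only uses entrywise block structure, as in Remark~\ref{rem:thresholding}), but it does mean that spectral-gap arguments must be applied to the symmetric positive-semidefinite proxy $(D_n^{\dagger})^{-1/2}[\mathcal K_n^{\dagger}]_{G_k^\ast G_k^\ast}(D_n^{\dagger})^{-1/2}$ restricted to each cluster, where the PSD property is recovered up to constants because the within-cluster kernel is sandwiched between two genuine Gaussian kernels.
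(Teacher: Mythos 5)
Your overall architecture coincides with the paper's own proof. For Part (1) the paper also reduces to neighbor-count concentration, but only at two deterministic radii $h_L\asymp h_U\asymp(\log n/n_k)^{1/q_k}$, showing $N(X_i,h_L)\leq k_0/2$ and $N(X_i,h_U)\geq 2k_0$ with high probability (your dyadic grid is an equivalent but heavier device), with the separation hypothesis guaranteeing as you say that all such neighbors lie in $\cM_k$. For Part (2) the paper likewise conditions on the bandwidth event and reruns the Theorem~\ref{thm:main} machinery with $h$ replaced by the cluster-wise scale; the only genuinely new ingredient is the spectral-gap bound for the bandwidth-inhomogeneous chain, which the paper obtains exactly along the lines of your Steps 2--3 via the Markov-chain comparison theorem (Lemma~\ref{lem:comparison_thm}): the one-sided entrywise bound $h_i\geq h_L$ gives $\kappa^{H}(X_i,X_j)\leq\kappa^{IH}(X_i,X_j)$, hence domination of Dirichlet forms, hence $\gamma(P_{n,k}^{IH})\gtrsim\gamma(P_{n,k}^{H})\gtrsim\lambda_1(\cM_k)h_L^2$ after controlling the degree ratios by concentration at $h_L$ and $h_U$.

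The one step of your plan that would fail as written is the final claim that positive semidefiniteness of $[\mathcal K_n^\dagger]_{G_k^\ast G_k^\ast}$ is ``recovered up to constants'' because the kernel is sandwiched entrywise between two genuine Gaussian kernels. Entrywise inequalities between PSD matrices say nothing about the bottom of the spectrum of the matrix in between, so this does not control $\lambda_{\min}$. The point matters because the mixing estimate~\eqref{eqn:mixing_T2} underlying Lemma~\ref{Lem:T_2} requires the \emph{absolute} spectral gap $1-\max\{\lambda_1(P_{n,k}),\,|\lambda_{\min}(P_{n,k})|\}$, while the Dirichlet-form comparison only lower-bounds $1-\lambda_1(P_{n,k})$. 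The paper's fix is elementary and different from yours: either assume without loss of generality that the absolute gap is attained at $\lambda_1$, or replace $P_n$ by the lazy walk $P_n/2+I_n/2$, whose eigenvalues are nonnegative and whose absolute gap is half of $1-\lambda_1(P_{n,k})$, leaving all rates unchanged. With that substitution your outline matches the paper's argument.
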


\begin{rem}[Advantages of the localization]\label{rem:localization}
The first result~ Part (1) in Theorem~\ref{thm:main_adaptive_h} shows that our localized selection scheme via nearest neighbors truly leads to bandwidth adaptation to the unknown submanifold dimension $q_k$ and the unknown true cluster size $n_k$, by only sacrificing a $\log n$ term (from $\log n_k$ to $\log n$ as compared with the optimal bandwidth choice $(\log n_k/ n_k)^{1/q_k}$ from Theorem~\ref{thm:main}).
The second result~Part (2) in Theorem~\ref{thm:main_adaptive_h} indicates the advantages of using the localized node-wise bandwidth, by comparing the condition
~\eqref{eqn:local_condition} with those in Theorem~\ref{thm:main}. In particular, in order for the lower bound condition on the global bandwidth $h$ in Theorem~\ref{thm:main} to hold, the smallest $h$ would be $\max_{k} h_k$, where $h_k=(\log n_k/n_k)^{q_k}$ denotes the optimal bandwidth in the $k$-th cluster $\cM_k$. Note that this lower bound on $h$ is uniformly larger than the magnitudes of localized bandwidth provided in~\eqref{Eqn:local_h_nounds}. As a consequence, this large $h$ would require the same separation condition as $\delta_{kk'} \geq \max_{k}h_k$ for each pair $(\cM_k,\cM_{k'})$ of distinct clusters.
In comparison, the new sufficient condition~\eqref{eqn:local_condition} for exact recovery only needs a cluster-dependent separation condition as $\delta_{kk'} \geq \max\{h_k,h_{k'}\}$, which can be substantially weaker than $\delta_{kk'} \geq \max_{k}h_k$ if clusters are highly unbalanced with unequal sizes and mixed dimensions.
\qed
\end{rem}

Finally, we can further combine the regularized  diffusion $K$-means with local adaptive bandwidths into the \emph{localized and regularized diffusion $K$-means}. The following result is an immediate consequence by combining the proofs of Theorem~\ref{thm:main_adaptive_lambda} and  Theorem~\ref{thm:main_adaptive_h}, and thus its proof is omitted.

\begin{thm}[Exact recovery of SDP for localized and regularized diffusion $K$-means]\label{thm:main_adaptive}
Suppose all conditions in Theorem~\ref{thm:main} and Theorem~\ref{thm:main_adaptive_h} are true. In addition, if the regularization parameter satisfies
\begin{align}
&C_1\,nt\, \exp\Big\{-c_4\,\Big(\min_{k,k'\in[K]}\frac{\delta_{kk'}}{\max\{(\log n/ n_k)^{1/q_k},(\log n/ n_{k'})^{1/q_{k'}}\}}\Big)^2\Big\} \notag\\
&\qquad\qquad\qquad+C_1\,\exp\Big\{-c_4\, \min_{1 \leq k \leq K}\{\lambda_1(\cM_k)\, (\log n/ n_k)^{2/q_k}\}\, t\Big\}< \rho \leq \frac{C_2}{n\log n},\label{Eqn:local_lambda_condition}
\end{align}
then we can achieve exact recovery for $\tilde{Z}$ from the localized and regularized diffusion $K$-means with probability at least $1- c_3K\,\underline{n}^{-\xi}$.
\end{thm}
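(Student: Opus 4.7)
The plan is to combine the dual-certificate construction used to prove Theorem~\ref{thm:main_adaptive_lambda} with the local affinity-matrix bounds used to prove Theorem~\ref{thm:main_adaptive_h}. The two ingredients are essentially orthogonal: the former controls \emph{how} the primal optimum $Z^{*}$ is certified as optimal for the regularized SDP~\eqref{eqn:clustering_Kmeans_sdp_unknown_K}, whereas the latter controls the entrywise behavior of the localized affinity matrix $A^{\dagger}:=(P_{n}^{\dagger})^{2t}(D_{n}^{\dagger})^{-1}$ that enters as the data of the SDP.

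First, I would write down the Karush--Kuhn--Tucker conditions for the regularized SDP with data matrix $A^{\dagger}$ and recover the dual variables exactly as in the proof of Theorem~\ref{thm:main_adaptive_lambda}. Introducing multipliers $\alpha\in\bR^{n}$ for the row-sum constraint $Z\vone_{n}=\vone_{n}$, $B\in\bR_{\geq 0}^{n\times n}$ for entrywise non-negativity, and $U\succeq 0$ for positive semidefiniteness, the stationarity condition is
\begin{equation*}
U \;=\; -A^{\dagger} + n\rho\, I_{n} + \tfrac{1}{2}(\alpha\vone_{n}^{T}+\vone_{n}\alpha^{T}) - B.
\end{equation*}
I would then define $\alpha$ to make the stationarity equation hold within each diagonal block $G_{k}^{*}\times G_{k}^{*}$ (forcing $B=0$ there, consistent with complementary slackness since $Z^{*}_{ij}>0$ on these blocks), and define $B$ on the off-diagonal blocks to absorb $A^{\dagger}$ into a non-negative matrix. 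Complementary slackness $Z^{*}U=0$ is automatic on the range of $Z^{*}$ (spanned by the cluster-indicator vectors), so the only substantive verification is $U\succeq 0$ restricted to the orthogonal complement of that range.

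Second, I would import the local-bandwidth concentration estimates derived in Theorem~\ref{thm:main_adaptive_h}. Part (1) of that theorem guarantees that, on the high-probability event, every $h_{i}$ for $i\in G_{k}^{*}$ lies in $[c_{1}(\log n/n_{k})^{1/q_{k}},\,c_{2}(\log n/n_{k})^{1/q_{k}}]$. This lets me reuse the bounds used there on
\begin{equation*}
\max_{k}\bigl\| [A^{\dagger}]_{G_{k}^{*}G_{k}^{*}} - N_{k}^{-1}\vone_{G_{k}^{*}}\vone_{G_{k}^{*}}^{T}\bigr\|_{\infty}
\quad\text{and}\quad \max_{k\ne m}\bigl\|[A^{\dagger}]_{G_{k}^{*}G_{m}^{*}}\bigr\|_{\infty},
\end{equation*}
where the within-cluster fluctuation is controlled by the second-smallest eigenvalue term $\exp\{-c_{4}\lambda_{1}(\cM_{k})(\log n/n_{k})^{2/q_{k}}t\}$ and the between-cluster decay by $nt\exp\{-c_{4}(\delta_{kk'}/\max\{h_{k},h_{k'}\})^{2}\}$. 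These are precisely the two quantities appearing on the left-hand side of~\eqref{Eqn:local_lambda_condition}.

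Third, with these bounds in hand, I would verify that the choice of $\rho$ from~\eqref{Eqn:local_lambda_condition} makes the dual certificate feasible. The lower bound on $\rho$ dominates the worst between-cluster entry of $A^{\dagger}$, guaranteeing $B\geq 0$ on the off-diagonal blocks; the upper bound on $\rho$ keeps the within-cluster stationarity equation consistent with $U\succeq 0$ on the complement of $\operatorname{range}(Z^{*})$, using the spectral-gap argument from Theorem~\ref{thm:main_adaptive_lambda}. Uniqueness of $Z^{*}$ then follows from the strict versions of these inequalities, exactly as in Lemma~\ref{lem:feasibility_SDP_lambda_infinity} and the end of the proof of Theorem~\ref{thm:main_adaptive_lambda}. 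The main obstacle I anticipate is simply bookkeeping: the localized kernel matrix $\mathcal K_{n}^{\dagger}$ is symmetric but no longer PSD, so one has to be slightly careful that the spectral argument controlling $U\succeq 0$ on the complement of $\operatorname{range}(Z^{*})$ still goes through. This is handled by the same argument used in Theorem~\ref{thm:main_adaptive_h}, which works directly with entrywise bounds on $A^{\dagger}$ rather than spectral structure of $\mathcal K_{n}^{\dagger}$; combining it verbatim with the dual construction of Theorem~\ref{thm:main_adaptive_lambda} completes the proof with the failure probability $c_{3}K\underline n^{-\xi}$ inherited from the union bound over the two high-probability events.
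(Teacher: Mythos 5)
Your high-level plan --- feed the localized affinity bounds of Theorem~\ref{thm:main_adaptive_h} into the optimality argument for the regularized SDP --- is the right one, and is exactly why the paper omits the proof. But the mechanism you propose for the regularized part does not match the paper and is not actually carried out. The proof of Theorem~\ref{thm:main_adaptive_lambda} (Section~\ref{Sec:proof_thm_main_adaptive}) is a \emph{primal} argument: it starts from the basic inequality $0 \leq \langle A_n, \tilde Z - Z^\ast\rangle + n\rho\,(\tr(Z^\ast)-\tr(\tilde Z))$, splits the first term into diagonal and off-diagonal blocks exactly as in Theorem~\ref{thm:main}, and absorbs the trace term via Lemma~\ref{lem:some_ineq_feasible_set_adaptive}. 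There is no dual certificate, no multipliers $\alpha$, $B$, $U$, and no ``spectral-gap argument'' establishing $U \succeq 0$. So you cannot ``recover the dual variables exactly as in the proof of Theorem~\ref{thm:main_adaptive_lambda}'' nor defer the positive-semidefiniteness verification to it: those objects do not exist there, and your appeal to them leaves the hardest step of a KKT proof undischarged. Note also that $Z^\ast U = 0$ is not ``automatic'': it is the condition $U\vone_{G_k^\ast}=0$ for every $k$, which is what determines $\alpha$, and it interacts with the requirement $B\geq 0$ on the off-diagonal blocks; verifying these two simultaneously together with $v^TUv\geq 0$ for $v\perp\mathrm{span}\{\vone_{G_k^\ast}\}$ is precisely the delicate part of every dual-certificate analysis of a $K$-means-type SDP, and none of it is written down.

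The intended (and much shorter) argument is: rerun Section~\ref{Sec:proof_thm_main_adaptive} verbatim with $A_n$ replaced by the localized affinity matrix, substituting for the two entrywise bounds on $\max_{k}\|[A_n]_{G_k^\ast G_k^\ast}-N_k^{-1}\vone_{G_k^\ast}\vone_{G_k^\ast}^T\|_\infty$ and $\max_{k\neq m}\|[A_n]_{G_k^\ast G_m^\ast}\|_\infty$ the versions established inside the proof of Theorem~\ref{thm:main_adaptive_h}, which rest on the sandwich bound~\eqref{Eqn:local_h_nounds} for the local bandwidths $h_i$ and on the inhomogeneous spectral-gap bound $\gamma(P_{n,k}^{IH})\gtrsim \lambda_1(\cM_k)(\log n/n_k)^{2/q_k}$ obtained there by the Markov-chain comparison (Lemma~\ref{lem:comparison_thm}). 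The upper bound $\rho\leq C_2/(n\log n)$ then follows because, with $h_i\asymp(\log n/n_k)^{1/q_k}$ on $G_k^\ast$, Lemma~\ref{Lemma:total_degree} gives $N_k/n_k\asymp n_kh_k^{q_k}\asymp\log n$, so $n^{-1}\min_k\{n_k/N_k\}\asymp(n\log n)^{-1}$. If you wish to keep the dual-certificate route you must supply the construction of $\alpha$ and the verification of $B\geq 0$ and $U\succeq 0$ from scratch; as written, the proposal has a genuine gap at exactly that point.
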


\begin{rem}[Comments on bandwidth adaptivity]
\label{rem:bandwidth_adaptivity}
It is interesting to note the signal separation and random walk mixing of the localized diffusion $K$-means and its regularized version are adaptive to local probability density and local geometric structures of the Riemannian submanifolds. In \cite{Arias-Castro2011_IEEETIT}, nearly-optimal exact recovery of a collection of clustering methods based on pairwise distances of data is derived under a condition that the {\it minimal} signal separation strength over all pairs of submanifolds is larger than a threshold (even for their local scaling version, cf. Proposition 3 therein). Thus results established in \cite{Arias-Castro2011_IEEETIT} are non-adaptive to the local density and (geometric) structures of the submanifolds.  In addition, the $\max_k\{n_k h_k^{q_k}\}$ on the right hand side of \eqref{eqn:exact_recovery_condition_SDP_diffusion_Kmeans_LB_eigenval} now reduces to $\log(n)$ as in \eqref{Eqn:local_lambda_condition}. This means that the localized diffusion $K$-means tends to increase the signal-to-noise ratio (cf.~Remark~\ref{rem:localization}) as well as the upper bound on $\rho$ for exact recovery, thereby widening the interval length of $\log(\rho)$ corresponding to the true clustering structure and improves the performance of the $\rho$ selection Algorithm~\ref{alg1}.
\qed
\end{rem}

%
%

\section{Simulations}
\label{sec:simulations}

In this section, we assess the empirical performance of the diffusion $K$-means on some simulation examples. We generate $n=768$ data points from the following three data generation mechanisms (DGPs). 

\begin{enumeraten}
\item The clustering structure contains three disjoint submanifolds: 
\begin{itemize}
\item $\cM_{1} = \mbox{unit disk}$ and $n/4$ data points are uniformly sampled on $\cM_{1}$,
\item $\cM_{2} = \mbox{annulus with radius } 2.5$ and $n/4$ data points are uniformly sampled on $\cM_{2}$,
\item $\cM_{3} = \mbox{annulus with radius } 4$ and $n/2$ data points are uniformly sampled on $\cM_{3}$, 
\end{itemize}
where all $\cM_{1},\cM_{2},\cM_{3}$ are centered at the origin $(0,0)$. 

\item The clustering structure contains three disjoint rectangles:  
\begin{itemize}
\item $\cM_{1} = \{(-15,-8), (-15,8), (-8,8), (8,8)\}$,
\item $\cM_{2} = \{(10,3), (10,8), (15,3), (15,8)\}$,
\item $\cM_{3} = \{(10,-8), (10,-3), (15,-8), (15,-3)\}$,
\end{itemize}
where data points are uniformly distributed on $\cM_{1} \bigsqcup \cM_{2} \bigsqcup \cM_{3}$. 

\item The clustering structure is a mixture of three bivariate Gaussians: 
\[
\alpha_{1} N(\mu_{1}, \sigma_{1}^{2} \Id_{2}) + \alpha_{2} N(\mu_{2}, \sigma_{2}^{2} \Id_{2}) + \alpha_{3} N(\mu_{3}, \sigma_{3}^{2} \Id_{2}),
\]
where $(\alpha_{1}, \alpha_{2}, \alpha_{3}) = (1/3, 1/3, 1/3)$, $\mu_{1} = (-6,0), \mu_{2} = (0,0), \mu_{3} = (2.5,0)$, $\sigma_{1} = 2$, and $\sigma_{2} = \sigma_{3} = 0.5$. 
\end{enumeraten}

Our simulation setups are similar to \cite{zelnik2005self,NadlerGalun2006_NIPS}. Note that the sampling density in DGP 1 and 2 is uniform on the disjoint submanifolds, the hardness of the problems is mainly determined by the geometry, and we thus expect the diffusion $K$-means and its localized version can both succeed in these two cases. In addition, since DGP 1 contains two annuli that are less connected than the rectangles and ellipsoids, we expect that, for the localized diffusion $K$-means (with self-tuned bandwidths), more random walk steps are needed for DGP 1 to correctly identify the clusters than those for DGP 2 and DGP 3. In our simulation studies, we use $t = n^{2}$ for DPG 1 and $t = n^{1.2}$ for both DGP 2 and DGP 3 (all with local scaling). Further, DGP 3 has a mixture of Gaussian densities, the local scaling is expected to improve the performance of the diffusion $K$-means. In fact, we have observed in Figure~\ref{fig:diffusion_kmeans_local_scaling_demo} that the diffusion $K$-means without local scaling does not work for DGP 2. It is also known that spectral clustering methods fail on such setup \cite{NadlerGalun2006_NIPS}. Thus we do not report results on DGP 2 without local scaling for all competing methods since it does not provide meaningful comparisons with other setups. 

For the SDP relaxed diffusion $K$-means clustering methods, we report the $\ell^{1}$ estimation error of for estimating the true clustering membership $Z^{*}$ and the (normalized) Hamming distance error for classifying the clustering labels. For an SDP estimator $\hat{Z}$, the $\ell^{1}$ estimation error is defined as $n^{-1} \|\hat{Z}-Z^{*}\|_{1}$. To get the clustering labels from the SDP estimator $\hat{Z}$, we extract its top $K$ eigenvectors and perform a $K$-means rounding algorithm to get the estimated partition $\hat{G}_{1},\dots,\hat{G}_{K}$. Then the classification error is defined as $n^{-1} \sum_{k=1}^{K} \|\vone_{\hat{G}_{k}} - \vone_{G^{*}_{k}}\|_{1}$, where $\vone_{G_{k}} = (1_{(X_{1} \in G_{k})},\dots,1_{(X_{n} \in G_{k})})^{T}$.

In each setup, our results are reported on 1,000 simulations. For brevity, DKM stands for the diffusion $K$-means, RDKM for the (nuclear norm) regularized diffusion $K$-means, LDKM for the localized diffusion $K$-means, and LRDKM for the localized and regularized diffusion $K$-means. In the cases of no local scaling, the steps of random walks is fixed as $t = n^{1.2}$ in all setups. In the cases of local scaling, the nearest neighborhood size is chosen as $\lfloor \log{n} \rfloor$ for DPG 1 and DGP 3, and as $\lfloor 0.5 \log{n} \rfloor$ for DGP 2. 

For the comparison purpose, we also include three spectral clustering methods: the unnormalized spectral clustering (SC-UN), the random walk normalized spectral clustering (SC-RWN) \cite{ShiMalik2000_IEEEPAMI}, a symmetrically normalized spectral clustering (SC-NJW) proposed in \cite{NgJordanWeiss2001_NIPS}. For each spectral clustering method, we also consider their localized versions (LSC-UN, LSC-RWN, LSC-NJW) by replacing the kernel matrix $\mathcal K_{n}$ with $\mathcal K^{\dagger}_{n}$. 

We can draw several observations from the simulation studies. First, the estimation error agrees well with our exact recovery theory for SDP relaxed DKM and LDKM, given the number of clusters (cf. Table~\ref{tab:estimation_errors}). Second, all methods works relatively better for DGP 1 and DGP 2 since the separation signal strength is stronger than DGP 3 (cf. Table~\ref{tab:classification_errors}). Third, the RDKM and LRDKM perform well in selecting the true number of clusters (cf. Table~\ref{tab:percentages_correctly_estimated_noc}) using Algorithm~\ref{alg1}.

We also modify DGP 3 to make the problem harder. We consider the mixture of three Gaussian with parameters $(\alpha_{1}, \alpha_{2}, \alpha_{3}) = (1/4, 1/4, 1/2)$, $\mu_{1} = (-6,0), \mu_{2} = (0,0), \mu_{3} = (1.45,0)$, $\sigma_{1} = 2$, and $\sigma_{2} = \sigma_{3} = 0.5$. This setup is denoted as DGP 3'. For DGP 3', LDKM has much smaller classification errors than all spectral methods with local scaling (i.e., LSC-UN, LSC-RWN, LSC-NJW); see last column of Table~\ref{tab:classification_errors}.

 \begin{table}[h!]
  \begin{center}
    \caption{$\ell^{1}$ estimation errors of the SDP solutions of various diffusion $K$-means clustering methods.}
    \label{tab:estimation_errors}
    \begin{tabular}{c|ccc}
    \hline
      \multirow{2}{*}{Method} & \multicolumn{3}{c}{Estimation error} \\
      & DGP=1 & DGP=2 & DGP=3 \\
    \hline
     DKM & $4.7642 \times 10^{-6}$ & $3.3258 \times 10^{-4}$ & {\bf --} \\
     LDKM & $5.2835 \times 10^{-5}$ & 0.0049 & 0.0451 \\
    \hline  
    \end{tabular}
  \end{center}
\end{table}

\begin{table}[h!]
  \begin{center}
    \caption{Classification errors of various diffusion $K$-means and spectral clustering methods.}
    \label{tab:classification_errors}
    \begin{tabular}{c|cccc}
    \hline
      \multirow{2}{*}{Method} & \multicolumn{4}{c}{Classification error} \\
      & DGP=1 & DGP=2 & DGP=3 & DGP=3' \\
    \hline
     DKM & 0 & $1.3021 \times 10^{-4}$ & {\bf --} & {\bf --} \\
     SC-UN & 0 & 0 & {\bf --} & {\bf --} \\
     SC-RWN & 0 & 0 & {\bf --} & {\bf --} \\
     SC-NJW & 0 & 0 & {\bf --}  & {\bf --} \\
     LDKM & 0 & 0.0018 & 0.0086 & 0.0594 \\
     LSC-UN & 0 & $4.7917 \times 10^{-4}$ & 0.0098 & 0.0801 \\
     LSC-RWN & 0 & 0.0016 & 0.0105 & 0.0802 \\
     LSC-NJW & 0 & 0.0399 & 0.0084 & 0.0884 \\
    \hline  
    \end{tabular}
  \end{center}
\end{table}

\begin{table}[h!]
  \begin{center}
    \caption{Percentages of correctly estimated number of clusters by the regularized diffusion $K$-means and its local scaling version using Algorithm~\ref{alg1} among 1,000 simulations.}
    \label{tab:percentages_correctly_estimated_noc}
    \begin{tabular}{c|ccc}
    \hline
   Method  & DGP=1 & DGP=2 & DGP=3 \\
    \hline
   RDKM  & 94.30\% & 95.10\% & {\bf --} \\
   LRDKM  & 99.20\% & 83.10\% & 97.70\% \\
    \hline  
    \end{tabular}
  \end{center}
\end{table}



\section{Proofs}
\label{sec:proofs}

Recall that $n_k=|G_k^\ast|$ is the size of $k$-th true cluster index set $G_k^\ast$, and let $N_k = \sum_{i,j\in G_k^\ast} \kappa(X_i,\,X_j)$ denote the total within-weight in $G_k^\ast$. For any subset $G\subset [n]$, we use $\mathbf{1}_{G}$ to denote the all-one vector whose size equal to the size of $G$. 

\subsection{Proof of Theorem~\ref{thm:main}}\label{Sec:proof_thm_main}
For simplicity of notation, we use $A:=A_n$ to denote the empirical diffusion affinity matrix $P_n^{2t}D_n^{-1}$ in the proof, and recall
\[
\begin{gathered}
\hat{Z} = \argmax \left\{ \langle A, Z \rangle : Z \in \sC_K \right\} \\
\qquad \mbox{with } \sC_K = \{Z \in \bR^{n \times n} : Z^{T} = Z, Z \succeq 0, \tr(Z) = K, Z \vone_{n} = \vone_{n}, Z \geq 0 \}.
\end{gathered}
\]
At a high level, our strategy is to show that for suitably large $t\in\bN_{+}$, the matrix $A_n$ tends to become close to a block-diagonal matrix, 
where each diagonal block tends to be a constant matrix (cf. equation~\eqref{Eqn:approx_form_A_n}). 
Based on this approximation, we expect the global optimum $\hat Z$ to share a similar block-diagonal structure, thereby recovers the true membership matrix $Z^\ast$ in~\eqref{eqn:Kmeans_true_membership_matrix} which takes the form of
\begin{align*}
Z^\ast = \begin{pmatrix} 
\displaystyle \frac{1}{n_1} \mathbf{1}_{G_1^\ast} \mathbf{1}^T_{G_1^\ast} & 0 & \cdots & 0\\
0 & \displaystyle \frac{1}{n_2} \mathbf{1}_{G_2^\ast} \mathbf{1}^T_{G_2^\ast} & \cdots & 0\\
\vdots & \vdots & \ddots & \vdots \\
0 & \cdots & 0 & \displaystyle \frac{1}{n_K} \mathbf{1}_{G_K^\ast} \mathbf{1}^T_{G_K^\ast}
\end{pmatrix}.
\end{align*}

To put this intuition in a technical form, since $Z^\ast$ defined in~\eqref{eqn:Kmeans_true_membership_matrix} is also a feasible solution belonging to the convex set $\sC_K$, we have by the optimality of $\hat Z$ that
\begin{align}\label{Eqn:basic_ineq}
0\leq \langle A_n,\, \hat Z - Z^\ast\rangle =\sum_{1\leq k\neq m\leq K} \big\langle\, [A_n]_{G_k^\ast G_m^\ast},\, [\hat Z - Z^\ast]_{G_k^\ast G_m^\ast}\,\big\rangle + \sum_{k=1}^K \big\langle\, [A_n]_{G_k^\ast G_k^\ast},\, [\hat Z - Z^\ast]_{G_k^\ast G_k^\ast}\,\big\rangle.
\end{align}
We analyze the two sums separately as follows.

\vspace{0.5em}
\noindent {\bf The first sum:}  By noticing that $Z^\ast_{G_k^\ast G_m^\ast}$ is a zero matrix for each pair $k\neq m \in [K]$, we have the following bound
\begin{align}
&\sum_{1\leq k\neq m\leq K} \big\langle\, [A_n]_{G_k^\ast G_m^\ast},\, [\hat Z - Z^\ast]_{G_k^\ast G_m^\ast}\,\big\rangle \notag \\
= &\,\sum_{1\leq k\neq m\leq K} \big\langle\, [A_n]_{G_k^\ast G_m^\ast},\, \hat Z_{G_k^\ast G_m^\ast}\,\big\rangle 
\leq  \max_{1\leq k\neq m\leq K} \|[A_n]_{G_k^\ast G_m^\ast}\|_\infty \, \sum_{1\leq k\neq m\leq K} \|\hat Z_{G_k^\ast G_m^\ast}\|_1,\label{eqn:k_neq_m}
\end{align}
where the leading factor $\max_{k\neq m} \|[A_n]_{G_k^\ast G_m^\ast}\|_\infty$ is expected to be small due to the approximating structure~\eqref{Eqn:approx_form_A_n}.

\vspace{0.5em}
\noindent {\bf The second sum:} Since we expect the $k$th block $[A_n]_{G_k^\ast G_k^\ast}$ of $A_n$ in the diagonal to be close to $N_k^{-1} \mathbf{1}_{G_k^\ast} \mathbf{1}^T_{G_k^\ast}$ (cf.~\eqref{Eqn:approx_form_A_n}), we can subtract and add the same term to decompose it into
\begin{align*}
&\sum_{k=1}^K \big\langle\, [A_n]_{G_k^\ast G_k^\ast},\, [\hat Z - Z^\ast]_{G_k^\ast G_k^\ast}\,\big\rangle\\
= &\, \sum_{k=1}^K \big\langle\, [A_n]_{G_k^\ast G_k^\ast} - N_k^{-1} \mathbf{1}_{G_k^\ast}\mathbf{1}_{G_k^\ast}^T,\, [\hat Z - Z^\ast]_{G_k^\ast G_k^\ast}\,\big\rangle
+ \sum_{k=1}^K N_k^{-1} \,\big\langle\, \mathbf{1}_{G_k^\ast}\mathbf{1}_{G_k^\ast}^T,\, [\hat Z - Z^\ast]_{G_k^\ast G_k^\ast}\,\big\rangle.
\end{align*}
The first term on the right hand side can be bounded by applying H\"{o}lder's inequality,
\begin{align*}
\sum_{k=1}^K& \big\langle\, [A_n]_{G_k^\ast G_k^\ast} - N_k^{-1} \mathbf{1}_{G_k^\ast}\mathbf{1}_{G_k^\ast}^T,\, [\hat Z - Z^\ast]_{G_k^\ast G_k^\ast}\,\big\rangle \\
&\,\leq \max_{1 \leq k \leq K} \big\|\,[A_n]_{G_k^\ast G_k^\ast}- N_k^{-1} 1_{G_k^\ast}1_{G_k^\ast}^T\,\big\|_\infty \, \sum_{k=1}^K \big\|\, [\hat Z -Z^\ast]_{G_k^\ast G_k^\ast}\big\|_1,
\end{align*}
where again the leading factor $\max_{k} \big\|\,[A_n]_{G_k^\ast G_k^\ast}- N_k^{-1} \mathbf{1}_{G_k^\ast}\mathbf{1}_{G_k^\ast}^T\,\big\|_\infty $ is expected to be small due to the approximating structure~\eqref{Eqn:approx_form_A_n}.
Now consider the second term. By the definition of $\sC_K$, the sums of entries in each row of $\hat Z$ and $Z^\ast$ are equal (to one), we have for fixed $k\in[K]$ and each $i\in G^\ast_k$, 
\begin{align*}
\sum_{j\in G_k^\ast} [\hat Z - Z^\ast]_{ij} + \sum_{j\not\in G_k^\ast} [\hat Z - Z^\ast]_{ij}=0.
\end{align*} 
Since $Z^\ast_{ij}=0$ for each pair $(i,j)$ with $i\in G_k^\ast$ and $j\notin G_k^\ast$, and $\hat Z$ has nonnegative entries, we can sum up the preceding display over all $i\in G_k^\ast$ to obtain 
\begin{align*}
\big\langle\, \mathbf{1}_{G_k^\ast}\mathbf{1}_{G_k^\ast}^T,\, [\hat Z - Z^\ast]_{G_k^\ast G_k^\ast}\,\big\rangle = - \sum_{m:\,m\neq k} \|\hat Z_{G_k^\ast G_m^\ast}\|_1,\quad \forall k\in[K].
\end{align*}
Putting pieces together, we obtain
\begin{equation}\label{Eqn:k_eq_m}
\begin{aligned}
&\sum_{k=1}^K \big\langle\, [A_n]_{G_k^\ast G_k^\ast},\, [\hat Z - Z^\ast]_{G_k^\ast G_k^\ast}\,\big\rangle  \\
\leq &\, \max_{1 \leq k \leq K} \big\|\,[A_n]_{G_k^\ast G_k^\ast}- N_k^{-1} \mathbf{1}_{G_k^\ast}\mathbf{1}_{G_k^\ast}^T\,\big\|_\infty \, \sum_{k=1}^K \big\|\, [\hat Z -Z^\ast]_{G_k^\ast G_k^\ast}\big\|_1- \sum_{1\leq k\neq m\leq K} \frac{1}{N_k}\|\hat Z_{G_k^\ast G_m^\ast}\|_1. 
\end{aligned}
\end{equation}
Now by combining inequalities~\eqref{Eqn:basic_ineq}, \eqref{eqn:k_neq_m} and~\eqref{Eqn:k_eq_m}, we can reach the following inequality
\begin{equation}\label{Eqn:key_ineq}
\begin{aligned}
&\sum_{1\leq k\neq m\leq K} \frac{1}{N_k}\|\hat Z_{G_k^\ast G_m^\ast}\|_1 \leq  \max_{1 \leq k\neq m \leq K} \|[A_n]_{G_k^\ast G_m^\ast}\|_\infty \, \sum_{1\leq k\neq m\leq K} \|\hat Z_{G_k^\ast G_m^\ast}\|_1\\
&\qquad \qquad\qquad +\max_{1 \leq k \leq K} \big\|\,[A_n]_{G_k^\ast G_k^\ast}- N_k^{-1} \mathbf{1}_{G_k^\ast}\mathbf{1}_{G_k^\ast}^T\,\big\|_\infty \, \sum_{k=1}^K \big\|\, [\hat Z -Z^\ast]_{G_k^\ast G_k^\ast}\big\|_1.
\end{aligned}
\end{equation}
Since $\hat{Z} \in \sC_{K} \subset \sC$, according to inequalities~\eqref{eqn:ineq_1_feasible_set}-\eqref{eqn:ineq_3_feasible_set} in Lemma~\ref{lem:some_ineq_feasible_set} in Appendix C, 
\begin{align}
\label{eqn:DKM_SDP_another_core_inequality}
\sum_{1\leq k\neq m\leq K} \frac{1}{n_k}\|\hat Z_{G_k^\ast G_m^\ast}\|_1 \geq \frac{1}{n}\,\big\|\hat Z - Z^\ast\big\|_1.
\end{align}
The last two displays~\eqref{Eqn:key_ineq} and~\eqref{eqn:DKM_SDP_another_core_inequality} imply the exact recovery $\hat Z=Z^\ast$ as long as
\begin{equation}
\label{eqn:DKM_SDP_exact_recover_master_condition}
\max_{1 \leq k\neq m \leq K} \|[A_n]_{G_k^\ast G_m^\ast}\|_\infty + \max_{1 \leq k \leq K} \big\|\,[A_n]_{G_k^\ast G_k^\ast}- N_k^{-1} \mathbf{1}_{G_k^\ast}\mathbf{1}_{G_k^\ast}^T\,\big\|_\infty < \frac{1}{n}\,\min_{1\leq k\leq K}\Big\{\frac{n_k}{N_k}\Big\}.
\end{equation}

\begin{lem}[Master condition for the diffusion $K$-means to achieve exact recovery]
\label{lem:DKM_SDP_master_bound}
If~\eqref{eqn:DKM_SDP_exact_recover_master_condition} holds, then we can achieve exact recovery $\hat{Z} = Z^{*}$.
\end{lem}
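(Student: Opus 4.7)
The plan is to exploit the optimality of $\hat Z$ together with the (approximate) block-diagonal structure of $A_n$ described in~\eqref{Eqn:approx_form_A_n} to derive a single linear inequality in $\|\hat Z - Z^*\|_1$ whose coefficient becomes strictly negative under the master condition~\eqref{eqn:DKM_SDP_exact_recover_master_condition}, forcing $\hat Z = Z^*$. The main bookkeeping device is to split the inner product $\langle A_n, \hat Z - Z^*\rangle$ by the block indices $(G_k^*, G_m^*)$ induced by the true partition, and to handle the off-diagonal and diagonal pieces separately.

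First, I would observe that $Z^* \in \sC_K$, so optimality of $\hat Z$ gives the basic inequality $\langle A_n, \hat Z - Z^*\rangle \geq 0$. I would then split this into a sum over off-diagonal blocks and a sum over diagonal blocks. For the off-diagonal blocks, $Z^*_{G_k^* G_m^*} = 0$, so H\"older's inequality yields an upper bound of $\max_{k\neq m}\|[A_n]_{G_k^* G_m^*}\|_\infty \cdot \sum_{k\neq m}\|\hat Z_{G_k^* G_m^*}\|_1$, with the leading factor small because entries of $A_n$ between distinct submanifolds decay exponentially in $\delta/h$. For the diagonal blocks, the trick is to add and subtract $N_k^{-1}\mathbf{1}_{G_k^*}\mathbf{1}_{G_k^*}^T$ inside each block, producing a perturbation term controlled by $\max_k \|[A_n]_{G_k^* G_k^*} - N_k^{-1}\mathbf{1}_{G_k^*}\mathbf{1}_{G_k^*}^T\|_\infty \cdot \sum_k \|[\hat Z - Z^*]_{G_k^* G_k^*}\|_1$, plus an ``ideal'' term $\sum_k N_k^{-1} \langle \mathbf{1}_{G_k^*}\mathbf{1}_{G_k^*}^T, [\hat Z - Z^*]_{G_k^* G_k^*}\rangle$.

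The key observation, and the engine of the whole argument, is that the ideal term is strictly negative and proportional to the off-diagonal mass. Since both $\hat Z$ and $Z^*$ lie in $\sC_K$ their rows sum to one, so for each $i \in G_k^*$ we have $\sum_{j \in G_k^*}[\hat Z - Z^*]_{ij} = -\sum_{j \notin G_k^*}[\hat Z - Z^*]_{ij} = -\sum_{j \notin G_k^*}\hat Z_{ij}$, using that $Z^*$ vanishes off its diagonal blocks and $\hat Z \geq 0$. Summing over $i \in G_k^*$ gives the identity $\langle \mathbf{1}_{G_k^*}\mathbf{1}_{G_k^*}^T, [\hat Z - Z^*]_{G_k^* G_k^*}\rangle = -\sum_{m\neq k}\|\hat Z_{G_k^* G_m^*}\|_1$. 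Plugging this back into the basic inequality and rearranging produces~\eqref{Eqn:key_ineq}: the weighted off-diagonal mass $\sum_{k\neq m} N_k^{-1}\|\hat Z_{G_k^* G_m^*}\|_1$ must be dominated by the two approximation-error terms above.

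It only remains to convert this into a bound on $\|\hat Z - Z^*\|_1$ itself. For this I would invoke the feasibility-set inequalities of Lemma~\ref{lem:some_ineq_feasible_set} (Appendix~C), which give both $\sum_{k\neq m} n_k^{-1}\|\hat Z_{G_k^* G_m^*}\|_1 \geq n^{-1}\|\hat Z - Z^*\|_1$ and control the diagonal-difference term $\sum_k \|[\hat Z - Z^*]_{G_k^* G_k^*}\|_1$ by the off-diagonal mass, as summarized in~\eqref{eqn:DKM_SDP_another_core_inequality}. Combining the resulting two-sided bounds, the strict master condition~\eqref{eqn:DKM_SDP_exact_recover_master_condition} forces the coefficient multiplying $\|\hat Z - Z^*\|_1$ to be strictly negative, so $\|\hat Z - Z^*\|_1 = 0$ and $\hat Z = Z^*$. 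The part of the argument that required care is the algebraic step going from the block-indexed inequality~\eqref{Eqn:key_ineq} to~\eqref{eqn:DKM_SDP_another_core_inequality}; once that is isolated into a clean deterministic lemma, the remaining work (namely controlling $\max_{k\neq m}\|[A_n]_{G_k^* G_m^*}\|_\infty$ and $\max_k \|[A_n]_{G_k^* G_k^*} - N_k^{-1}\mathbf{1}\mathbf{1}^T\|_\infty$ in probability) is the substantive analytic content handled elsewhere via the mixing-time and separation estimates for the random walk $\cW_n$.
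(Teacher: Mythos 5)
Your proposal is correct and follows essentially the same route as the paper: the basic inequality $\langle A_n,\hat Z - Z^*\rangle \ge 0$ from optimality, the block decomposition with the row-sum identity $\langle \mathbf{1}_{G_k^*}\mathbf{1}_{G_k^*}^T,[\hat Z - Z^*]_{G_k^*G_k^*}\rangle = -\sum_{m\neq k}\|\hat Z_{G_k^*G_m^*}\|_1$ leading to~\eqref{Eqn:key_ineq}, and the lower bound~\eqref{eqn:DKM_SDP_another_core_inequality} from Lemma~\ref{lem:some_ineq_feasible_set}, after which the strict master condition forces $\|\hat Z - Z^*\|_1=0$. The paper phrases the last step as a contradiction while you phrase it as a sign argument on the coefficient of $\|\hat Z - Z^*\|_1$, but the content is the same.
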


\begin{proof}[Proof of Lemma~\ref{lem:DKM_SDP_master_bound}]
Suppose $\hat Z \neq Z^{*}$. Then $\|\hat{Z}-Z^{*}\|_{1} > 0$. From~\eqref{Eqn:key_ineq} and~\eqref{eqn:DKM_SDP_exact_recover_master_condition}, we have
\begin{align*}
\sum_{1\leq k\neq m\leq K} \frac{1}{N_k}\|\hat Z_{G_k^\ast G_m^\ast}\|_1 < & \frac{1}{n}\,\min_{1\leq k\leq K}\Big\{\frac{n_k}{N_k}\Big\} \Big\{ \, \sum_{1\leq k\neq m\leq K} \|\hat Z_{G_k^\ast G_m^\ast}\|_1 + \sum_{k=1}^K \big\|\, [\hat Z -Z^\ast]_{G_k^\ast G_k^\ast}\big\|_1 \Big\} \\
= & \,\min_{1\leq k\leq K}\Big\{\frac{n_k}{N_k}\Big\} \frac{1}{n}\|\hat{Z}-Z^{*}\|_{1},
\end{align*}
where the last equality follows from $Z^{*}_{G_k^\ast G_m^\ast} = 0$ for $m \neq k$. From~\eqref{eqn:DKM_SDP_another_core_inequality},
\[
\min_{1\leq k\leq K}\Big\{\frac{n_k}{N_k}\Big\} {1 \over n} \|\hat{Z}-Z^{*}\|_{1} \leq \sum_{1\leq k\neq m\leq K} \frac{1}{N_k}\|\hat Z_{G_k^\ast G_m^\ast}\|_1.
\]
Combining the last two inequalities, we get a contradiction. So we must have $\hat{Z} = Z^{*}$.
\end{proof}

To further proceed, we will make use of following two lemmas to provide high probability bounds for the empirical diffusion affinity entries deviating from their expectations. Proofs of Lemma~\ref{lem:within_cluster_random_walk} and~\ref{lem:between_cluster_random_walk} are deferred to the following subsections.

\begin{lem}\label{lem:within_cluster_random_walk}
Let $\kappa = \max_{1\leq k\neq k'\leq K}\sup_{x\in \cM_k,\,x'\in \cM_{k'}} \kappa(x,\,x')$ and $\tau = \inf_{x,y\in S:\, \|x-y\| \leq h} \kappa(x,y)$. If $c\, (\log n_k/ n_k)^{1/q_k}\,(\log n_k)^{\mathbf 1(q_k=2)/4}\leq h$, then for any $\xi\geq 1$ and any $n_k\geq M_0$,
\begin{align*}
\big\|\,[A_n]_{G_k^\ast G_k^\ast}- N_k^{-1} \mathbf{1}_{G_k^\ast}\mathbf{1}_{G_k^\ast}^T\,\big\|_\infty \leq C'\,n\,t\,\kappa\, (n_k\,h^{q_k})^{-2}+ C'\, (n_k\,h^{q_k})^{-1}\, e^{-2t\,\gamma(P_{n,k})}
\end{align*}
holds with probability at least $1-c_{1} \, n_{k}^{-\xi}$, where the spectral gap $\gamma(P_{n,k})$ satisfies
\begin{align}\label{Eqn:Spectral_gap_lower}
\gamma(P_{n,k}) \geq \bigg(1- C\,\Big(c^{-1}+(\sqrt{\lambda_1(D_k)} + 1)\,h+ h^2\Big)\bigg)\,\frac{\nu_{q_k}^2}{4(q_k+2)(2\pi)^{q_k/2}}\,\tau\, \lambda_1(\cM_k)\,h^2.
\end{align}
Here, $\nu_{d}=\frac{\pi^{d/2}}{\Gamma(d/2+1)}$ is the volume of the $d$-dim unit ball, $\lambda_1(\cM_k)>0$ is the second smallest eigenvalue of the Laplace-Beltrami operator on $\cM_k$ (cf.~Section~\ref{subsec:Laplace-Beltrami_operator}), constants $(c_1,M_0)$ depend only on the submanifold $\cM_k$ through $\{q_k,{\rm Diam_k}, {\rm  Sec_k}$, ${\rm Inj_k}, {\rm Rch_k}, {\rm Rch_k}\}$ and the density $p_k$ through $\{\eta_k,{\rm Lip_k}\}$, and constants $(C,C')$ depend only on $\{q_k,{\rm Sec_k},{\rm Inj_k}, {\rm Rch_k}$, $\eta_k,{\rm Lip_k},\xi\}$.
\end{lem}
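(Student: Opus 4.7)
The plan is to decouple the two sources of error: inter-cluster \emph{leakage} of the random walk and intra-cluster \emph{mixing}. First, I would introduce a companion \emph{restricted} Markov chain $P_{n,k}$ on the subsample $\{X_i:i\in G^\ast_k\}$ defined by $P_{n,k}(x,y)=\kappa(x,y)/d_{n,k}(x)$ with $d_{n,k}(x)=\sum_{j\in G^\ast_k}\kappa(x,X_j)$. Its reversible stationary distribution is $\pi_{n,k}(x)=d_{n,k}(x)/N_k$, and by the spectral decomposition of reversible chains (Lemma \ref{lem:spectral_decomposition_Markov_chain} in the empirical version),
\begin{equation*}
\bigl[P_{n,k}^{2t}D_{n,k}^{-1}\bigr](x,y)=\tfrac{1}{N_k}+\sum_{j\geq 1}\lambda_{n,k,j}^{2t}\,\psi_{n,k,j}(x)\psi_{n,k,j}(y),
\end{equation*}
whose $\ell_\infty$ tail contribution is bounded, via $\|\psi_{n,k,j}\|_\infty\lesssim (n_k h^{q_k})^{-1/2}$ (which follows from unit $L^2(\mathrm{diag}(D_{n,k}))$ normalization and the degree lower bound in Lemma \ref{Lemma:total_degree}), by $(n_k h^{q_k})^{-1}\,e^{-2t\gamma(P_{n,k})}$. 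This accounts for the second term in the claimed bound.

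Next, I would control the discrepancy between the full-walk matrix $[A_n]_{G^\ast_k G^\ast_k}$ and its restricted surrogate $P_{n,k}^{2t}D_{n,k}^{-1}$. Writing $P_n=P_{n,k}\oplus(\mbox{rest})+E_k$ where $E_k$ collects cross-cluster transition probabilities of order $\kappa/(n_k h^{q_k})$ (using $d_n(x)\asymp n_kh^{q_k}$ within cluster $k$ by Lemma \ref{Lemma:total_degree}), one expands $P_n^{2t}=P_{n,k}^{2t}+\sum_{s=0}^{2t-1}P_n^{s}E_k P_{n,k}^{2t-1-s}$ and similarly for the symmetric diagonal normalization. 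Bounding each summand in $\ell_\infty$ via the entrywise bound $\|E_k\|_\infty\lesssim \kappa/(n_k h^{q_k})$ together with $\|P_n^sv\|_\infty\leq\|v\|_\infty$ (a stochastic matrix is an $\ell_\infty$ contraction) yields the leakage contribution of order $n\,t\,\kappa/(n_kh^{q_k})^2$; the extra factor $n$ comes from converting Manhattan-style telescoping to a uniform bound on the cross block and absorbing the between-cluster degree of nodes summed over all $n-n_k$ points outside the cluster. This matches the first term of the lemma.

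The spectral gap lower bound \eqref{Eqn:Spectral_gap_lower} is the main obstacle and will be obtained by importing the graph-Laplacian-to-manifold-Laplacian comparison theorems of Burago--Ivanov--Kurylev and Trillos--Slep\v{c}ev cited in the paper. Specifically, the normalized graph Laplacian $I-P_{n,k}$ of the random geometric graph built on $n_k$ samples from $\cM_k$ with bandwidth $h$ converges, after appropriate rescaling by $h^2$, to the drift Laplace-Beltrami operator $\Delta_{\cM_k,p_k}$ defined in \eqref{eqn:drift_Laplace-Beltrami}. The rescaling constant $\nu_{q_k}/(q_k+2)$ is the second moment of the uniform density on the $q_k$-dimensional unit ball (arising in Taylor-expanding the kernel integrals), and the factor $(2\pi)^{-q_k/2}$ absorbs the Gaussian normalization. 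Combining the min--max characterization of $\gamma(P_{n,k})$ with the known convergence rate gives a relative perturbation of order $c^{-1}+(\sqrt{\lambda_1(D_k)}+1)h+h^2$, where the lower-bound condition on $h$ ensures the transport-cost $W_\infty$ between the empirical and true measures is small (this is precisely where $(\log n_k/n_k)^{1/q_k}(\log n_k)^{\mathbf{1}(q_k=2)/4}$ enters, coming from sup-norm empirical process bounds on $\cM_k$). The extra factor $\tau=\inf_{\|x-y\|\leq h}\kappa(x,y)$ (bounded below by $e^{-1/2}$ for the Gaussian kernel) appears because the comparison theorem requires an effective edge-weight lower bound within an $h$-neighborhood; a classical Markov-chain comparison argument (Diaconis--Saloff-Coste) then lets one replace the constant-weight reference chain by our kernel chain.

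The hardest piece is making the spectral-gap comparison quantitative in the finite-sample regime while simultaneously tracking how $h$, $n_k$, and the geometry of $\cM_k$ (through $\mathrm{Inj}_k$, $\mathrm{Sec}_k$, $\mathrm{Rch}_k$) enter the constants. The delicate step is ensuring that the perturbative term $c^{-1}+(\sqrt{\lambda_1(D_k)}+1)h+h^2$ is strictly less than $1$ under condition \eqref{eqn:bandwidth_condition_nonadaptive}; this requires choosing the constant $c_1$ large enough so the $W_\infty$-error incurred by replacing the population measure $\mu_k$ with its empirical counterpart is negligible compared to $h$, which is precisely the source of the logarithmic factor in dimension $q_k=2$. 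Once these two quantitative ingredients (leakage control and spectral-gap transfer) are in place, the final bound follows from a clean triangle inequality between $[A_n]_{G^\ast_kG^\ast_k}$, $P_{n,k}^{2t}D_{n,k}^{-1}$, and $N_k^{-1}\mathbf{1}_{G^\ast_k}\mathbf{1}_{G^\ast_k}^T$.
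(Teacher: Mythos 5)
Your proposal is correct and follows essentially the same route as the paper: decompose via the restricted chain $P_{n,k}$ into a leakage term (the paper controls it by conditioning on the exit time, which is equivalent to your telescoping expansion and likewise yields the factor $2t\cdot n\kappa\max_i d_{n,k}^{-2}(X_i)$) and a mixing term controlled by the spectral gap, which is then lower-bounded exactly as you describe by a Dirichlet-form comparison (with $B=(N_k/d_k^\dagger)\tau^{-1}$) against the unweighted $h$-neighborhood graph whose normalized Laplacian spectrum is transferred to $\lambda_1(\cM_k)$ via Burago--Ivanov--Kurylev/Trillos--Slep\v{c}ev, followed by degree concentration. The only point to tighten is your bound on the spectral tail: summing the per-eigenfunction sup bounds over $j\geq 1$ loses a factor of $n_k$, and you instead need Cauchy--Schwarz in $j$ together with $\sum_j d_{n,k}(x)\psi_{n,k,j}(x)^2=1$ (this is precisely the proof of the standard mixing inequality the paper invokes).
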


\begin{lem}\label{lem:between_cluster_random_walk}
Let $\kappa = \max_{1\leq k\neq k'\leq K}\sup_{x\in \cM_k,\,x'\in \cM_{k'}} \kappa(x,\,x')$. Suppose conditions~\eqref{eqn:bandwidth_condition_nonadaptive} and~\eqref{eqn:exact_recovery_condition_SDP_diffusion_Kmeans_LB_eigenval} in Theorem~\ref{thm:main} are satisfied. Then for any $\xi\geq 1$, it holds with probability at least $1 - c_1 K\, \underline{n}^{-\xi}$ that
\begin{align*}
\|[A_n]_{G_k^\ast G_m^\ast}\|_\infty \leq   C\, t \, (n_k h^{q_k}\,n_m h^{q_m})^{-1}\,n\,\kappa,
\quad\forall k\neq m \in[K],
\end{align*}
where recall $\underline n = \min_{k\in [K]} n_k$ and $C$ is a constant only depending on $\{q_k,{\rm Sec_k},{\rm Inj_k}, {\rm Rch_k}$, $\eta_k,{\rm Lip_k}\}_{k=1}^K$ and $\xi$. 
\end{lem}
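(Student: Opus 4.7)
The plan is to bound $[A_n]_{ij}=P_n^{2t}(X_i,X_j)/d_n(X_j)$ for $i\in G_k^\ast$ and $j\in G_m^\ast$ (with $k\neq m$) in two pieces: a uniform lower bound on the empirical degrees, which controls both the denominator $d_n(X_j)$ and the one-step escape probability from $G_k^\ast$; and an upper bound on the $2t$-step transition probability $P_n^{2t}(X_i,X_j)$ obtained by decomposing any such path according to the first time it leaves $G_k^\ast$.

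First, I would import the concentration ingredients already used in the proof of Lemma~\ref{lem:within_cluster_random_walk}---Bernstein-type bounds for $d_n(X_\ell)=\sum_i\kappa(X_\ell,X_i)$ together with the Lipschitz density assumption~\eqref{Eqn:density_condition} and the bandwidth lower bound~\eqref{eqn:bandwidth_condition_nonadaptive}---and combine them with a union bound over the $n$ sample points and the $K$ clusters to obtain, on an event $\cE$ of probability at least $1-c_1K\underline n^{-\xi}$, the uniform lower bound $d_n(X_\ell)\geq c\,n_r h^{q_r}$ for every $\ell\in G_r^\ast$ and every $r\in[K]$. An immediate consequence is that on $\cE$, for any $z\in G_k^\ast$,
\begin{equation*}
\sum_{z'\notin G_k^\ast}P_n(X_z,X_{z'})=\frac{1}{d_n(X_z)}\sum_{z'\notin G_k^\ast}\kappa(X_z,X_{z'})\leq \frac{n\,\kappa}{c\,n_k h^{q_k}}=:\epsilon_k,
\end{equation*}
using $|[n]\setminus G_k^\ast|\leq n$ together with the cross-cluster kernel bound $\kappa(X_z,X_{z'})\leq\kappa$.

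Next, I decompose $P_n^{2t}(X_i,X_j)$ according to the first escape time $\tau=\min\{s\geq 1:z_s\notin G_k^\ast\}$ along a path $X_i=z_0,z_1,\ldots,z_{2t}=X_j$; since $z_0\in G_k^\ast$ and $z_{2t}\notin G_k^\ast$, this $\tau$ is well-defined and lies in $\{1,\ldots,2t\}$. Letting $Q:=[P_n]_{G_k^\ast G_k^\ast}$ denote the substochastic submatrix of $P_n$ on $G_k^\ast$, summing over paths with $\tau=s$ and then over $s$ gives
\begin{equation*}
P_n^{2t}(X_i,X_j)=\sum_{s=1}^{2t}\sum_{\substack{z\in G_k^\ast\\z'\notin G_k^\ast}}[Q^{s-1}]_{iz}\,[P_n]_{zz'}\,[P_n^{2t-s}]_{z'j}.
\end{equation*}
Bounding $[P_n^{2t-s}]_{z'j}\leq 1$ by row-stochasticity and using the survival estimate $\sum_{z\in G_k^\ast}[Q^{s-1}]_{iz}\leq 1$ together with the escape bound above yields $P_n^{2t}(X_i,X_j)\leq 2t\,\epsilon_k$. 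Dividing by $d_n(X_j)\geq c\,n_m h^{q_m}$ on $\cE$ then delivers
\begin{equation*}
[A_n]_{ij}\leq \frac{2t\,\epsilon_k}{c\,n_m h^{q_m}}\leq \frac{C\,t\,n\,\kappa}{n_kh^{q_k}\,n_mh^{q_m}},
\end{equation*}
uniformly over $i\in G_k^\ast$, $j\in G_m^\ast$, and distinct $k,m\in[K]$, which is exactly the claim.

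The main obstacle is upgrading the pointwise degree concentration used in Lemma~\ref{lem:within_cluster_random_walk} to the uniform-in-$\ell$ lower bound $d_n(X_\ell)\gtrsim n_r h^{q_r}$ with the advertised failure probability: one must verify that the cross-cluster contributions $\sum_{i\notin G_r^\ast}\kappa(X_\ell,X_i)\leq n\kappa$ are negligible relative to the within-cluster $\Theta(n_rh^{q_r})$ baseline (which is exactly what the separation built into condition~\eqref{eqn:exact_recovery_condition_SDP_diffusion_Kmeans_LB_eigenval} guarantees), and then absorb the union bounds over $n$ and $K$ into the constants. The path-decomposition step itself is conceptually simple; its key trick is to \emph{not} attempt to track the walk after its first escape from $G_k^\ast$---bounding $[P_n^{2t-s}]_{z'j}\leq 1$ discards that information and produces the clean linear-in-$t$ dependence that is good enough for the subsequent exact-recovery analysis.
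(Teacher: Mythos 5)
Your proposal is correct and follows essentially the same route as the paper: the paper also bounds $[P_n^{2t}]_{ij}$ by the probability that the walk started at $i$ exits $G_k^\ast$ within $2t$ steps, estimates this by $2nt\kappa\max_{i'\in G_k^\ast}d_{n,k}^{-1}(X_{i'})$ (your first-escape-time path decomposition is just an explicit rewriting of $1-(1-\epsilon_k)^{2t}\le 2t\epsilon_k$), and then invokes the uniform degree concentration of Lemma~\ref{lem:node_degree} plus a union bound to replace the degrees by $n_kh^{q_k}$ and $n_mh^{q_m}$. The only cosmetic difference is that the paper works with the within-cluster degrees $d_{n,k}\le d_n$ directly, which makes the lower bound immediate and sidesteps the cross-cluster verification you flag as an obstacle.
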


Recall that $\kappa(x,y) = \exp\{-\|x-y\|^2/(2h^2)\}$ is the Gaussian kernel. Consequently, we may choose $\kappa = \exp\{-\delta^2/(2h^2)\}$ and $\tau = e^{-1/2}$ in these two lemmas. By Weyl's law for the growth of eigenvalues of the Laplace-Beltrami operator (cf. Remark 6 in \cite{trillos2018error}), we have $\lambda_{j}(\cM_{k}) \sim j^{2/q_{k}}$. Thus $\lambda_1(\cM_k)$ is bounded.  Since the bandwidth parameter satisfies $c_1 (\log n_k/ n_k)^{1/q_k}\,(\log n_k)^{\mathbf 1(q_k=2)/4}\leq h \leq c_2$ for some sufficient large constant $c_1$ and sufficiently small constant $c_2$ for all $k \in[K]$, inequality~\eqref{Eqn:Spectral_gap_lower} in Lemma~\ref{lem:within_cluster_random_walk} becomes
\begin{align*}
\gamma(P_{n,k}) \geq \frac{\nu_{q_k}^2}{8(q_k+2)(2\pi)^{q_k/2}}\, \lambda_1(\cM_k)\,h^2,\quad\forall k\in[K].
\end{align*}
Therefore, the following two bounds hold with probability at least $1 - c_3 K\, \underline{n}^{-\xi}$,
\begin{align*}
\max_{1 \leq k \leq K} \big\|\,[A_n]_{G_k^\ast G_k^\ast}- N_k^{-1} \mathbf{1}_{G_k^\ast}\mathbf{1}_{G_k^\ast}^T\,\big\|_\infty &\leq C\, nt\,\exp\Big\{-\frac{\delta^2}{2h^2}\Big\}\\
&+ C\,  \exp\bigg\{-\min_{1 \leq k \leq K}\Big\{\frac{\nu_{q_k}^2\lambda_1(\cM_k)}{4(q_k+2)(2\pi)^{q_k/2}}\Big\}\, h^2 t\bigg\},\\
\max_{1\leq k\neq m\leq K}\|[A_n]_{G_k^\ast G_m^\ast}\|_\infty & \leq  C\,nt\,\exp\Big\{-\frac{\delta^2}{2h^2}\Big\},
\end{align*}
for some constants $c_1,c_2,c_3$ only depending on $\{q_k,{\rm Sec_k},{\rm Inj_k}, {\rm Rch_k},\eta_k,{\rm Lip_k}\}_{k = 1}^{K}$ and $C$ only depending on $\{q_k,{\rm Sec_k},{\rm Inj_k}, {\rm Rch_k},\eta_k,{\rm Lip_k}\}_{k = 1}^{K}$ and $\xi$. The last two inequalities combining with Lemma~\ref{lem:DKM_SDP_master_bound} imply the exact recovery $\hat Z=Z^\ast$ as long as 
\begin{align*}
C\,nt\, \exp\Big\{-\frac{\delta^2}{2h^2}\Big\} +C\,\exp\bigg\{-\min_{1 \leq k \leq K}\Big\{\frac{\nu_{q_k}^2\lambda_1(\cM_k)}{4(q_k+2)(2\pi)^{q_k/2}}\Big\}\, h^2 t\bigg\} < \frac{1}{n}\,\min_{1\leq k\leq K}\Big\{\frac{n_k}{N_k}\Big\}. 
\end{align*}
Finally, the claimed result follows from the preceding display and the following lemma of a high probability bound on $N_k$, whose proof is postponed to Section~\ref{Sec:Proof_total_degree}.

\begin{lem}[Concentration for total within-weight in $G_{k}^{*}$]
\label{Lemma:total_degree}
Suppose the density $p_{k}$ satisfies~\eqref{Eqn:density_condition} and the bandwidth $h \leq c$ for some constant $c>0$. Then for any $\xi\geq 1$, it holds with probability at least $1-c_2\,n_k^{-\xi}$ that
\begin{align*}
\bigg| \frac{N_k}{n_k} - (\sqrt{2\pi}\, h)^{q_k} \,n_k \beta_k\bigg| \leq C \left(  n_{k} h^{q_{k}+1} + \sqrt{n_kh^{q_k}\log n_k} \right), 
\end{align*}
where $\beta_k = \mathbb E_{X\sim p_k}[p_k(X)]$ and $C$ is a constant only depending on $\{q_k,{\rm Sec_k},{\rm Inj_k}, {\rm Rch_k},\eta_k,{\rm Lip_k}\}_{k = 1}^{K}$ and $\xi$.
\end{lem}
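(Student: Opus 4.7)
The plan is to split $N_k$ into its diagonal and off--diagonal parts, recognize the off--diagonal piece as a U--statistic, and then handle the bias and the concentration separately. Since $\kappa(x,x)=1$ for the Gaussian kernel, I would write
\[
\frac{N_k}{n_k} \;=\; 1 + (n_k-1)\,U_n, \qquad U_n \,:=\, \binom{n_k}{2}^{-1}\!\!\sum_{\substack{i<j\\ i,j\in G_k^\ast}}\!\kappa(X_i,X_j),
\]
so that the problem reduces to (i) an asymptotic expansion of $\mu_k := \E[\kappa(X_1,X_2)]$ with explicit leading term $(\sqrt{2\pi}\,h)^{q_k}\beta_k$, and (ii) a Bernstein--type control of $|U_n-\mu_k|$.

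For (i), I would perform a Laplace--type expansion on the submanifold. Fix $x\in\cM_k$. Because $h$ is small, the Gaussian kernel is supported (up to exponentially small tails) on $B_{\cM_k}(x,\,C h\sqrt{\log(1/h)})$, which lies below the injectivity radius $\mathrm{Inj}_k$. Using the exponential map $\exp_x:T_x\cM_k\simeq\bR^{q_k}\to\cM_k$ to write $y=\exp_x(v)$, standard comparison estimates controlled by $\mathrm{Sec}_k$ and $\mathrm{Rch}_k$ yield
\[
\|y-x\|^{2} \;=\; |v|^2 \,+\, O\bigl(|v|^{4}/\mathrm{Rch}_k^{2}\bigr),\qquad dV_{\cM_k}(y) \;=\; \bigl(1+O(|v|^{2})\bigr)\,dv,
\]
while the Lipschitz property of $p_k$ gives $|p_k(y)-p_k(x)|\le \mathrm{Lip}_k\,|v|$. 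Plugging in and evaluating the resulting Gaussian integral on the tangent space produces, for every $x\in\cM_k$,
\[
\int_{\cM_k}\!e^{-\|x-y\|^2/(2h^2)}\,p_k(y)\,dV_{\cM_k}(y) \;=\; (\sqrt{2\pi}\,h)^{q_k}\,p_k(x)\,+\,O(h^{q_k+1}),
\]
where the Lipschitz term dictates the $O(h^{q_k+1})$ remainder (the curvature correction is of smaller order $h^{q_k+2}$). Integrating once more against $p_k$ gives $\mu_k=(\sqrt{2\pi}\,h)^{q_k}\beta_k + O(h^{q_k+1})$, which will contribute the bias term $n_k h^{q_k+1}$ in the final bound.

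For (ii), I would use the Hoeffding decomposition $U_n-\mu_k = \tfrac{2}{n_k}\sum_{i\in G_k^\ast} g(X_i) + R_n$ with $g(x)=\E[\kappa(x,X_2)\mid x]-\mu_k$ and $R_n$ a completely degenerate order--two U--statistic. The same Laplace expansion, applied without the outer integration, produces $\|g\|_\infty\le Ch^{q_k}$ and $\E[g(X)^{2}]\le Ch^{2q_k}$. Bernstein's inequality then gives, with probability at least $1-2n_k^{-\xi}$,
\[
\Bigl|\tfrac{2}{n_k}\textstyle\sum_i g(X_i)\Bigr| \;\le\; C\Bigl(h^{q_k}\sqrt{(\log n_k)/n_k}\,+\,h^{q_k}(\log n_k)/n_k\Bigr),
\]
and a standard Bernstein--type bound for degenerate U--statistics (e.g., Gin\'e--Latala--Zinn), using $\E[\kappa(X_1,X_2)^{2}]=O(h^{q_k})$, yields $|R_n|\le C\sqrt{h^{q_k}\log n_k}/n_k$ on the same event. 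Multiplying by $(n_k-1)$ and using $h\le c$ so that $h^{q_k}\sqrt{n_k\log n_k}\le c^{q_k/2}\sqrt{n_k h^{q_k}\log n_k}$, the concentration contribution is at most $C\sqrt{n_k h^{q_k}\log n_k}$.

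Finally, combining $|N_k/n_k-n_k\mu_k|\le 1 + (n_k-1)|U_n-\mu_k|$ with the expansion of $\mu_k$ (which yields $|n_k\mu_k-(\sqrt{2\pi}h)^{q_k}n_k\beta_k|\le Cn_k h^{q_k+1}$) and absorbing the constant $|1-\mu_k|\le 1$ into $\sqrt{n_k h^{q_k}\log n_k}$ (valid in the bandwidth regime imposed by~\eqref{eqn:bandwidth_condition_nonadaptive}) delivers the claimed bound. The main technical obstacle is the manifold Laplace expansion leading to $\mu_k=(\sqrt{2\pi}h)^{q_k}\beta_k+O(h^{q_k+1})$ under only Lipschitz regularity of $p_k$: it is the Lipschitz (rather than $C^{1,1}$) assumption that caps the bias at $h^{q_k+1}$ and must be tracked carefully through the change of variables. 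Once this expansion is secured, both the variance computation for $g$ and the concentration step reduce to off--the--shelf Bernstein inequalities.
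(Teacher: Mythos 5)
Your proposal is correct, but it takes a genuinely different route from the paper. The paper obtains the lemma almost for free from its Lemma~\ref{lem:node_degree}: writing $N_k=\sum_{i\in G_k^\ast}d_{n,k}(X_i)$, it sums the \emph{uniform} node-degree concentration bound $\max_i|d_{n,k}(X_i)/(n_k(\sqrt{2\pi}h)^{q_k})-p_k(X_i)|\leq C(h+\sqrt{\log n_k/(n_kh^{q_k})})$ (itself a pointwise Bernstein bound plus a union bound over $i$) and then applies Hoeffding's inequality to $\sum_i p_k(X_i)$ to replace it by $n_k\beta_k$. You instead treat the off-diagonal part of $N_k$ as a U-statistic, compute its mean via the manifold Laplace expansion (this is exactly the paper's Lemma~\ref{Lem:convolution_bound}/Lemma~\ref{lem:expectation_variance_bound}, so your bias term $n_kh^{q_k+1}$ matches), and control the fluctuation through the Hoeffding decomposition: Bernstein on the H\'ajek projection and a Gin\'e--Lata{\l}a--Zinn bound on the degenerate part. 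Your route is heavier machinery but exploits cancellation across $i$ that the paper's union bound discards: your stochastic term is $h^{q_k}\sqrt{n_k\log n_k}=h^{q_k/2}\sqrt{n_kh^{q_k}\log n_k}$, strictly smaller than the paper's $\sqrt{n_kh^{q_k}\log n_k}$, though the improvement is immaterial here since the stated bound (and typically the bias) absorbs it. Both arguments share the same small caveat: absorbing the $O(1)$ diagonal contribution into $\sqrt{n_kh^{q_k}\log n_k}$ requires $n_kh^{q_k}\gtrsim 1$, which the lemma's hypothesis $h\leq c$ alone does not give; you correctly flag that this comes from the bandwidth lower bound in~\eqref{eqn:bandwidth_condition_nonadaptive}, and the paper's proof of Lemma~\ref{lem:node_degree} relies on the same implicit assumption.
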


\subsection{Proof of Corollary~\ref{cor:main}}
Corollary~\ref{cor:main} follows from Theorem~\ref{thm:main} and the Cheeger inequality \cite{Cheeger1970}: 
\[
\lambda_{1}(\cM_{k}) \geq \frac{\fh(\cM_{k})^{2}}{4}. 
\]

\subsection{Proof of Theorem~\ref{thm:main_adaptive_lambda}}\label{Sec:proof_thm_main_adaptive}
Similar to the proof of Theorem~\ref{thm:main} in Section~\ref{Sec:proof_thm_main}, we can use the optimality of $\tilde Z$ and the feasibility of $Z^\ast$ for the SDP program~\eqref{eqn:clustering_Kmeans_sdp_unknown_K} to obtain the following basic inequality,
\begin{align}
0&\leq \langle A_n,\, \tilde Z - Z^\ast\rangle + n \rho\big(\tr(Z^\ast) - \tr(\tilde Z)\big) \notag \\
&=\sum_{1\leq k\neq m\leq K} \big\langle\, [A_n]_{G_k^\ast G_m^\ast},\, [\tilde Z - Z^\ast]_{G_k^\ast G_m^\ast}\,\big\rangle + \sum_{k=1}^K \big\langle\, [A_n]_{G_k^\ast G_k^\ast},\, [\tilde Z - Z^\ast]_{G_k^\ast G_k^\ast}\,\big\rangle \notag\\
&\qquad\qquad\qquad+ n\,\rho\,\big(\tr(Z^\ast) - \tr(\tilde Z)\big).\label{Eqn:basic_inequality_adaptive}
\end{align}
Since the only place where we used the constraint $\tr(Z) =K$ in Section~\ref{Sec:proof_thm_main} is Lemma~\ref{lem:some_ineq_feasible_set} in Appendix C, 
the analysis of the first two sums in~\eqref{Eqn:basic_inequality_adaptive} still apply, leading to
\begin{equation}
\label{Eqn:final_bound_adaptive}
\begin{aligned}
 \sum_{1\leq k\neq m\leq K} \frac{1}{N_K}\|\tilde Z_{G_k^\ast G_m^\ast}\|_1 
\leq&\, C\,\bigg( nt\, \exp\Big\{-\frac{\delta^2}{2h^2}\Big\} +\exp\Big\{-c\, \min_{1 \leq k \leq K}\{\lambda_1(\cM_k)\}\, h^2 t\Big\}\bigg)\, \big\|\tilde Z - Z^\ast\big\|_1\\
&\,\qquad + n\,\rho\,\big(\tr(Z^\ast) - \tr(\tilde Z)\big),
\end{aligned}
\end{equation}
which holds with probability at least $1-c_3K\,\underline{n}^{-c_3}$.

Now we apply Lemma~\ref{lem:some_ineq_feasible_set_adaptive} in Appendix C to obtain
\begin{align*}
n\, \rho\, \big(\tr(Z^\ast) - \tr(\tilde Z)\big) \leq 4n\,\rho\sum_{1\leq k\neq m\leq K} \frac{1}{n_k}\|\tilde Z_{G_k^\ast G_m^\ast}\|_1-\rho\,\big\|\tilde Z - Z^\ast\big\|_1.
\end{align*}
Combining this inequality with~\eqref{Eqn:final_bound_adaptive}, we obtain
\begin{align*}
 \bigg(\rho - C\,\Big( nt\, \exp\Big\{-\frac{\delta^2}{2h^2}\Big\} &+\exp\Big\{-c\, \min_{1 \leq k \leq K}\{\lambda_1(\cM_k)\}\, h^2 t\Big\}\Big)\bigg)\, \big\|\tilde Z - Z^\ast\big\|_1\\
 &\qquad\qquad+\,\bigg(1-4n\rho \max_{1\leq k\leq K}\Big\{\frac {N_k}{n_k}\Big\}\bigg)\,\sum_{1\leq k\neq m\leq K} \frac{1}{N_k}\|\tilde Z_{G_k^\ast G_m^\ast}\|_1
\leq 0.
\end{align*}
This implies the exact recovering $\tilde Z=Z^\ast$ provided that 
\begin{align*}
C_1\, \Big( nt\, \exp\Big\{-\frac{\delta^2}{2h^2}\Big\} &+\exp\Big\{-c\, \min_{1 \leq k \leq K}\{\lambda_1(\cM_k)\}\, h^2 t\Big\}\Big) < \rho \leq \frac{C_2}{n}\,\min_{1\leq k\leq K}\Big\{\frac{n_k}{N_k}\Big\}.
\end{align*}
Finally, the claimed result follows by combining the above with Lemma~\ref{Lemma:total_degree}.

\subsection{Proof of Lemma~\ref{lem:within_cluster_random_walk}}
We consider a fixed $k\in[K]$ throughout this proof. 
Recall that $P_n=D_n^{-1} \mathcal K_n$ defines the random walk $\mc W_n$ over $S_n=\{X_1,X_2,\ldots,X_n\}$, and $A_n = P_n^{2t} D_n^{-1}$. Now consider a new random walk $\mc W_{n,k}$ over the $k$th cluster $G_k^\ast$ defined in the following way. For simplicity of notation, we may rearrange the nodes order so that $G_k^\ast = \{1,2,\ldots,n_k\}$. Then the transition probability matrix $P_{n,k}\in\bR^{n_k\times n_k}$ of $\mc W_{n,k}$ is defined as
\begin{align*}
[P_{n,k}]_{ij} =\frac{\kappa(X_i,\,X_j)}{d_{n,k}(X_i)},\quad \forall i,j\in G_k^\ast,\quad \mbox{where } d_{n,k}(x) = \sum_{j\in G_k^\ast} \kappa(x,\,X_j)
\end{align*}
is the induced degree function within cluster $G_k^\ast$. Similar to the diagonal degree matrix $D_n$, we denote by $D_{n,k}\in\bR^{n_k\times n_k}$ the diagonal matrix whose $i$th diagonal entry is $d_{n,k}(X_i)$ for $i\in[n_k]$. Note that $N_k=\sum_{i\in G_k^\ast} d_{n,k}(X_i)$ the total degrees within $G_k^\ast$ so that $N_k \geq n_k \min_{i\in G^\ast_k} d_{n,k}(X_i)$. It is easy to see that the limiting distribution
of $\mc W_{n,k}$ is $\pi_{n,k} = N_k^{-1} \mbox{diag}(D_{n,k})\in\bR^{n_k}$.
 Under the separation condition on $\delta$ in the lemma, the probability of moving out from $G_k^\ast$ is exponentially small, suggesting that we may approximate the sub-matrix $[P_n^{2t}]_{G_k^\ast G_k^\ast} $ of $P_n^{2t}$ with $P_{n,k}^{2t}$. We will formalize this statement in the rest of the proof.
 
First, we apply the triangle inequality to obtain
\begin{align}
&\big\|\,[A_n]_{G_k^\ast G_k^\ast} - N_k^{-1} \mathbf{1}_{G_k^\ast}\mathbf{1}_{G_k^\ast}^T\,\big\|_\infty \notag\\
& \leq \big\| \,[P_n^{2t}]_{G_k^\ast G_k^\ast} [D_{n}]_{G_k^\ast G_k^\ast}^{-1} - P_{n,k}^{2t} D_{n,k}^{-1}\big\|_\infty + \big\| \,P_{n,k}^{2t} D_{n,k}^{-1} - N_k^{-1} \mathbf{1}_{G_k^\ast} \mathbf{1}_{G_k^\ast}^T \big\|_\infty =: T_1 + T_2,\label{Eqn:within_random_walk}
\end{align}
where $T_1$ captures the difference between $[P_{n}^{2t}]_{G_k^\ast G_k^\ast}$ and $P_{n,k}^{2t}$, and $T_2$ characterizes the convergence of the Markov chain $\mc W_{n,k}$ to its limiting distribution $\pi_{n,k}$ after $2t$ steps.

Recall that $\kappa = \max_{1\leq k\neq k'\leq K}\sup_{x\in \cM_k,\,x'\in \cM_{k'}} \kappa(x,\,x')$ is the minimal between-cluster affinity. The first term $T_1$ and the second term $T_2$ can be bounded via two lemmas below.

\begin{lem}[Term $T_1$]\label{Lem:T_1}
If $n \kappa \leq \min_{i \in G_{k}^{*}} d_{n,k}(X_{i})$, then
\begin{align*}
T_1 \leq (2t+1)\,n\,\kappa\,\max_{i\in[n]}d_{n,k}^{-2}(X_i). 
\end{align*}
\end{lem}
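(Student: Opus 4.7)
The plan is to prove the bound by induction on the matrix power. Set $A_n^{(m)} := P_n^m D_n^{-1}$, $\tilde A^{(m)} := P_{n,k}^m D_{n,k}^{-1}$, $\epsilon_m := \|[A_n^{(m)}]_{G_k^\ast G_k^\ast} - \tilde A^{(m)}\|_\infty$, and $D_{\min} := \min_{i \in G_k^\ast} d_{n,k}(X_i)$. I will establish the one-step recursion $\epsilon_{m+1} \leq \epsilon_m + n\kappa / D_{\min}^2$, together with the base case $\epsilon_0 \leq n\kappa / D_{\min}^2$, which follows easily because $[A_n^{(0)}]_{G_k^\ast G_k^\ast} - \tilde A^{(0)}$ is diagonal with entries $1/d_n(X_i) - 1/d_{n,k}(X_i)$ and $d_n(X_i) - d_{n,k}(X_i) = \sum_{\ell \notin G_k^\ast}\kappa(X_i, X_\ell) \leq n\kappa$. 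Iterating the recursion $2t$ times yields $\epsilon_{2t} \leq (2t+1)\, n\kappa / D_{\min}^2$, which implies the claim since $D_{\min}^{-2} \leq \max_{i \in [n]} d_{n,k}^{-2}(X_i)$.

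For the inductive step I plan to use, for each $i, j \in G_k^\ast$, the algebraic identity
\[
[A_n^{(m+1)}]_{ij} - [\tilde A^{(m+1)}]_{ij} = \sum_{\ell \in G_k^\ast} [P_{n,k}]_{i\ell}\,\bigl([A_n^{(m)}]_{\ell j} - [\tilde A^{(m)}]_{\ell j}\bigr) + \sum_{\ell \in [n]} \beta_\ell\, [A_n^{(m)}]_{\ell j},
\]
where $\beta \in \mathbb{R}^n$ is the signed discrepancy between the probability vector $[P_n]_{i,\cdot}$ on $[n]$ and the zero-padded lift of $[P_{n,k}]_{i,\cdot}$ from $G_k^\ast$ to $[n]$. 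The first sum has absolute value at most $\epsilon_m$ by row-stochasticity of $P_{n,k}$, so all the work is in controlling the second.

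The crucial step, which also yields the sharp constant $(2t+1)$ rather than a doubled version, is to observe that $\sum_\ell \beta_\ell = 1 - 1 = 0$ since both $[P_n]_{i,\cdot}$ and the lift of $[P_{n,k}]_{i,\cdot}$ are probability measures on $[n]$. Consequently the second sum can be recentered as $\sum_\ell \beta_\ell ([A_n^{(m)}]_{\ell j} - c)$ for any $c \in \mathbb{R}$, and the optimal choice of $c$ produces a bound of the form $\tfrac{1}{2} \max_\ell [A_n^{(m)}]_{\ell j} \cdot \|\beta\|_1$. Using $[A_n^{(m)}]_{\ell j} \leq 1/d_n(X_j)$ and the direct calculation $\|\beta\|_1 = 2\sum_{\ell \notin G_k^\ast}[P_n]_{i\ell} \leq 2n\kappa/d_n(X_i)$ (from the between-cluster affinity bound $\kappa(X_i, X_\ell) \leq \kappa$ for $\ell \notin G_k^\ast$), together with the hypothesis $n\kappa \leq D_{\min} \leq d_n(X_i), d_n(X_j)$, the second sum contributes at most $n\kappa/D_{\min}^2$, which closes the recursion. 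The main subtlety is this zero-mean cancellation: without exploiting it, treating the within-cluster perturbation $[P_n]_{G_k^\ast G_k^\ast} \neq P_{n,k}$ and the leakage into $(G_k^\ast)^c$ as two independent error sources each contributes $n\kappa/D_{\min}^2$ per induction step, which would produce only the looser constant $(4t+1)$ rather than the advertised $(2t+1)$.
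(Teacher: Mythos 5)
Your proof is correct, and it takes a genuinely different route from the paper's. The paper first splits $T_1$ by the triangle inequality into two pieces, $\|([P_n^{2t}]_{G_k^\ast G_k^\ast}-P_{n,k}^{2t})D_{n,k}^{-1}\|_\infty$ and $\|[P_n^{2t}]_{G_k^\ast G_k^\ast}([D_n]_{G_k^\ast G_k^\ast}^{-1}-D_{n,k}^{-1})\|_\infty$; the first is handled by a probabilistic sandwich on $[P_n^s]_{ij}$ obtained by conditioning the walk on staying in $G_k^\ast$ at each step (giving $2t\,n\kappa\max_i d_{n,k}^{-2}$), and the second is the elementary degree comparison $|d_n-d_{n,k}|\le n\kappa$ (giving the extra $+1$). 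You instead run a single recursion directly on the affinity matrices $P_n^mD_n^{-1}$ versus $P_{n,k}^mD_{n,k}^{-1}$, absorbing the degree comparison into the base case $m=0$ and controlling the one-step error via the mean-zero recentering of $\beta$ (both rows being probability vectors), which correctly halves the naive per-step bound from $\|\beta\|_1\max_\ell[A_n^{(m)}]_{\ell j}$ to $\tfrac12\|\beta\|_1\max_\ell[A_n^{(m)}]_{\ell j}$ and recovers the same constant $(2t+1)$. All the individual estimates check out: the identity for $[A_n^{(m+1)}]_{ij}-[\tilde A^{(m+1)}]_{ij}$ is exact, $\beta$ is nonnegative off $G_k^\ast$ and nonpositive on $G_k^\ast$ so $\|\beta\|_1=2\sum_{\ell\notin G_k^\ast}[P_n]_{i\ell}\le 2n\kappa/d_n(X_i)$, and $[A_n^{(m)}]_{\ell j}\le 1/d_n(X_j)$. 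A minor observation: your argument bounds the per-step increment by $n\kappa/(d_n(X_i)d_n(X_j))\le n\kappa/D_{\min}^2$ without ever invoking the hypothesis $n\kappa\le\min_i d_{n,k}(X_i)$, whereas the paper needs it to keep the conditional exit probability in $[0,1]$ for its Bernoulli-type inequality $(1-x)^s\ge 1-sx$; so your route is, if anything, slightly more robust, at the cost of a less transparent probabilistic interpretation of where the error accumulates.
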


\begin{lem}[Term $T_2$]\label{Lem:T_2}
Let $\tau = \inf_{x,y\in S:\, \|x-y\| \leq h} \kappa(x,y)$. For each $i\in G_k^\ast$, let $d_{k}^\dagger(X_{i})$ denote total number of points in $\{X_j\}_{j\in G_k^\ast}$ inside the $d$-ball centered at $X_i$ with radius $h$. For any $\xi\geq 1$, if $c\, (\log n_k/ n_k)^{1/q_k}\,(\log n_k)^{\mathbf 1(q_k=2)/4}\leq h$, then it holds with probability at least $1-c_1\,n_k^{-\xi}$ that
\begin{align*}
T_2 \leq e^{-2t\,\gamma(P_{n,k})} \,\max_{i\in G_k^\ast} d_{n,k}^{-1}(X_i),\quad\forall t=1,2,\ldots,
\end{align*}
with
\begin{align}\label{Eqn:spectral_gap_bound}
\gamma(P_{n,k}) \geq \bigg(1- C\,\Big(c^{-1}+(\sqrt{\lambda_1(\cM_k)} + 1)\,h+ h^2\Big)\bigg) \,\bigg[\min_{i\in G_k^\ast}  \frac{d_{k}^\dagger(X_{i})}{d_{n,k}(X_i)}\bigg]\,\frac{\tau\nu_{q_k}\, \lambda_1(\cM_k)\,h^2}{2(q_k+2)},
\end{align}
where recall that $\lambda_1(\cM_k)>0$ denotes the second smallest eigenvalue of the Laplace-Beltrami operator on $\cM_k$, and $\nu_{d}=\frac{\pi^{d/2}}{\Gamma(d/2+1)}$ denotes the volume of the $d$-dim unit ball. Here, constant $c_1$ depends only on the submanifold $\cM_k$ through $\{q_k,{\rm Diam_k}, {\rm  Sec_k}$, ${\rm Inj_k}, {\rm Rch_k}, {\rm Rch_k}\}$ and the density $p_k$ through $\{\eta_k,{\rm Lip_k}\}$, and constant $C$ depends on $\{q_k,{\rm Sec_k},{\rm Inj_k}, {\rm Rch_k},\eta_k,{\rm Lip_k},\xi\}$.
\end{lem}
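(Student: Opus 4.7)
The plan is to reduce the bound on $T_2$ to a spectral-gap estimate for the symmetrized transition matrix, and then invoke a quantitative graph-Laplacian-to-Laplace-Beltrami comparison to lower bound that spectral gap.

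\textbf{Step 1 (Spectral reduction).} First I would symmetrize: let $\tilde P_{n,k} = D_{n,k}^{1/2}P_{n,k}D_{n,k}^{-1/2} = D_{n,k}^{-1/2}[\mathcal K_n]_{G_k^\ast G_k^\ast}D_{n,k}^{-1/2}$. Positive semidefiniteness of the Gaussian kernel matrix yields eigenvalues of $\tilde P_{n,k}$ in $[0,1]$, with Perron eigenpair $(1,v_0)$, $v_0 = N_k^{-1/2}D_{n,k}^{1/2}\mathbf{1}_{G_k^\ast}$. A direct computation gives $D_{n,k}^{-1/2}v_0v_0^T D_{n,k}^{-1/2} = N_k^{-1}\mathbf{1}_{G_k^\ast}\mathbf{1}_{G_k^\ast}^T$, hence
\[
P_{n,k}^{2t}D_{n,k}^{-1} - N_k^{-1}\mathbf{1}_{G_k^\ast}\mathbf{1}_{G_k^\ast}^T = D_{n,k}^{-1/2}\bigl(\tilde P_{n,k}^{2t} - v_0v_0^T\bigr)D_{n,k}^{-1/2}.
\]
Bounding each entry by $\max_i d_{n,k}^{-1}(X_i)$ times the operator norm and using $\|\tilde P_{n,k}^{2t}-v_0v_0^T\|_{\mathrm{op}} \leq (1-\gamma(P_{n,k}))^{2t} \leq e^{-2t\gamma(P_{n,k})}$ gives the claimed bound on $T_2$.

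\textbf{Step 2 (Spectral gap via Dirichlet form comparison).} With $L_{n,k}=I-\tilde P_{n,k}$, one has $\gamma(P_{n,k}) = \lambda_1(L_{n,k})$. I would bound $\lambda_1(L_{n,k})$ from below through its variational characterization, first dominating the Gaussian-weighted Dirichlet energy by the indicator-weighted one:
\[
\langle f,L_{n,k}f\rangle \geq \tau\,\Bigl[\min_{i\in G_k^\ast}\tfrac{d_k^\dagger(X_i)}{d_{n,k}(X_i)}\Bigr]\,\mathcal E_{n,k}^\dagger(f),
\]
where $\mathcal E_{n,k}^\dagger$ is the Dirichlet form of the $h$-neighborhood random geometric graph on $\{X_j\}_{j\in G_k^\ast}$. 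Then I would invoke the quantitative spectral convergence of Burago-Ivanov-Kurylev \cite{burago2014graph} and García Trillos et al.~\cite{trillos2018error}, which identify $h^{-(q_k+2)}\mathcal E_{n,k}^\dagger$ as a consistent approximation of the Dirichlet energy of $\Delta_{\cM_k,p_k}$ from~\eqref{eqn:drift_Laplace-Beltrami}. The proportionality constant arises from the second moment $\int_{B(0,1)}x_1^2\,dx = \nu_{q_k}/(q_k+2)$, producing the prefactor $\nu_{q_k}/(2(q_k+2))$ after combining with the normalization by $\sim n_k h^{q_k}$. This yields~\eqref{Eqn:spectral_gap_bound} modulo three multiplicative errors: (i)~an $O(c^{-1})$ error from the $\infty$-optimal-transport distance $W_\infty(\mu_{n,k},\mu_k)$ under the bandwidth lower bound, where the $(\log n_k)^{1/4}$ boost in $q_k=2$ matches the sharp connectivity threshold for two-dimensional random geometric graphs; (ii)~an $O((\sqrt{\lambda_1(\cM_k)}+1)h)$ quadrature error, with $\sqrt{\lambda_1(\cM_k)}$ entering as the Lipschitz constant of the first nontrivial eigenfunction of $\Delta_{\cM_k,p_k}$; and (iii)~an $O(h^2)$ contribution from the Taylor expansion of the kernel against the Lipschitz density $p_k$ and the extrinsic curvature of $\cM_k\hookrightarrow\bR^p$ (controlled by $\mathrm{Sec}_k$ and $\mathrm{Rch}_k^{-1}$).

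\textbf{Main obstacle.} The technical crux is extracting the explicit prefactor $\nu_{q_k}/(2(q_k+2))$ in front of $\tau\lambda_1(\cM_k)h^2$ while simultaneously tracking the three error sources to the stated precision. This requires careful bookkeeping of (a) local coordinate charts on $\cM_k$ controlled by $\mathrm{Rch}_k$, $\mathrm{Inj}_k$, and $\mathrm{Sec}_k$ so that the $h$-balls in $\bR^p$ can be compared with intrinsic geodesic balls on $\cM_k$; (b) the regularity of the first drift-Laplace eigenfunction, which depends on the density bounds in~\eqref{Eqn:density_condition}; and (c) a uniform-in-$f$ transfer of the continuous Dirichlet energy to its graph discretization via $W_\infty$ transport maps in the spirit of~\cite{trillos2018error}. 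The dimension-two case is especially delicate, being the only regime in which the extra $(\log n_k)^{1/4}$ factor is needed to drive the transport error below the competing $h^2$ curvature term.
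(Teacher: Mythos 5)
Your proposal follows essentially the same route as the paper: the $T_2$ bound via the symmetrized transition matrix is exactly the standard mixing-time estimate the paper cites from Levin--Peres (12.11), the Dirichlet-form comparison between the Gaussian-weighted chain and the $h$-neighborhood geometric graph (with the $\tau\,\min_i d_k^\dagger(X_i)/d_{n,k}(X_i)$ factor) is the paper's application of the Markov-chain comparison theorem, and the final spectral-gap lower bound is obtained, as in the paper, by invoking the Burago--Ivanov--Kurylev / Garc\'ia Trillos et al.\ convergence of the normalized graph Laplacian of the random geometric graph to the drift Laplace--Beltrami operator. The argument and all constants match; no substantive differences to report.
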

\noindent Proofs of these two lemmas are provided in Sections~\ref{Sec:Proof_T_1} and \ref{Sec:Proof_T_2}.

Combining upper bounds for the two terms in inequality~\eqref{Eqn:within_random_walk} together, we can reach
\begin{align*}
\big\|\,[A_n]_{G_k^\ast G_k^\ast}- N_k^{-1} \mathbf{1}_{G_k^\ast}\mathbf{1}_{G_k^\ast}^T\,\big\|_\infty \leq (2t+1)\,n\,\kappa\, \max_{i\in[n]}d_{n,k}^{-2}(X_i) + e^{-2t\,\gamma(P_{n,k})}\,\max_{i\in[n]}d_{n,k}^{-1}(X_i),
\end{align*}
where the spectral gap $\gamma(P_{n,k})$ satisfies ~\eqref{Eqn:spectral_gap_bound}.
It remains to prove some high probability bounds for $d_{n,k}(X_i)$ and $d_{k}^\dagger(X_i)$, which are summarized in the following lemma.

\begin{lem}[Concentrations for node degrees]
\label{lem:node_degree}
Suppose the density $p_{k}$ satisfies~\eqref{Eqn:density_condition} and the bandwidth $h \leq c_1$ for some constant $c_1>0$. Then for any $\xi\geq 1$, it holds with probability at least $1-c_2\,n_k^{-\xi}$ that
\begin{align*}
\max_{i\in G_k^\ast} \bigg|\,\frac{d_{n,k}(X_i)}{n_k\,(\sqrt{2\pi} \,h)^{q_k}}  - p_k(X_i) \bigg| &\leq C\bigg( h + \sqrt{\frac{\log n_k}{n_k h^{q_{k}}}}\,\bigg), \quad \mbox{and} \\
\max_{i\in G_k^\ast} \bigg|\,\frac{d_{k}^\dagger(X_i)}{n_k\,\nu_{q_k}\,h^{q_k}}  - p_k(X_i) \bigg| &\leq C\bigg( h^2 + \sqrt{\frac{\log n_k}{n_k h^{q_{k}}}}\,\bigg),
\end{align*}
where $\nu_{d}=\frac{\pi^{d/2}}{\Gamma(d/2+1)}$ denotes the volume of an unit ball in $\bR^{d}$. Here constants $c_1,c_2$ only depend on the manifold $\cM_k$ through $\{q_k,{\rm Sec_k},{\rm Inj_k}\}$ and the density function $p_k$ through $\{\eta_k,{\rm Lip_k}\}$, and constant $C$ depends on $\{q_k,{\rm Sec_k},{\rm Inj_k},\eta_k,{\rm Lip_k},\xi\}$.
\end{lem}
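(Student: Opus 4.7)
The plan is to reduce both bounds to a template of (i) computing the conditional expectation of a sum of i.i.d.\ bounded random variables via a manifold integration in a normal chart at $X_i$, (ii) applying Bernstein's inequality for the conditional fluctuations, and (iii) taking a union bound over $i \in G_k^\ast$. Fixing $i$ and conditioning on $X_i$, the quantity $d_{n,k}(X_i)$ (respectively $d_k^\dagger(X_i)$) is, up to the self-term, a sum of $n_k - 1$ i.i.d.\ bounded terms indexed by the remaining samples $(X_j)_{j \in G_k^\ast,\, j \neq i}$.

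For the expected value in the first inequality, I would introduce normal coordinates $u \in T_{X_i}\cM_k \cong \bR^{q_k}$ via the exponential map at $X_i$. Using the bounded sectional curvature ${\rm Sec}_k$, injectivity radius ${\rm Inj}_k$, and reach ${\rm Rch}_k$, one has $\|X_i - \exp_{X_i}(u)\|^2 = \|u\|^2 + O(\|u\|^4)$ and the volume Jacobian equals $1 + O(\|u\|^2)$ in these coordinates (on a neighborhood of radius $\lesssim \min\{{\rm Inj}_k, {\rm Rch}_k\}$). The Lipschitz continuity of $p_k$ gives $p_k(\exp_{X_i}(u)) = p_k(X_i) + O(\|u\|)$. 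Because the Gaussian factor localizes $\|u\|$ to scale $h$, a Laplace-type expansion yields
\[
\mathbb E[d_{n,k}(X_i)\mid X_i] = n_k\,(\sqrt{2\pi}\,h)^{q_k}\,p_k(X_i)\,\bigl(1 + O(h)\bigr),
\]
where the $O(h)$ comes from the Lipschitz modulus of $p_k$ (curvature and Jacobian contributions are $O(h^2)$ and absorbed). For the second inequality, the same chart computes $\mbox{Vol}\bigl(B(X_i,h)\cap\cM_k\bigr) = \nu_{q_k} h^{q_k}\bigl(1 + O(h^2)\bigr)$ by comparing the Euclidean ball with its geodesic counterpart, and integrating $p_k$ over this ball, exploiting the symmetry of the Euclidean ball in the tangent space to cancel the odd-order contribution, yields the analogous identity with leading bias $O(h^2)$.

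For concentration, each summand is bounded by $1$ and has conditional variance $O(h^{q_k})$ (the same Laplace argument applied to $\kappa^2$, or to the indicator squared which equals itself). Bernstein's inequality then gives, with probability at least $1 - 2\,n_k^{-(\xi+2)}$,
\[
\bigl|d_{n,k}(X_i) - \mathbb E[d_{n,k}(X_i)\mid X_i]\bigr| \leq C\bigl(\sqrt{(\xi+2)\,n_k h^{q_k}\log n_k} + (\xi+2)\log n_k\bigr),
\]
and likewise for $d_k^\dagger(X_i)$. Dividing by $n_k (\sqrt{2\pi}h)^{q_k}$ (respectively $n_k \nu_{q_k} h^{q_k}$) produces the fluctuation term $\sqrt{\log n_k/(n_k h^{q_k})}$ once $h \leq c_1$ ensures $n_k h^{q_k}$ dominates the linear-in-$n_k^{-1}$ remainder. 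A union bound over the $n_k$ nodes costs a factor of $n_k$, absorbed by the exponent shift from $\xi+2$ down to $\xi$.

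The principal obstacle is controlling the geometric expansion uniformly over $X_i$ so that the remainder constants depend only on $q_k, {\rm Sec}_k, {\rm Inj}_k, {\rm Rch}_k$, together with handling the contribution from $\exp_{X_i}(u)$ lying outside the chart's domain of radius $\min\{{\rm Inj}_k,{\rm Rch}_k\}$. For the Gaussian kernel this is a standard exponentially small tail once $h$ is sufficiently small, and for the ball indicator it vanishes identically once $h < \min\{{\rm Inj}_k,{\rm Rch}_k\}$, so a condition $h \leq c_1$ with $c_1$ small enough relative to these geometric invariants suffices to absorb the out-of-chart mass into the constant $C$.
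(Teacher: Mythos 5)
Your proposal is correct and follows essentially the same route as the paper: a uniform manifold-convolution estimate for the conditional expectation (bias $O(h)$ for the Gaussian kernel via the Lipschitz density, $O(h^2)$ for the ball indicator via odd-order cancellation), a conditional variance bound of order $h^{q_k}$, Bernstein/Chernoff concentration, and a union bound over the $n_k$ nodes; the paper merely cites its Lemma on convolution over manifolds and a result of Burago--Ivanov--Kurylev for the geometric expansions you sketch inline. One small caveat: absorbing the linear Bernstein term and the self-term into $\sqrt{\log n_k/(n_k h^{q_k})}$ requires $n_k h^{q_k}\gtrsim \log n_k$, which comes from the \emph{lower} bound on $h$ in the bandwidth assumption~\eqref{eqn:bandwidth_condition_nonadaptive} (as the paper invokes), not from $h\leq c_1$.
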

\noindent A proof of this lemma is deferred to Section~\ref{Sec:Proof_lem:node_degree}.

Finally, by combining this lemma with the last display, and applying the uniform boundedness condition~\eqref{Eqn:density_condition} on $p_k$, we obtain that for any $\xi\geq 1$, if $c\, (\log n_k/ n_k)^{1/q_k}\,(\log n_k)^{\mathbf 1(q_k=2)/4}\leq h$, then for any $n_k\geq M_0$,
\begin{align*}
\big\|\,[A_n]_{G_k^\ast G_k^\ast}- N_k^{-1} \mathbf{1}_{G_k^\ast}\mathbf{1}_{G_k^\ast}^T\,\big\|_\infty \leq C'\,n\,t\,\kappa\, (n_k\,h^{q_k})^{-2}+ C'\, (n_k\,h^{q_k})^{-1}\, e^{-2t\,\gamma(P_{n,k})}
\end{align*}
holds with probability at least $1-c_{1} \, n_{k}^{-\xi}$, where the spectral gap $\gamma(P_{n,k})$ satisfies
\begin{align*}
\gamma(P_{n,k}) \geq \bigg(1- C\,\Big(c^{-1}+(\sqrt{\lambda_1(D_k)} + 1)\,h+ h^2\Big)\bigg)\,\frac{\nu_{q_k}^2}{4(q_k+2)(2\pi)^{q_k/2}}\,\tau\, \lambda_1(\cM_k)\,h^2.
\end{align*}
Here, constants $(c_1,M_0)$ depend only on the submanifold $\cM_k$ through $\{q_k,{\rm Diam_k}, {\rm  Sec_k}$, ${\rm Inj_k}, {\rm Rch_k}, {\rm Rch_k}\}$ and the density $p_k$ through $\{\eta_k,{\rm Lip_k}\}$, and constant $C,C'$ depends on $\{q_k,{\rm Sec_k},{\rm Inj_k}, {\rm Rch_k}$, $\eta_k,{\rm Lip_k},\xi\}$.

\subsection{Proof of Lemma~\ref{lem:between_cluster_random_walk}}
For each indices $i$ and $j$ that belong to two difference clusters $G_k^\ast$ and $G_m^\ast$ with $k \neq m$, we have
\begin{align*}
[A_n]_{ij}= [P_{n}^{2t}]_{ij} \cdot d_n(X_j)^{-1}.
\end{align*}
Let $\mc W_n=\{Y_t:\,t\geq 0\}$, with $Y_t$ denote the state of the Markov chain $\mc W_n$ at time $t$. 
Define $T_k(i)=\min\big\{t \in \bN_{+}:\, Y_t \not\in G_k^\ast, \, Y_{0} = i\big\}$ denote the first exit time from $G_k^\ast$ of $\mc W_n$ starting from $Y_0=i$. Then it is easy to see that
\begin{align*}
[P_{n}^{2t}]_{ij} = \Prob(Y_0=i,\,Y_{2t} = j) \leq  \Prob(T_k(i) \leq 2t) = 1 - \Prob(T_k(i) > 2t).
\end{align*}
Since for each $i\in G_k^\ast$, the one-step transition probability of moving out from $G_k^\ast$ is bounded by $n\,\kappa\,\max_{i\in G_k^\ast} d_{n}^{-1}(X_i)$, we have 
\begin{align*}
\Prob(T_k(i) > 2t) \geq (1-n\,\kappa\,\max_{i\in G_k^\ast} d_{n}^{-1}(X_i))^{2t} \geq 1-2n\,t \,\kappa\,\max_{i\in G_k^\ast} d_{n,k}^{-1}(X_i), 
\end{align*}
provided that $n\,\kappa \leq \min_{i\in G_k^\ast} d_{n}(X_i)$. Therefore, for each $i\in G_k^\ast$ and $j\in G_m^\ast$ with $k\neq m$, we have
\begin{align*}
\big|[A_n]_{ij}\big|  \leq 2n\,t \,\kappa\,\max_{i'\in G_k^\ast} d_{n,k}^{-1}(X_{i'}) \,\max_{j'\in G_m^\ast} d_{n,m}^{-1}(X_{j'}).
\end{align*}
By Lemma~\ref{lem:node_degree} and~\eqref{Eqn:density_condition}, we have that with probability at least $1-c_{1}n^{-\xi}$, $C_{1} n_{k} h^{q_{k}} \leq d_{n,k}(X_{i}) \leq C_{2} n_{k} h^{q_{k}}$ for some constants $C_1$ and $C_{2}$ only depending on $\cM_k$ and $p_k$. Note that condition~\eqref{eqn:exact_recovery_condition_SDP_diffusion_Kmeans_LB_eigenval} in Theorem~\ref{thm:main} yields that $n \kappa = n e^{-\delta^{2} / (2h^{2})} \leq C n_{k} h^{q_{k}}$. Then the claimed bound is implied by the above combined with the union bound.

\subsection{Proof of Lemma~\ref{Lem:T_1}}\label{Sec:Proof_T_1}
By adding and subtracting the same term we obtain
\begin{align}
T_1 \leq &\,  \big\| \big([P_n^{2t}]_{G_k^\ast G_k^\ast} - P_{n,k}^{2t} \big)\,D_{n,k}^{-1}\big\|_\infty +\big\| \,[P_n^{2t}]_{G_k^\ast G_k^\ast}\,\big( [D_{n}]_{G_k^\ast G_k^\ast}^{-1} - D_{n,k}^{-1}\big)\big\|_\infty \notag \\
\leq &\, \big\| \,[P_n^{2t}]_{G_k^\ast G_k^\ast} - P_{n,k}^{2t} \big\|_\infty \,\|D_{n,k}^{-1}\|_\infty+ \big\|  [D_{n}]_{G_k^\ast G_k^\ast}^{-1} - D_{n,k}^{-1}\big\|_\infty \notag \\
= &\, \big\| \,[P_n^{2t}]_{G_k^\ast G_k^\ast} - P_{n,k}^{2t} \big\|_\infty \,\max_{i\in G_k^\ast} d_{n,k}^{-1}(X_i)+\max_{i\in G_k^\ast} |d_n^{-1}(X_i)-d_{n,k}^{-1}(X_i)| \notag\\
\leq &\, \big\| \,[P_n^{2t}]_{G_k^\ast G_k^\ast} - P_{n,k}^{2t} \big\|_\infty \,\max_{i\in G_k^\ast}d_{n,k}^{-1}(X_i) +\max_{i\in G_k^\ast} |d_n(X_i)-d_{n,k}(X_i)|\,\max_{i\in G_k^\ast}d_{n,k}^{-2}(X_i),\label{Eqn:T_1_bound}
\end{align}
where the second inequality is due to the fact that each row sum of $[P_n^{2t}]_{G_k^\ast G_k^\ast}$ is at most one.

Now we consider the first term in~\eqref{Eqn:T_1_bound}. Recall that $\mc W_n=\{Y_t:\,t\geq 0\}$, where $Y_t$ is the state of the Markov chain $\mc W_n$ at time $t$.   Note that for each $i,j\in G_k^\ast$, we have
\begin{align*}
[P_{n,k}]_{ij} =\frac{\kappa(X_i, X_j)}{\sum_{j\in G_k^\ast} \kappa(X_i,X_j)}= \frac{\Prob(Y_{2t}=j\,|\, Y_{2t-1}=i)}{\Prob(Y_{2t}\in G_k^\ast\,|\, Y_{2t-1}=i)} = \Prob(Y_{2t}=j\, |\,Y_{2t-1}=i,\,Y_{2t}\in G_k^\ast).
\end{align*}
As a consequence, we have by the law of total probability and the Markov property of $\cW_n$ that for each $i,j\in G_k^\ast$ and $s\in\bN_{+}$,
\begin{align*}
[P_n^{s}]_{ij} &= \Prob(Y_{s} = j \,|\, Y_0 =i) \\
&=\sum_{\ell\in G_k^\ast} \Prob(Y_{s} = j \,|\, Y_{s-1} =\ell,\, Y_0=i, \,Y_{s}\in G_k^\ast) \\
& \qquad \ \ \ \  \cdot \Prob(Y_{s}\in G_k^\ast \,|\, Y_{s-1} =\ell,\, Y_0=i)\cdot \Prob(Y_{s-1} = \ell\,|\, Y_0=i) \\
& \ \ \ \ + \sum_{\ell\not\in G_k^\ast}  \Prob(Y_{s} = j\,|\, Y_{s-1} =\ell,\, Y_0=i) \cdot \Prob(Y_{s-1} =\ell\,|\, Y_0=i)\\
&= \sum_{\ell\in G_k^\ast} [P_n^{s-1}]_{i\ell} \cdot [P_{n,k}]_{\ell j}\cdot
\Prob(Y_{s}\in G_k^\ast \,|\, Y_{s-1} =\ell)\\
& \ \ \ \ + \sum_{\ell\not\in G_k^\ast}  \Prob(Y_{s} = j\,|\, Y_{s-1} =\ell) \cdot \Prob(Y_{s-1} =\ell\,|\, Y_0=i).
\end{align*}
For each pair $(j,\ell)$ belonging to different clusters, noting that $d_{n,k}(X_i) \leq d_n(X_i)$, we have
\begin{align*}
\Prob(Y_{s} = j\,|\, Y_{s-1} =\ell) = [P_n]_{\ell j} =\frac{\kappa(X_\ell, X_j)}{d_n(X_\ell)} \leq \kappa\, \max_{i\in[n]} d_n^{-1}(X_i) \leq \max_{i\in[n]} d_{n,k}^{-1}(X_i),
\end{align*}
which implies that for each $\ell \in G_k^\ast$, 
\begin{align*}
0\leq 1- \Prob(Y_{s}\in G_k^\ast \,|\, Y_{s-1} =\ell) = \Prob(Y_{s} \not\in G_{k}^{\ast} \,|\, Y_{s-1}=\ell) \leq n \,\kappa\, \max_{i\in[n]} d_{n,k}^{-1}(X_i) \leq 1.
\end{align*}
Combining the last three displays, we obtain for each $i,j\in G_k^\ast$ and $s\in\bN^{+}$,
\begin{align*}
\big(1- n \,\kappa\, \max_{i\in[n]} d_{n,k}^{-1}(X_i)\big)\,\sum_{\ell\in G_k^\ast} [P_n^{s-1}]_{i\ell} & \cdot [P_{n,k}]_{\ell j} \leq [P_n^{s}]_{ij} \\ 
\leq & \sum_{\ell\in G_k^\ast} [P_n^{s-1}]_{i\ell} \cdot [P_{n,k}]_{\ell j} +  n \,\kappa\, \max_{i\in[n]} d_{n,k}^{-1}(X_i),
\end{align*}
which can be further simplified into
\begin{align*}
\big(1- n \,\kappa\, \max_{i\in[n]} d_{n,k}^{-1}(X_i)\big)\,\big[ [P_n^{s-1}]_{G_k^\ast G_k^\ast} P_{n,k}\big]_{ij} \leq [P_n^{s}]_{ij}\leq \big[ [P_n^{s-1}]_{G_k^\ast G_k^\ast} P_{n,k}\big]_{ij} + n \,\kappa\, \max_{i\in[n]} d_{n,k}^{-1}(X_i).
\end{align*}
Now we can recursively apply this two-sided inequality to get
\begin{align*}
\big(1- n \,\kappa\, \max_{i\in[n]} d_{n,k}^{-1}(X_i)\big)^s\,\big[ P_{n,k}^s]_{ij} \leq [P_n^{s}]_{ij}\leq \big[  P_{n,k}^s ]_{ij} + n\, s \,\kappa\, \max_{i\in[n]} d_{n,k}^{-1}(X_i),\quad\forall i,j\in G_k^\ast.
\end{align*}
By taking $s=2t$ and applying the inequality $(1-x)^s\geq 1-xs$ for $s\in\bN_{+}$ and $x\in[0,1]$, the above can be further reduced into
\begin{align}\label{Eqn:T_1_first_term}
\big\| \,[P_n^{2t}]_{G_k^\ast G_k^\ast} - P_{n,k}^{2t} \big\|_\infty \leq 2n\, t \,\kappa\, \max_{i\in[n]} d_{n,k}^{-1}(X_i).
\end{align}
Then we get 
\[
\big\| \,[P_n^{2t}]_{G_k^\ast G_k^\ast} - P_{n,k}^{2t} \big\|_\infty  \,\max_{i\in G_k^\ast}d_{n,k}^{-1}(X_i) \leq  2n\, t \,\kappa\, \max_{i\in[n]} d_{n,k}^{-2}(X_i),
\]
which is an upper bound to the first term in inequality~\eqref{Eqn:T_1_bound} for $T_1$.

The second term in inequality~\eqref{Eqn:T_1_bound} can be bounded as
\begin{align}\label{Eqn:T_1_second_term}
& \max_{i\in G_k^\ast} |d_n(X_i)-d_{n,k}(X_i)| =\max_{i\in G_k^\ast} \sum_{j\not\in G_k^\ast} \kappa(X_i,X_j) \leq n \kappa.
\end{align}

Finally, by combining \eqref{Eqn:T_1_bound}, \eqref{Eqn:T_1_first_term} and \eqref{Eqn:T_1_second_term}, we obtain
\begin{align*}
T_1 \leq  (2t+1) \, n\,\kappa\,\max_{i\in[n]}d_n^{-2}(X_i).
\end{align*}

\subsection{Proof of Lemma~\ref{Lem:T_2}}\label{Sec:Proof_T_2}
Recall that $\gamma(P_{n,k}) = 1-\lambda_1(P_{n,k})$ denote spectral gap of the transition matrix $P_{n,k}$, where $\lambda_1(P_{n,k})$ denotes the second largest eigenvalue of $P_{n,k}\in \bR^{n_k\times n_k}$ (due to similar arguments as in Appendix B, $P_{n,k}$ has $n_k$ real eigenvalues with $1$ as the largest one). In addition, since the kernel function $k$ is positive semidefinite, all eigenvalues of $P_{n,k}$ are nonnegative, meaning that $\gamma(P_{n,k})$ is equal to the absolute spectral gap $1-\max\{\lambda_1(P_{n,k}),\, \lambda_{n_k-1}(P_{n,k})\}$ where $\lambda_{n_k-1}(P_{n,k})$ is the $n_k$th (smallest) eigenvalue of $P_{n,k}$. 

Therefore, according to the relationship between the mixing time of a Markov chain and its absolute spectral gap (see, for example, equation~(12.11) in \cite{levin2017markov}), we have for each $i,j\in G_k^\ast$,
\begin{equation}
\label{eqn:mixing_T2}
\bigg|\frac{[P_{n,k}^{2t}]_{ij}}{[\pi_{n,k}]_j} - 1\bigg| \leq \frac{e^{-2t\,\gamma(P_{n,k})}}{\min_{\ell \in G_k^\ast} [\pi_{n,k}]_\ell},\quad\forall t=1,2,\ldots,
\end{equation}
where $\pi_{n,k} =\big(d_{n,k}(X_1)/ N_k,\ldots, d_{n,k}(X_{n_k})/ N_k\big)^T\in\bR^{n_k}$ is the limiting distribution of induced Markov chain $\mc W_{n,k}$ over $G_k^\ast$ with transition probability matrix $P_{n,k}$. This leads to a bound on $T_2$ as
\begin{align*}
T_2 & =  \big\| \,P_{n,k}^{2t} D_{n,k}^{-1} - N_k^{-1} \mathbf{1}_{G_k^\ast} \mathbf{1}_{G_k^\ast}^T \big\|_\infty= \max_{i,j\in G_k^\ast}\frac{1}{N_k}\, \bigg|\frac{[P_{n,k}^{2t}]_{ij}}{[\pi_{n,k}]_j} - 1\bigg| \\
&\leq \frac{1}{N_k} \, \frac{e^{-2t\,\gamma(P_{n,k})}}{\min_{\ell \in G_k^\ast} [\pi_{n,k}]_\ell}=e^{-2t\,\gamma(P_{n,k})} \,\max_{i\in G_k^\ast} d_{n,k}^{-1}(X_i),\quad\forall t=1,2,\ldots.
\end{align*}
Therefore, it remains to provide a lower bound on the spectral gap $\gamma(P_{n,k}) = 1-\lambda_1(P_{n,k})$. 
We do so by applying a comparison theorem of Markov chains (Lemma 13.22 in \cite{levin2017markov}), where we compare the spectral gap of $P_{n,k}$ with that of a 
standard random walk on a random geometric graph over $\{X_i\}_{i\in G_k^\ast}$, where each pair of nodes are connected if and only if they are at most $h$ far away from each other. The spectrum of the normalized graph Laplacian of the latter is known to behave like the eigensystem of the Laplace-Beltrami operator over the submanifold corresponding to the $k$-th connected subset $\cM_k$.
In particular, we will use existing results \cite{burago2014graph,trillos2018error} on error estimates by using the spectrum of a random geometric graph to approximate the eigensystem of the Laplace-Beltrami operator in the numerical analysis literature.

Let us first formally define a random geometric graph over $\{X_i\}_{i\in G_k^\ast}$ as i.i.d.~samples from the compact connected $q_k$-dimensional Riemannian submanifold $\cM_k$ in $\bR^{p}$ with bounded diameter, absolute sectional curvature value, and injectivity radius. Recall that $\mu_k$ is a probability measure on $\cM_k$ that has a Lipschitz density $p_k$ with respect to the Riemannian volume measure on $\cM_k$ satisfying~\eqref{Eqn:density_condition}. $\{X_i\}_{i\in G_k^\ast}$ can be viewed as a sequence of i.i.d.~samples from $\mu_k$, and without loss of generality, we may assume $G_k^\ast=\{1,2,\ldots,n_{k}\}$.
Consider the random geometric graph $\mc G_k^{\dagger}=(V_k, E_k)$, with $V_k=\{X_i\}_{i\in G_k^\ast}$ being its set of vertices and $E_k$ set of edges, constructed by putting an edge between $X_i$ and $X_j$ (write $i\sim j$ and call $X_i$ to be a neighbor of $X_j$) if and only if $\|X_i-X_j\| \leq h$. We define the natural random walk $\mc W_{k}^\dagger$ as a reversible Markov chain on $V_k$ that moves to neighbors of the current state with equal probabilities. In other words, the transition probability matrix $\mc P_k^{\dagger}\in \bb R^{n_k\times n_k}$ satisfies
\begin{align*}
[\mc P_k^{\dagger}]_{ij} = \begin{cases}
\displaystyle (d^{\dagger}_{k,i})^{-1}, & \quad\mbox{if } j\sim i\\
\displaystyle 0, &\quad\mbox{otherwise},
\end{cases}
\end{align*}
where $d^\dagger_{k,i} := d^\dagger_{k}(X_{i}) = \sum_{j=1}^{n_k} 1(j\sim i)$ denotes the degree of vertex $X_i$. It is easy to see that $\pi^\dagger_k = (d^\dagger_{k,1}/d^\dagger_k,d^\dagger_{k,2}/d^\dagger_k,\ldots,d^\dagger_{k,n_k}/d^\dagger_k)^T$, where $d^\dagger_k=\sum_{i=1}^{n_k}d^{\dagger}_{k,i}$ denotes the total degree, is the stationary distribution of this random walk. Let $1= \lambda_0( \mc P_k^{\dagger})\geq \lambda_1(\mc P_k^{\dagger})\geq \ldots\geq \lambda_{n-1}(\mc  P_k^{\dagger})\geq -1$ denote the eigenvalues of matrix  $\mc P_k^{\dagger}$, and $\gamma(\mc P_k^{\dagger})=1-\lambda_1(\mc P_k^{\dagger})$ denote its spectral gap.

Let $L_{\mc G_k^\dagger}=D_k^\dagger - A_k^\dagger\in\bb R^{n_k\times n_k}$ denote the graph Laplacian matrix associated with graph $\mc G_k^\dagger=(V_k,E_k)$, where $D_k^\dagger\in\bb R^{n_k\times n_k}$ is a diagonal matrix with $[D^\dagger_k]_{ii} = d_{k,i}^\dagger$, and $A_k^\dagger\in \bb R^{n_k\times n_k}$ is the adjacency matrix with $[A_k^\dagger]_{ij}=1(i\sim j)$ for all distinct pair $(i,j)\in [n_k]^2$. Define the normalized Laplacian of $\mc G_k^\dagger$ as $L^N_{\mc G_k^\dagger}=(D_k^\dagger)^{-1/2}L_{\mc G_k^\dagger} (D_k^\dagger)^{-1/2}= I -  (D_k^\dagger)^{-1/2} A_k^\dagger (D_k^\dagger)^{-1/2}$, and denote its ordered eigenvalues by $0 \leq \lambda_0(L^N_{\mc G_k^\dagger}) \leq \lambda_1(L^N_{\mc G_k^\dagger})\leq \cdots\leq \lambda_{n_k-1}(L^N_{\mc G_k^\dagger})$.
Since $(D_k^\dagger)^{-1/2} A_k^\dagger (D_k^\dagger)^{-1/2}= (D_k^\dagger)^{1/2} \mc P_k^\dagger (D_k^\dagger)^{-1/2}$ is a similarity transformation of $\cP_k^\dagger$, they share the same eigenvalues. Therefore, we have the relation $\lambda_j(L^N_{\mc G_k^\dagger}) = 1-\lambda_j(\mc P_k^\dagger)$ for all $j=0,1,\ldots,n_k-1$. In particular, by taking $j=1$, we can relate the spectral gap of $\mc P_k^\dagger$ with the second smallest eigenvalue of the normalized Laplacian matrix $L^N_{\mc G_k^\dagger}$ as $\gamma(\mc P_k^\dagger) = \lambda_1(L^N_{\mc G_k^\dagger})$.

It is known that the eigenvalues of the normalized Laplacian $L^N_{\mc G_k^\dagger}$ of the geometric random graph $\mc G_k^{\dagger}$ approaches (up to a scaling factor) the eigenvalues of the drift Laplace-Beltrami operator on $\cM_k$. More concretely, let $L^2(\cM_k, \rd \mu_k)$ be the space of all square integrable functions on $\cM_k$, and $\Delta_{\cM_k}$ denote the drift Laplace-Beltrami operator on $\cM_k$ defined in~\eqref{eqn:drift_Laplace-Beltrami} in~Section~\ref{subsec:Laplace-Beltrami_operator}. Let $0=\lambda_0(\cM_k)\leq \lambda_1(\cM_k)\leq\cdots$ denote the sequence of nonnegative eigenvalues of $\Delta_{\cM_k}$. The connectedness of $\cM_k$ implies that its second smallest eigenvalue $\lambda_1(\cM_k)$ is strictly positive. We will invoke Corollary 2 of \cite{trillos2018error}, which generalizes Theorem 1 of \cite{burago2014graph} from the uniform density to any Lipschitz continuous density satisfying~\eqref{Eqn:density_condition}, for relating the spectrum of the drift Laplace-Beltrami operator $\Delta_{\cM_k}$ on $\cM_k$ with the spectrum of the discrete normalized graph Laplacian $L^N_{\mc G_k^\dagger}$, as summarized in the following.

\begin{lem}[Convergence of eigenvalues of normalized graph Laplacian]\label{lem:eigenval_convergence_normalized_graph_Laplacian}
Let $\nu_{q_k}$ to denote the volume of an unit ball in $\bb R^{q_k}$. For each $j=0,1,\ldots$, suppose the radius $h$ and the $\varepsilon_{n,k}$ to be defined below satisfy $(\sqrt{\lambda_j(\cM_k)} + 1)\,h +\varepsilon_{n,k}/h \leq c_1$. Then for any $\xi\geq 1$, it holds with probability at least $1-c_2\,n_k^{-\xi}$
\begin{align*}
\bigg| \frac{2(q_k+2)}{\nu_{q_k} h^2}\cdot\frac{\lambda_j(L^N_{\mc G_k^\dagger})}{\lambda_j(\cM_k)} - 1 \bigg| \leq C\, \bigg(\frac{\varepsilon_{n,k}}{h}+(\sqrt{\lambda_j(\cM_k)} + 1)\,h+ h^2\bigg),
\end{align*}
where constants $c_1,c_2, c_3,C>0$ depend only on the submanifold $\cM_k$ through $\{q_k,{\rm Diam_k}, {\rm  Sec_k}$, ${\rm Inj_k}, {\rm Rch_k}\}$ and the density $p_k$ through $\{\eta_k,{\rm Lip_k}\}$, and
\begin{align*}
\varepsilon_{n,k} = \begin{cases}
\displaystyle\frac{(\log n_k)^{3/4}}{n_k^{1/2}} , & \quad\mbox{if } q_k=2\\[2ex]
\displaystyle \Big(\frac{\log n_k}{n_k}\Big)^{1/q_k}, & \quad\mbox{otherwise}
\end{cases}.
\end{align*}
\end{lem}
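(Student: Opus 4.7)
The plan is to directly invoke Corollary~2 of \cite{trillos2018error} (which generalizes Theorem~1 of \cite{burago2014graph} from the uniform density to the Lipschitz density case) and then perform two small bookkeeping steps to match the precise normalization appearing in our Lemma~\ref{lem:eigenval_convergence_normalized_graph_Laplacian}. The reference works naturally with a weighted \emph{unnormalized} graph Laplacian of the form $\widetilde L_{\mc G_k^\dagger} = \frac{2(q_k+2)}{n_k\, \nu_{q_k}\, h^{q_k+2}} (D_k^\dagger - A_k^\dagger)$, whose leading constant is exactly the second-moment normalization of the uniform distribution on the unit ball in $\bb R^{q_k}$ needed for consistency with the (drift) Laplace--Beltrami operator. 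Under the given assumptions on $h$ and the high-probability event controlling the empirical $\infty$-Wasserstein distance on $\cM_k$ with rate $\varepsilon_{n,k}$, Corollary~2 of \cite{trillos2018error} yields that for each $j$ with $(\sqrt{\lambda_j(\cM_k)}+1)h + \varepsilon_{n,k}/h$ small enough,
\begin{equation*}
\bigl| \lambda_j(\widetilde L_{\mc G_k^\dagger}) - \lambda_j(\cM_k) \bigr| \,\leq\, C\, \lambda_j(\cM_k)\, \Bigl( \tfrac{\varepsilon_{n,k}}{h} + (\sqrt{\lambda_j(\cM_k)}+1)\,h + h^2 \Bigr).
\end{equation*}

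The first bookkeeping step is to verify the high-probability bound on $\varepsilon_{n,k}$, which measures the empirical $\infty$-transportation cost between $\mu_k$ and $\mu_{n,k} = n_k^{-1}\sum_{i\in G_k^\ast}\delta_{X_i}$. For Lipschitz densities bounded above and below on a compact $q_k$-dimensional submanifold, standard manifold transportation estimates (see, e.g., the references inside \cite{trillos2018error}) give the claimed scaling, namely $\varepsilon_{n,k} \lesssim (\log n_k/n_k)^{1/q_k}$ for $q_k\neq 2$ and the extra $(\log n_k)^{3/4}/n_k^{1/2}$ correction in the borderline case $q_k=2$, each with probability at least $1-c_2 n_k^{-\xi}$ after adjusting constants in the standard proof to obtain the polynomial tail.

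The second bookkeeping step is to pass from the \emph{un}normalized weighted Laplacian $\widetilde L_{\mc G_k^\dagger}$ to the \emph{symmetrically} normalized $L^N_{\mc G_k^\dagger} = I - (D_k^\dagger)^{-1/2} A_k^\dagger (D_k^\dagger)^{-1/2}$ appearing in the statement. Using Lemma~\ref{lem:node_degree} on the concentration of degrees, one has $d_{k,i}^\dagger = n_k\,\nu_{q_k}\, h^{q_k}\, p_k(X_i)\,(1+\eta_i)$ uniformly over $i\in G_k^\ast$, with $\max_i |\eta_i| \leq C(h^2 + \sqrt{\log n_k/(n_k h^{q_k})})$ on the same high-probability event, which is dominated by the error terms already present. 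Since $L^N_{\mc G_k^\dagger}$ and $(D_k^\dagger)^{-1}(D_k^\dagger - A_k^\dagger)$ are related by a similarity transformation, they have identical spectra; the remaining task is to absorb the $p_k$-factor and the overall $\tfrac{2(q_k+2)}{n_k\nu_{q_k}h^{q_k+2}}$ prefactor into the stated normalization $\tfrac{2(q_k+2)}{\nu_{q_k}h^2}\cdot\tfrac{1}{\lambda_j(\cM_k)}$ after multiplying through by the harmonic average of degrees. A first-order Weyl-type perturbation bound then converts the uniform multiplicative degree error into an additive perturbation on eigenvalues that is absorbable into the three error terms on the right-hand side.

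The main technical obstacle will be the second step: the degree normalization used in $L^N_{\mc G_k^\dagger}$ introduces an $X_i$-dependent weighting, and one must check that the associated Rayleigh quotient perturbation is controlled uniformly in $j$ over the range of eigenvalues we care about, without spoiling the $(\sqrt{\lambda_j(\cM_k)}+1)h$ rate. This is ultimately a consequence of the Lipschitz property of $p_k$ combined with the fact that the finite-difference operator at scale $h$ only couples nearby values of $p_k$; but it requires one to redo the test-function comparison in \cite{trillos2018error} with the reweighting $p_k^{-1}$ absorbed into the stationary distribution, mirroring the drift correction $-2\langle \nabla\log p_k,\nabla f\rangle$ in the definition~\eqref{eqn:drift_Laplace-Beltrami} of $\Delta_{\cM_k}$. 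Once these perturbations are verified to be lower order than the rate stated, the lemma follows from a triangle inequality.
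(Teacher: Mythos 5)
Your proposal matches the paper's treatment: Lemma~\ref{lem:eigenval_convergence_normalized_graph_Laplacian} is established there purely by citation to Corollary~2 of \cite{trillos2018error} (which generalizes Theorem~1 of \cite{burago2014graph} from uniform to Lipschitz densities satisfying~\eqref{Eqn:density_condition}), with no further argument supplied. Note that the cited corollary is already stated for the normalized graph Laplacian $L^N_{\mc G_k^\dagger}$ converging to the \emph{drift} Laplace--Beltrami operator of~\eqref{eqn:drift_Laplace-Beltrami}, so your second bookkeeping step --- passing from the unnormalized weighted Laplacian and re-absorbing the $p_k$-dependent degree weighting, which you correctly flag as the delicate point --- is unnecessary under the paper's reading of the reference, and your verification of the $\varepsilon_{n,k}$ rate via $\infty$-transportation estimates is likewise part of what the citation supplies.
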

In particular, this lemma (taking $j=1$) implies a lower bound on the spectral gap of $\cP_k^\dagger$ as
\begin{align}\label{Eqn:RGG_spectral_lower_bound}
\gamma(\cP_k^\dagger) \geq \bigg(1- C\,\Big(c_1^{-1}+(\sqrt{\lambda_1(\cM_k)} + 1)\,h+ h^2\Big)\bigg) \,\frac{\nu_{q_k}}{2(q_k+2)} \, \lambda_1(\cM_k)\,h^2
\end{align}
as long as $h \geq c_1 \varepsilon_{n,k}$.

Next, we will apply the following comparison theorem to relate the spectral gaps of Markov chains $\mc W_{n,k}$ and $\mc W_{k}^\dagger$. 

\begin{lem}[Markov chains comparison theorem (Lemma 13.22 in \cite{levin2017markov})]
\label{lem:comparison_thm}
Let $P$ and $P'$ be transition matrices of two reversible Markov chains on the same state space $\Omega$, whose stationary distributions are denoted by $\pi$ and $\pi'$, respectively.
Let $\mc E(f)$ and $\mc E'(f)$ denote the Dirichlet forms associated to the pairs $(P,\pi)$ and $(P',\pi')$, where
\begin{align}\label{Eqn:Dirichlet_form}
\mc E(f) =\frac{1}{2} \sum_{x,y\in\Omega} [\,f(x) - f(y)]^2\,\pi(x)\, P(x,y),\quad \forall f\in L^2(\Omega),
\end{align}
and $\mc E'(f)$ can be similarly defined. If there exists some constant $B>0$ such that $\mc E'(f) \leq B \,\mc E(f)$ for all $f$, then
\begin{align*}
\gamma(P') \leq \bigg[\max_{x\in\Omega} \frac{\pi(x)}{\pi'(x)}\bigg] \, B\,\gamma(P),
\end{align*}
where $\gamma(P)$ and $\gamma(P')$ denote the spectral gaps associated with $P$ and $P'$, respectively.
\end{lem}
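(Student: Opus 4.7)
The plan is to prove the comparison bound via the variational (Dirichlet) characterization of the spectral gap for reversible Markov chains. By reversibility, the operator $I-P$ is self-adjoint with respect to the inner product $\langle f,g\rangle_\pi = \sum_{x\in\Omega} f(x)\,g(x)\,\pi(x)$ on $L^2(\pi)$, and a short computation using reversibility (the detailed balance condition) shows that
\[
\mathcal{E}(f) \;=\; \langle f,(I-P)f\rangle_\pi.
\]
Since $I-P$ annihilates the constants (and $\pi$ has full support on $\Omega$), the Courant-Fischer / min-max principle applied to the self-adjoint operator $I-P$ on the $\pi$-orthogonal complement of the constants gives
\[
\gamma(P) \;=\; \inf\!\left\{\frac{\mathcal{E}(f)}{\operatorname{Var}_\pi(f)} \,:\, f:\Omega\to\mathbb{R},\ \operatorname{Var}_\pi(f) > 0\right\},
\]
with the analogous identity for $(P',\pi')$. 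This reduces the spectral-gap comparison to comparing the Dirichlet form and the variance separately.

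The Dirichlet-form side is handled directly by the hypothesis $\mathcal{E}'(f)\le B\,\mathcal{E}(f)$. For the variance side, let $M := \max_{y\in\Omega}\pi(y)/\pi'(y)$, so that the pointwise bound $\pi'(x)\ge \pi(x)/M$ holds uniformly in $x$. Using the well-known characterization of the variance as a minimum over constants,
\[
\operatorname{Var}_{\pi'}(f) \;=\; \min_{c\in\mathbb{R}} \sum_{x\in\Omega} \bigl(f(x)-c\bigr)^2 \pi'(x) \;\ge\; \frac{1}{M}\,\min_{c\in\mathbb{R}} \sum_{x\in\Omega}\bigl(f(x)-c\bigr)^2 \pi(x) \;=\; \frac{\operatorname{Var}_\pi(f)}{M}.
\]

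Combining the two bounds gives, for every nonconstant $f$,
\[
\frac{\mathcal{E}'(f)}{\operatorname{Var}_{\pi'}(f)} \;\le\; \frac{B\,\mathcal{E}(f)}{\operatorname{Var}_\pi(f)/M} \;=\; BM\cdot\frac{\mathcal{E}(f)}{\operatorname{Var}_\pi(f)}.
\]
Taking the infimum over $f$ and invoking the variational characterization on both sides yields $\gamma(P')\le BM\,\gamma(P)$, which is precisely the claimed inequality. I do not anticipate any real obstacle here: the only nontrivial input is the variational principle (whose validity relies on reversibility to make $I-P$ self-adjoint in $L^2(\pi)$), and the remainder is a one-line pointwise comparison of variances using the density ratio $M$.
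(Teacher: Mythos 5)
The paper does not prove this lemma at all---it is quoted verbatim as Lemma 13.22 of \cite{levin2017markov}---so there is no in-paper argument to compare against. Your proof is correct and is essentially the standard textbook proof of that result: reversibility gives $\mathcal E(f)=\langle f,(I-P)f\rangle_\pi$ and hence the variational characterization $\gamma(P)=\inf_f \mathcal E(f)/\operatorname{Var}_\pi(f)$, the hypothesis handles the numerator, and the pointwise bound $\pi'(x)\geq \pi(x)/M$ with $M=\max_x \pi(x)/\pi'(x)$ handles the denominator via the minimum-over-constants form of the variance; the only implicit assumption worth flagging is that $\pi$ has full support (irreducibility), so that $\operatorname{Var}_\pi(f)=0$ forces $f$ constant and the two infima range over the same set of test functions.
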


We will apply this comparison theorem with $P_{n,k}\to P, \cP_{k}^\dagger \to P'$, and $\Omega = G_{k}^{*}$. Let us find the constant $B$ such that $\mc E'(f)\leq B\, \mc E(f)$ for all $f$, where in our setting,
\begin{align*}
\mc E(f) &=\frac{1}{2 N_k} \sum_{1\leq i,j\leq n_k} (\,f_i - f_j)^2\,\kappa(X_i,X_j),\quad\mbox{and}\\
\mc E'(f) &=\frac{1}{2 d_k^\dagger} \sum_{(i,j):\, \|X_i-X_j\| \leq h} (\,f_i - f_j)^2,\quad \forall f=(f_1,\ldots,f_{n_k})^T\in\bR^{n_k}.
\end{align*}
According to the definition of $\tau$ as $\inf_{x,y\in S:\, \|x-y\| \leq h} \kappa(x,y)$, we can simply choose $B = (N_k/d_k^\dagger)\, \tau^{-1}$. In addition, we have the bound
\begin{align*}
\max_{i\in G_k^\ast} \frac{[\pi_{n,k}]_i}{[\pi_k^\dagger]_i} = \frac{d_k^\dagger}{\tilde N_k}\, \max_{i\in G_k^\ast}  \frac{d_{n,k}(X_i)}{d_{k,i}^\dagger}.
\end{align*}
Therefore, we can apply Lemma~\ref{lem:comparison_thm} to get
\begin{align*}
\gamma(P_{n,k}) \geq \tau \bigg[\min_{i\in G_k^\ast}  \frac{d_{k,i}^\dagger}{d_{n,k}(X_i)}\bigg]\, \gamma(\cP_{k}^\dagger),
\end{align*}
which combined with inequality~\eqref{Eqn:RGG_spectral_lower_bound} leads to
\begin{align*}
\gamma(P_{n,k}) \geq \bigg(1- C\,\Big(c_1^{-1}+(\sqrt{\lambda_1(\cM_k)} + 1)\,h+ h^2\Big)\bigg) \,\bigg[\min_{i\in G_k^\ast}  \frac{d_{k,i}^\dagger}{d_{n,k}(X_i)}\bigg]\,\frac{\tau\nu_{q_k}\, \lambda_1(\cM_k)\,h^2}{2(q_k+2)}.
\end{align*}

\subsection{Proof of Lemma~\ref{lem:node_degree}}\label{Sec:Proof_lem:node_degree}
Recall that $\kappa(x,y)=\exp\{-\|x-y\|^2/(2h^2)\}$ is the Gaussian kernel with bandwidth parameter $h>0$. For $x \in S$, define $d_{n,k}(x) = \sum_{j \in G_{k}^{*}} \kappa(x, X_{j})$ as the induced degree function of $x$ within cluster $G_{k}^{*}$. Then for each $i\in G_k^\ast$ we have $d_{n,k}(X_{i}) = 1+\sum_{j\in G_k^\ast,\,j\neq i} \kappa(X_i,X_j) =: 1+\tilde{d}_{n,k}(X_{i})$. Denote $\alpha_{k}(x) = \E_{X \sim p_{k}} \kappa(x,\, X)$ and $v_{k}(x) = \Var_{X \sim p_{k}}[\kappa(x,\, X)]$. Applying Lemma~\ref{lem:expectation_variance_bound} with $\mc M=\cM_k$ and $f(x)=p_k(x)$, we have $v_{k}(x) \leq C\alpha_{k}(x) \leq C h^{q_{k}}$ for all $x \in \cM_{k}$, where constant $C$ only depends on $\{q_k,{\rm Sec_k},{\rm Inj_k},\eta_k,{\rm Lip_k}\}$. Then for any fixed $x \in \cM_{k}$, using the bound in~\eqref{Eqn:variance_bound} and the boundedness of $\kappa$, we may apply Bernstein inequality (cf. Lemma 2.2.9 in \cite{vandervaartwellner1996}) to obtain that for all $u>0$,
\begin{align*}
\Prob \left( \Big|d_{n,k}(x) - n_k \alpha_{k}(x) \Big| \geq u \right) \leq 2 \exp\left( -{u^{2} \over 2 C n_{k} \alpha_{k}(x) + {2 \over 3} u} \right). 
\end{align*}
Choosing $u = t n_{k} \alpha_{k}(x)$ for $t \in (0, C]$, we have 
\begin{equation}
\label{Eqn:degree_con}
\Prob \left( \Big|d_{n,k}(x) - n_k \alpha_{k}(x) \Big| \geq t n_{k} \alpha_{k}(x) \right) \leq 2 \exp \left( -{t^{2} n_{k} \alpha_{k}(x) \over 2 C + {2 \over 3} t} \right) \le 2 \exp \left( -C n_{k} \alpha_{k}(x) t^{2} \right).
\end{equation}
Now choosing $t = c_{1} \sqrt{\log(n_{k})/(\alpha_{k}(x) n_{k})}$ for some large enough constant $c_{1}$ so that $Cc_1^2\geq \xi$, we get 
\[
\Prob \left( \Big|d_{n,k}(x) - n_k \alpha_{k}(x) \Big| \geq c_{1} \sqrt{\alpha_{k}(x) n_{k} \log{n_{k}}} \right) \leq 2 n_{k}^{-C c_{1}^{2}}\leq 2 n_k^{-\xi},
\]
provided that $\log(n_{k}) \leq C \alpha_{k}(x) n_{k} \leq C n_{k} h^{q_{k}}$ in view of the uniform bounds~\eqref{Eqn:expectation_bound} and~\eqref{Eqn:density_condition}. But this is ensured by our bandwidth assumption~\eqref{eqn:bandwidth_condition_nonadaptive}. Thus for any fixed $x \in \cM_{k}$, we have with probability at least $1 - c_{2} n_{k}^{-c_{3}}$, 
\[
\left| {d_{n,k}(x)  \over n_{k}} - \alpha_{k}(x) \right| \leq c_{1} \sqrt{\alpha_{k}(x) \log{n_{k}} \over n_{k}}.
\]
Then it follows that with probability at least $1 - c_{2} n_{k}^{-\xi}$, 
\[
\left| {d_{n,k}(x)  \over n_{k} (\sqrt{2\pi} h)^{q_{k}}} - p_{k}(x) \right| \leq {C \over (\sqrt{2\pi})^{q_{k}}} h  +  {c_{1} \over (\sqrt{2\pi} h)^{q_{k}}} \sqrt{\alpha_{k}(x) \log{n_{k}} \over n_{k}} \leq C \left( h + \sqrt{\log{n_{k}} \over n_{k} h^{q_{k}}} \right).
\]
This implies that the rescaled degree function $n_k^{-1}\,\big(\sqrt{2\pi} h\big)^{-q_k}\,d_{n,k}(x)$ provides a good estimate of the density $p_k(x)$ at $x$. Since $X_{i} \in \cM_{k}$ are i.i.d. for $i \in G_{k}^{*}$, we have with probability at least $1 - c_{2} n_{k}^{-\xi}$, 
\[
\left| {\tilde{d}_{n,k}(X_{i})  \over (n_{k}-1) (\sqrt{2\pi} h)^{q_{k}}} - p_{k}(X_{i}) \right| \leq C \left( h + \sqrt{\log{n_{k}} \over n_{k} h^{q_{k}}} \right).
\]
Then union bound implies that 
\[
\max_{i \in G_{k}^{*}} \left| {d_{n,k}(X_{i})  \over n_{k} (\sqrt{2\pi} h)^{q_{k}}} - p_{k}(X_{i}) \right| \leq C \left( h + \sqrt{\log{n_{k}} \over n_{k} h^{q_{k}}} + {1 \over n_{k} h^{q_{k}}} \right) \leq C \left( h + \sqrt{\log{n_{k}} \over n_{k} h^{q_{k}}} \right)
\]
with probability at least $1-c_{2}n_{k}^{-\xi}$. 

The second part about the concentration of $d_{k}^\dagger(X_{i})$ can be analogously proved by applying the Chernoff bound for sum of i.i.d.~Bernoulli random variables. Let 
\begin{align*}
d_{k}^\dagger(x) = \sum_{j \in G_{k}^{*}} 1(\|x-X_j\|\leq h)
\end{align*}
so that $d_{k}^\dagger(X_{i}) = 1+\tilde{d}_{k}^\dagger(X_{i})$ where $\tilde{d}_{k}^\dagger(X_{i}) = \sum_{j \in G_{k}^{*}, j\neq i} 1(\|X_{i}-X_j\|\leq h)$. Note that Section 2.2 of \cite{burago2014graph} provides a uniform estimate of the expectation $\bb E_{X\sim p_k} [1(\|x-X\|\leq h)]$ in terms of the density $p(x)$ as
\begin{align}\label{Eqn:Expected-degree}
\sup_{x \in \cM_{k}} \Big|\bb E_{X\sim p_k} [1(\|x-X\|\leq h)] - \nu_{q_k} \, h^{q_k}\,p_k(x) \Big| \leq C\, h^{q_k+2},
\end{align}
where recall that $\nu_{q_k}$ denotes the volume of unit ball in $\bR^{q_k}$. 
The rest of the proof follows a similar line as the proof of the first part, and we omit the details.


\begin{lem}\label{lem:expectation_variance_bound}
Let $\mc M$ be a $q$-dimensional compact submanifold in $\bR^p$ with bounded absolute sectional curvature ${\rm Sec}$ and injective radius ${\rm Inj}$, and $\mbox{Vol}_{\mc M}$ denote its volume form. Let $f$ be a ${\rm Lip}$-Lipschitz continuous probability density function on $\cM$ such that $\eta \leq f(x) \leq \eta^{-1}$ for some constant $\eta > 0$. Let $\alpha(x) = \E_{X \sim f} [\kappa(x,\, X)]$ and $v(x) = \Var_{X \sim f}[\kappa(x,\, X)]$. Then we have 
\begin{equation}
\label{Eqn:expectation_bound}
\sup_{x\in \cM}  \Big| \alpha(x)  - \big(\sqrt{2\pi} h\big)^{q} \,f(x)\Big| \leq C \, h^{q+1}
\end{equation}
and $v(x) \leq C \alpha(x)$ for all $x \in \cM$, where the constant $C$ only depends $\mc M$ through $\{q,{\rm Sec},{\rm Inj}\}$ and $f$ through $\{\eta,{\rm Lip}\}$. Consequently, we have $\sup_{x \in \cM} \alpha(x) \leq C h^{q}$ and $\sup_{x \in \cM} v(x) \leq C h^{q}$. 
\end{lem}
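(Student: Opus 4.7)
The strategy is to localize the integral defining $\alpha(x)$ in a geodesic ball of radius smaller than the injectivity radius, push it forward to the tangent space via the exponential map, and then compare to the Gaussian integral on $\bR^q$. Pick $r_0 = \tfrac12\min\{{\rm Inj}, \pi/\sqrt{{\rm Sec}}\}$ so that for every $x\in\cM$ the map $\exp_x : B_{r_0}(0)\subset T_x\cM\cong\bR^q \to \cM$ is a diffeomorphism with uniformly controlled Jacobian. Write
\begin{align*}
\alpha(x) \;=\; \int_{B_{r_0}(x)\cap\cM}\!\kappa(x,y)\,f(y)\,d\mbox{Vol}_{\cM}(y) \;+\; \int_{\cM\setminus B_{r_0}(x)}\!\kappa(x,y)\,f(y)\,d\mbox{Vol}_{\cM}(y),
\end{align*}
so the far-field piece is bounded by $\eta^{-1}\mbox{Vol}(\cM)\,e^{-r_0^2/(2h^2)}$, which is $O(h^{q+2})$ for $h$ small; for $h$ of order~$1$ the lemma is trivial up to absorbing constants into $C$, so I may assume $h \leq r_0$.

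For the near-field piece change variables $y = \exp_x(v)$ with $v\in B_{r_0}(0)\subset T_x\cM$. Three geometric facts enter, each with constants uniform in $x$ because $\cM$ is compact with bounded curvature and injectivity radius (all quotable from Rauch comparison, e.g.\ as stated in \cite{burago2014graph,trillos2018error}): (i)~the embedded chord length obeys $\|y-x\|^2 = \|v\|^2 + O({\rm Rch}^{-2}\|v\|^4)$; (ii)~the Riemannian volume density in normal coordinates is $\sqrt{\det g(v)} = 1 + O({\rm Sec}\,\|v\|^2)$; (iii)~Lipschitz continuity gives $f(\exp_x(v)) = f(x) + O({\rm Lip}\,\|v\|)$. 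Substituting and using $e^{-\|v\|^2(1+O(\|v\|^2))/(2h^2)} = e^{-\|v\|^2/(2h^2)}\bigl(1 + O(\|v\|^4/h^2)\bigr)$ (valid since $\|v\|\leq r_0$ keeps the correction bounded), I obtain
\begin{align*}
\alpha(x) \;=\; f(x)\!\int_{\bR^q}\! e^{-\|v\|^2/(2h^2)}\,dv \;+\; R(x),
\end{align*}
where the remainder $R(x)$ is controlled by Gaussian moments on $\bR^q$: $\int \|v\|\,e^{-\|v\|^2/(2h^2)}\,dv \lesssim h^{q+1}$ from the Lipschitz term, and $\int \|v\|^2\,e^{-\|v\|^2/(2h^2)}\,dv \lesssim h^{q+2}$ from the metric/chord corrections. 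Since $\int_{\bR^q} e^{-\|v\|^2/(2h^2)}\,dv = (\sqrt{2\pi}\,h)^q$, this yields $|\alpha(x)-(\sqrt{2\pi}h)^q f(x)| \leq C h^{q+1}$ uniformly in $x$, and in particular $\alpha(x) \leq C h^q$ and (from $f\geq \eta$) $\alpha(x) \geq c h^q$ once $h$ is small.

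For the variance, observe that $\kappa(x,y)^2 = \exp(-\|x-y\|^2/h^2)$ is itself a Gaussian kernel with bandwidth $\tilde h = h/\sqrt{2}$, so applying the same expansion gives $\E_{X\sim f}[\kappa(x,X)^2] = (\sqrt{\pi}\,h)^q f(x) + O(h^{q+1}) \leq C h^q$. Combined with the matching lower bound $\alpha(x) \geq c h^q$, this delivers $v(x) \leq \E[\kappa(x,X)^2] \leq C \alpha(x)$. The only subtle point is ensuring all the $O$-constants are uniform in $x \in \cM$; this is the step that genuinely uses the compactness together with the bounded ${\rm Sec}, {\rm Inj}, {\rm Rch}$ assumptions, as they yield a uniform-in-$x$ control of $\exp_x$ and of the chord-to-geodesic comparison on $B_{r_0}(x)\cap\cM$. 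The rest is classical bookkeeping of Gaussian moments.
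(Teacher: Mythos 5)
Your proof is correct and follows essentially the same route as the paper: establish the uniform convolution estimate $\int_{\cM}e^{-\|x-y\|^2/(2h^2)}f(y)\,\rd\mbox{Vol}_{\cM}(y)=(\sqrt{2\pi}h)^qf(x)+O(h^{q+1})$, apply it once to $\kappa$ and once to $\kappa^2$ (a Gaussian with bandwidth $h/\sqrt{2}$), and then convert $v(x)\leq \E[\kappa(x,X)^2]\leq Ch^q$ into $v(x)\leq C\alpha(x)$ via the lower bound $\alpha(x)\geq c\,h^q$ coming from $f\geq\eta$. The only difference is that the paper outsources the convolution estimate to a quoted result (Lemma 5.2 of \cite{yang2016bayesian}), whereas you prove it directly by localizing to a ball of radius $r_0$, pushing forward through $\exp_x$, and expanding the chord length, volume density, and density $f$; this is the standard argument and your bookkeeping of the Gaussian moments is right. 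One small point: your chord-to-geodesic comparison (and the identification of the Euclidean-near set with the image of $\exp_x$ on a geodesic ball) genuinely uses a lower bound on the reach, which the lemma's stated constant dependencies $\{q,{\rm Sec},{\rm Inj},\eta,{\rm Lip}\}$ omit — though the paper's own statement has the same omission and the reach is assumed bounded in the main theorems, so this is cosmetic rather than a gap.
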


\begin{proof}[Proof of Lemma~\ref{lem:expectation_variance_bound}]
Note that for each $x\in \mc M$ and any Lipschitz probability density function $f$ on $\cM$, the expectation $\bb E[\kappa(x,\, X)^2]$ takes the form
\begin{align*}
\bb E_{X \sim f} [\kappa(x,\, X)] = \int_{\mc M} \exp\{-\|x-y\|^2/(2h^2)\} \, f(y)\, \rd\mbox{Vol}_{\cM} (y).
\end{align*} 
Then~\eqref{Eqn:expectation_bound} follows from Lemma~\ref{Lem:convolution_bound}. Similarly, we can bound the variance of $\kappa(x,\, X)$ via $\mbox{Var}_{X \sim f} [\kappa(x,\, X)] \leq \bb E_{X \sim f}  [\kappa(x,\, X)^2]$, where
\begin{align*}
\bb E_{X \sim f}  [\kappa(x,\, X)^2]= \int_{\mc M} \exp\{-\|x-y\|^2/(h^2)\} \, f(y)\, \rd\mbox{Vol}_{\cM} (y).
\end{align*}
By a second application of Lemma~\ref{Lem:convolution_bound} with $h/\sqrt{2} \to h$ to obtain
\begin{align*}
\Big| \bb E_{X \sim f}   [\kappa(x,\, X)^2] - \big(\sqrt{\pi} h\big)^{q} \,f(x)\Big| \leq C' \, h^{q+1}.
\end{align*}
This together with the uniform boundedness condition on $f$  and inequality~\eqref{Eqn:expectation_bound} imply an upper bound to the variance by the expectation,
\begin{align}\label{Eqn:variance_bound}
\mbox{Var}_{X \sim f}  [\kappa(x,\, X)]  \leq C \,h^{q}\leq C \,\bb E_{X \sim f} [\kappa(x,\, X)],\quad \mbox{for some $C>0$}.
\end{align}
The bounds $\sup_{x \in \cM} \alpha(x) \leq C h^{q}$ and $\sup_{x \in \cM} v(x) \leq C h^{q}$ follow from the fact that $h \leq c$. 
\end{proof}

\begin{lem}[Convolution over manifold]
\label{Lem:convolution_bound}
Let $\mc M$ be a $q$-dimensional compact submanifold in $\bR^p$ with bounded absolute sectional curvature ${\rm Sec}$ and injective radius ${\rm Inj}$, and $\mbox{Vol}_{\mc M}$ denote its volume form., and $\mbox{Vol}_{\mc M}$ denote its volume form. Then there exists some constant $h_0>0$ only depending on $\{{\rm Sec},{\rm Inj}\}$, such that for all $h\in(0, h_0]$ and any ${\rm Lip}$-Lipschitz continuous function $f$ on $\mc M$, we have 
\begin{align*}
\bigg| \frac{1}{\big(\sqrt{2\pi} h\big)^q}\,\int_{\mc M} \exp\{-\|x-y\|^2/(2h^2)\} \, f(y)\, \rd\mbox{Vol}_{\mc M} (y) - f(x)\bigg| \leq  C\,h,
\end{align*}
where constant $C$ only depends on $\{{\rm Sec},{\rm Inj},{\rm Lip}\}$.
\end{lem}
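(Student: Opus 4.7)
The plan is to localize the integral via the exponential map at $x$ and compare it to a full Gaussian integral on $\bR^q$. Fix $x\in\mc M$ and choose $r=\min\{{\rm Inj}/2,\,c_0\}$ with $c_0>0$ a small constant depending only on ${\rm Sec}$ so that $\exp_x:\,B_r(0)\subset T_x\mc M\to\mc M$ is a diffeomorphism onto its image, where $T_x\mc M\cong\bR^q$ is identified via a choice of orthonormal basis and $B_r(0)$ is the Euclidean ball of radius $r$. Writing
\begin{align*}
I(x) := \int_{\mc M} e^{-\|x-y\|^2/(2h^2)}\,f(y)\,\rd\mbox{Vol}_{\mc M}(y),
\end{align*}
I split $I(x)=I_{\rm near}(x)+I_{\rm far}(x)$ into the integrals over $\exp_x(B_r(0))$ and its complement in $\mc M$.

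For $I_{\rm far}$, compactness of $\mc M$ together with the Euclidean embedding yields a constant $c_1>0$ depending only on $\{{\rm Sec},{\rm Inj}\}$ such that $\|x-y\|\geq c_1 r$ for all $y\in\mc M\setminus\exp_x(B_r(0))$; hence $I_{\rm far}(x)\leq \|f\|_\infty\,\mbox{Vol}(\mc M)\,e^{-c_1^2 r^2/(2h^2)}$, which is super-polynomially small in $h$ and easily absorbed into an error of order $h^{q+1}$ once $h\leq h_0$ for some $h_0$ depending on these constants.

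For $I_{\rm near}$, substitute $y=\exp_x(v)$ with $v\in B_r(0)$ and use three standard Riemannian estimates: (i) the volume form expansion $\rd\mbox{Vol}_{\mc M}(\exp_x(v))=(1+O(|v|^2))\,\rd v$, with constants controlled by ${\rm Sec}$; (ii) the chord-geodesic comparison $\big|\,\|x-\exp_x(v)\|^2-|v|^2\,\big|\leq C_2|v|^4$, coming from $\exp_x(v)=x+v+\tfrac12 II_x(v,v)+O(|v|^3)$ with $II_x\perp T_x\mc M$, the second fundamental form $II_x$ being bounded in terms of ${\rm Sec}$ via the Gauss equations; and (iii) Lipschitz continuity $|f(\exp_x(v))-f(x)|\leq {\rm Lip}\cdot|v|$. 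Combining these with the elementary inequality $|e^{-a}-e^{-b}|\leq e^{-\min(a,b)}|a-b|$ allows me to decompose
\begin{align*}
I_{\rm near}(x)=f(x)\int_{B_r(0)} e^{-|v|^2/(2h^2)}\,\rd v+R_1+R_2+R_3,
\end{align*}
where $R_1$ captures the Lipschitz correction, $R_2$ the volume-form distortion, and $R_3$ the exponent perturbation. Standard Gaussian moment calculations then give $|R_1|\leq {\rm Lip}\int e^{-|v|^2/(2h^2)}|v|\,\rd v\leq Ch^{q+1}$, while $|R_2|,\,|R_3|\leq Ch^{q+2}$; for $R_3$ an inner split at $|v|\leq\sqrt{h}$ keeps $|v|^4/h^2$ bounded, and the complementary region contributes only a super-polynomially small Gaussian tail. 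Extending the leading integral from $B_r(0)$ to all of $\bR^q$ costs only a further super-polynomially small tail error. Assembling,
\begin{align*}
I(x)=(\sqrt{2\pi}\,h)^q f(x)+O(h^{q+1}),
\end{align*}
with constants depending only on $\{q,{\rm Sec},{\rm Inj},{\rm Lip}\}$, and dividing through by $(\sqrt{2\pi}\,h)^q$ yields the claimed bound $Ch$.

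The main technical obstacle is item (ii), the chord-versus-geodesic comparison. It is standard but hinges on controlling the second fundamental form of the embedding uniformly over $\mc M$, which is where the curvature and injectivity-radius bounds enter quantitatively; items (i) and (iii) are routine consequences of the geometric hypotheses, and everything else is elementary Gaussian bookkeeping.
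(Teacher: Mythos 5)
Your overall route -- localize through the exponential map, compare the chordal and geodesic distances, expand the volume form, use the Lipschitz bound, and finish with Gaussian moment estimates -- is the standard one; the paper itself gives no argument at all and simply cites Lemma 5.2 of Yang and Dunson (2016), so a self-contained proof along these lines is exactly what one would want to supply. The Lipschitz correction $R_1$, the volume-distortion term $R_2$, the exponent-perturbation term $R_3$, and the tail estimates are all handled correctly.

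There is, however, a genuine gap in the two places where you pass from intrinsic to extrinsic geometry, and it is the same gap twice. First, in the far-field bound you assert that $y\in\mc M\setminus\exp_x(B_r(0))$ forces $\|x-y\|\geq c_1 r$ with $c_1$ depending only on $\{{\rm Sec},{\rm Inj}\}$. This is false in general: bounded sectional curvature and injectivity radius do not prevent two geodesically distant pieces of $\mc M$ from being arbitrarily close in $\bR^p$ (e.g.\ a smooth closed plane curve whose two nearly parallel arcs approach each other at a gently curved waist has bounded geodesic curvature and injectivity radius comparable to its length, yet Euclidean self-distance tending to zero). When that happens the kernel picks up an extra $\Theta(h^q)$ of mass from the other sheet and the conclusion itself fails, so some extrinsic hypothesis is indispensable. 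Second, your justification of the chord--geodesic comparison claims the second fundamental form is ``bounded in terms of ${\rm Sec}$ via the Gauss equations''; the implication goes the other way. The Gauss equation expresses the intrinsic curvature as a quadratic in $II$, so bounded $II$ controls ${\rm Sec}$ but not conversely (a flat sheet rolled into a tight cylinder has ${\rm Sec}=0$ and $\|II\|$ arbitrarily large). Both steps are repaired by invoking the reach: the paper's standing assumptions include ${\rm Rch}_k^{-1}$ for each submanifold, and positive reach gives both $\|II\|\leq {\rm Rch}^{-1}$ (hence your expansion $\exp_x(v)=x+v+\tfrac12 II_x(v,v)+O(|v|^3)$ with controlled constants) and the equivalence of chordal and geodesic distances at scales below ${\rm Rch}$, which yields the far-field separation. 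So your argument is salvageable, but as written it attributes to $\{{\rm Sec},{\rm Inj}\}$ control that only the reach provides; the dependence of $h_0$ and $C$ on ${\rm Rch}$ should be made explicit.
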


\begin{proof}[Proof of Lemma~\ref{Lem:convolution_bound}] 
This lemma follows from Lemma 5.2 on pages 895--898 in \cite{yang2016bayesian}.
\end{proof}

\subsection{Proof of Lemma~\ref{Lemma:total_degree}}\label{Sec:Proof_total_degree}
Recall that $N_k=\sum_{i,j\in G^\ast_k} \kappa(X_i,X_j)$ is the total within-weight in $G^\ast_k$. According to Lemma~\ref{lem:node_degree} (note that $N_k=\sum_{i\in G_k^\ast} d_{n,k}(X_i)$ using the notation therein), it holds with probability at least $1-c_2 n_k^{-\xi}$ that
\begin{align*}
\bigg| \frac{N_k}{n_k} - (\sqrt{2\pi}\, h)^{q_k}\, \sum_{i\in G_k^\ast} p_k(X_i)\bigg| \leq C\, n_{k} h^{q_k}\,\bigg(h + \sqrt{\frac{\log n_k}{n_kh^{q_{k}}}} \, \bigg).
\end{align*}
According to the sandwiched bound on the density function $p_k$, $\{p_k(X_i):\,i\in G_k^\ast\}$ are independent and bounded random variables. Therefore, we may apply Hoeffding's inequality to obtain that 
\begin{align*}
\bigg|\sum_{i\in G_k^\ast} p_k(X_i) - n_k \,\beta_k\bigg| \leq C\,\sqrt{n_k\log n_k}
\end{align*}
holds with probability at least $1-c_2 n_k^{-\xi}$, where $\beta_k=\mathbb E[p_k(X_i)]$. Combining the two preceding inequalities, we obtain that 
\begin{align*}
\bigg| \frac{N_k}{n_k} - (\sqrt{2\pi}\, h)^{q_k} \,n_k \beta_k\bigg| \leq C \left(  n_{k} h^{q_{k}+1} + \sqrt{n_kh^{q_k}\log n_k} \right),
\end{align*}
which completes the proof.

\subsection{Proof of Theorem~\ref{thm:main_adaptive_h}}\label{Sec:Proof_thm:main_adaptive_h}
\paragraph{\emph{Proof of Part (1):}} Consider $X_i$, where $i\in G_k^\ast$ for some $k\in[K]$.
Fix $h_U=c_U(\log n/n_k)^{1/q_k}$ and $h_L=c_L(\log n/n_k)^{1/q_k}$ for two sufficiently large constant $c_U$ and $c_L$ with $c_U=2c_L$. We use notation $N(X_i,h)$ to denote the number of points from $\{X_i\}_{i=1}^n$ that is within $h$ distance from $X_i$.

Recall that $d_k^\dagger(X_i) := d_{k,h}^\dagger(X_i)$ in Lemma~\ref{lem:node_degree} is the number of points from $\{X_i\}_{i\in G_k^\ast}$ that is within $h$ distance to $X_i$ (we will choose $h=h_L$ and $h=h_U$ later). Here we put an subscript $h$ in $d_{k,h}^\dagger(X_i)$ to indicate the dependence of $d_k^\dagger$ on $h$. The condition on $\delta_{kk'}$ implies that any point outside the $k$th cluster $D_k$ has distance at least $C'(\log n/n_k)^{1/q_k} \geq \max\{h_L,h_U\}$. Therefore all points that are within $h_L(h_U)$ distance to $X_i$ must belong to $\mathcal M_k$, implying $N(X_i,h) = d_{k,h}^\dagger(X_i)$. 
Consequently, from the proof of Lemma~\ref{lem:node_degree} (take $h=h_U$ and $h=h_L$ respectively), we have (a concentration inequality as~\eqref{Eqn:degree_con} plus the expectation bound~\eqref{Eqn:Expected-degree}),
\begin{align*}
&\mathbb P \big( N(x,h_L) \leq c_1 (1-t)\, n_k h_L^{q_k} \big) \leq \exp\{- c_1' n_k h_L^{q_k} t^2\},\\
&\mathbb P \big( N(x,h_U) \geq c_2 (1+t)\, n_k h_U^{q_k} \big) \leq \exp\{- c_2' n_k h_U^{q_k} t^2\},\quad t>0, x \in \cM_{k}, 
\end{align*}
where constants $(c_1,c_1')$ and $(c_2,c_2')$ only depend on the constant $\eta_k$ in the two-sided bound on density $p_k(\cdot)$ on the $k$th region $\mathcal M_k$.
Let $c_U$ be sufficiently large such that $c_2 c_U^{q_k} = C$, where $C$ is the constant appearing in the neighborhood parameter $k_0=\lfloor C\log n\rfloor$. By taking $t$ such that $(1-t) c_1 c_L^{q_k} = C/2$ in the first inequality of the preceding display, and $t=1$ in the second inequality, we obtain that  
\begin{align*}
&\mathbb P \big( N(X_{i},h_L) \leq k_0/2 \big) \leq \exp\{- c_1'' \,C \log n\},\\
&\mathbb P \big( N(X_{i},h_U) \geq 2k_0 \big) \leq \exp\{- c_2''\, C\log n\},
\end{align*}
where $c_1''$ and $c_2''$ are two constants independent of $C$. For large enough $C$, we can make these two probabilities smaller than $n^{-2-\xi}$. Since the event $\{N(X_i,h_L) \leq k_0/2\}\cap\{N(X_i,h_U) \geq 2k_0\}$ implies $h_i \in[h_L,h_U]$ for each $i\in[n]$, a union bound argument over $i=1,2,\ldots,n$ leads to the claimed two sided bound for $h_i$ (with probability at least $1-n^{-\xi}$).

\smallskip

\paragraph{\emph{Proof of Part (2):}} 
Using the two sided bound in the proof of Part (1), the proof follows same steps in the proof of Theorem~\ref{thm:main}, with the only exception in proving a counterpart of Lemma~\ref{Lem:T_2} for bounding the spectral gap $\gamma(P_{n,k})$ of chain $P_{n,k}$ on $\{X_i\}_{i\in G_k^\ast}$ in cluster $G_k^\ast$ for $k\in [K]$, since $P_{n,k}$ now has different bandwidth parameter $h_i$ at each observed point $X_i$ (that is, a bandwidth parameter inhomogeneous chain). More precisely, it remains to show that with high probability, it holds that
\begin{align*}
\gamma(P_{n,k})\geq C' \lambda_1(\mathcal M_k) \,(\log n /n_k)^{2/q_k}, \quad \mbox{for some constant $C'>0$.}
\end{align*}
Here, we assume without loss of generality that the absolute spectral gap of $P_{n,k}$ is dominated by one minus its second largest eigenvalue. Otherwise, we can always consider the lazy random walk by replacing $P_n$ with $P_n/2 +I_n/2$ in the diffusion $K$-mean SDP, whose absolute spectral gap is $\gamma(P_{n,k})/2$. 

Our proof strategy is again based on the Markov chains comparison theorem (Lemma~\ref{lem:comparison_thm}) by comparing this bandwidth parameter inhomogeneous chain with a bandwidth parameter homogeneous chain with $h_i \equiv h_L$, for each $i\in G_k^\ast$, where $h_L$ is the lower bound of $h_i$ in the proof of Part (1). In particular, a lower bound on the spectral gap of the latter is already derived in Lemma~\ref{Lem:T_2} as of order $\lambda_1(\mathcal M_k) h_L^2$.

Fix the cluster index $k\in [K]$, and without loss of generality assume $G_k^\ast=\{1,2,\ldots,n_k\}$. To avoid confusion of notation, we put an superscript ``IH" 
indicate the bandwidth parameter inhomogeneous chain, and ``H" to denote the bandwidth parameter homogeneous chain with $h_i \equiv h_L$. For example, $P_{n,k}^{IH}$ and $\mathcal E^{IH}$ denote the transition probability matrix and the Dirichlet form~\eqref{Eqn:Dirichlet_form} (defined in Lemma~\ref{lem:comparison_thm}), respectively, associated with the bandwidth parameter inhomogeneous chain.

Due to the fact that $h_i \geq h_L$ for all $i\in G^\ast_k$, we immediately have 
\begin{align*}
2 N_k^H \mathcal E^{H}(f) &= \sum_{1\leq i,j\leq n_k} (f_i-f_j)^2 \kappa^{H}(X_i,X_j)  \\
&\leq \sum_{1\leq i,j\leq n_k} (f_i-f_j)^2 \kappa^{IH}(X_i,X_j) = 2 N_k^{IH}\mathcal E^{IH}(f), \quad\mbox{for all $f\in\mathbb R^{n_k}$,}
\end{align*}
where $k^{H}(X_i,X_j) = \exp\{-\|X_i-X_j\|^2/(2h_L^2)\}$ and $\kappa^{IH}(X_i,X_j) = \exp\{-\|X_i-X_j\|^2/(2h_ih_j)\}$, and $(N_k^H,N_k^{IH})$ are the respective total degrees within $G_k^\ast$.
Moreover, recall that the stationary distributions of $P_{n,k}^{IH}$ and $P_{n,k}^{H}$ are $\pi_{n,k}^{IH} = d_{n,k}^{IH}(X_i) / N_k^{IH}$ and $\pi_{n,k}^{H} = d_{n,k}^{H}(X_i) / N_k^{H}$, for $i\in G_k^\ast$, respectively, where $d_{n,k}^{IH}(X_i) =\sum_{j\in G_k^\ast} \kappa^{IH}(X_i,X_j)$, $d_{n,k}^{H}(X_i) =\sum_{j\in G_k^\ast} \kappa^{H}(X_i,X_j)$ are the node degrees, and $N_k^{IH}= \sum_{i\in G_k^\ast} d_{n,k}^{IH}(X_i)$, $N_k^{H}= \sum_{i\in G_k^\ast} d_{n,k}^{H}(X_i)$ are the total degrees.

Now we can apply Lemma~\ref{lem:comparison_thm} with $B= N_k^{IH}/ N_k^{H}$ and Lemma~\ref{Lem:T_2} (to the homogeneous chain $P_{n,k}^H$) to obtain
\begin{align*}
\gamma(P_{n,k}^{IH}) \geq \min_{i\in G_k^\ast} \bigg[\frac{d_{n,k}^{H}(X_i)}{d_{n,k}^{IH}(X_i)} \bigg]\, \gamma(P_{n,k}^{H}) \quad\mbox{and}\quad \gamma(P_{n,k}^{H})\geq C \lambda_1(\mathcal M_k) \,h_L^2,
\end{align*}
for some constant $C>0$.
Similar to the proof of Lemma~\ref{lem:node_degree}, we can apply the concentration inequality to the nodes degree with bandwidth $h=h_U$ and $h=h_L$ to obtain that with probability at least $1-c_2\, n^{-\xi}$, 
\begin{align*}
\max_{i\in G_k^\ast} \bigg|\,\frac{\sum_{j\in G_k^\ast}\exp\{-\|X_i-X_j\|^2/(2h_L^2)\} }{n_k(\sqrt{2\pi} h_L)^{q_k}}  - p_k(X_i) \bigg| \leq C'\bigg( h_L + \sqrt{\frac{\log n_k}{n_k h_{L}^{q_{k}}}}\,\bigg),\\
\max_{i\in G_k^\ast} \bigg|\,\frac{\sum_{j\in G_k^\ast}\exp\{-\|X_i-X_j\|^2/(2h_U^2)\} }{n_k(\sqrt{2\pi} h_U)^{q_k}}  - p_k(X_i) \bigg| \leq C'\bigg( h_U+ \sqrt{\frac{\log n_k}{n_k h_{U}^{q_{k}}}}\,\bigg), 
\end{align*}
for all $i\in G_k^\ast$. Combining this with the sandwiched bound for $h_i$ in Part (1), we obtain
\begin{align*}
c_1\, n_k h_L^{q_k}  &\leq  d_{n,k}^{H}(X_i)=\sum_{j\in G_k^\ast}\exp\{-\|X_i-X_j\|^2/(2h_L^2)\}  \\
&\leq d_{n,k}^{IH}(X_i)  = \sum_{j\in G_k^\ast}  \exp\{-\|X_i-X_j\|^2/(2h_ih_j)\} \\
&\leq \sum_{j\in G_k^\ast} \exp\{-\|X_i-X_j\|^2/(2h_U^2)\} \leq c_1'\, n_k h_U^{q_k},\ \ \ \mbox{for all $i\in G_k^\ast$.}
\end{align*}

Putting all pieces together, we obtain that it holds with probability at least $1-c_2\, n^{-\xi}$ that
\begin{align*}
\gamma(P_{n,k}^{IH})\geq C' \lambda_1(\mathcal M_k) \,(\log n /n_k)^{2/q_k}, \quad \mbox{for some constant $C'>0$.}
\end{align*}

\subsection{Proof of Lemma \ref{lem:feasibility_SDP_lambda_infinity}}
\emph{Proof of Part (1):}
Since $n\rho >\lambda_{\max}(A)$, the matrix $n\rho I_n - A$ is positive definite.
For any $Z \in \sC$, from the constraint $Z \vone_{n} = \vone_{n}$, we see that $(1, n^{-1/2} \vone_{n})$ is an eigen-pair of $Z$. In addition, since $Z \succeq 0$, all eigenvalues $\lambda_{1},\dots,\lambda_{n}$ of $Z$ are non-negative. Let $U_1=n^{-1/2}\vone_{n}, U_2,\ldots, U_n$ denote the corresponding eigenvectors of $Z$.
Thus the objective function
\begin{align*}
&\langle A ,Z\rangle - n\rho \tr(Z) =  - \langle n\rho I_n - A, Z\rangle \\
&= -\frac{1}{n}\,\vone_{n}^T(n\rho I_n - A)\vone_{n} - \sum_{i=2}^n \lambda_i\, U_i^T(n\rho I_n - A)U_i \leq -\frac{1}{n}\vone_{n}^T(n\rho I_n - A)\vone_{n},
\end{align*}
where the equality holds if and only if $\lambda_2=\cdots=\lambda_n=0$. Note that $Z^{\diamond} \in \sC$ is a feasible solution for~\eqref{eqn:clustering_Kmeans_sdp_unknown_K} and $Z^{\diamond}$ has a non-zero eigenvalue equal to $1$ and $(n-1)$ zero eigenvalues. Therefore, $Z^{\diamond}$ is the unique solution of the SDP~\eqref{eqn:clustering_Kmeans_sdp_unknown_K}.

\smallskip
\noindent \emph{Proof of Part (2):}
For any $Z\in \sC$, since $Z$ is a symmetric matrix satisfying $Z \geq 0$ and $Z  \vone_{n} = \vone_{n}$, $Z$ is a stochastic matrix and all its eigenvalues have absolute values less than or equal to one. Moreover, from the positive semi-definiteness of $Z$, all eigenvalues of $Z$ lie in the $[0,1]$ interval. Now since $n\rho <\lambda_{\min}(A)$, the matrix $A- n\rho I_n$ is positive definite. From matrix H\"older's inequality, the objective function satisfies
\begin{align*}
&\langle A ,Z\rangle - n\rho \tr(Z) =  \langle A- n\rho I_n, Z\rangle \\
&\leq \matnorm{A- n\rho I_n}_{\ast}\, \matnorm{Z}_{\mbox{\scriptsize op}} = \matnorm{A- n\rho I_n}_{\ast},
\end{align*}
where the equality holds if and only if all eigenvalues of $Z$ equal to one (since $A- n\rho I_n$ is strictly positive definite). Note that $Z^{\dagger} = I_{n} \in \sC$ is a feasible solution for~\eqref{eqn:clustering_Kmeans_sdp_unknown_K}. Therefore, $Z^{\dagger}$ is the unique solution of the SDP~\eqref{eqn:clustering_Kmeans_sdp_unknown_K}.

\smallskip
\noindent \emph{Proof of Part (3):}
By the optimality and feasibility of the solutions $\tilde Z_{\rho_1}$ and $\tilde Z_{\rho_2}$, we have
\begin{equation}\label{Eqn:optimality}
\begin{aligned}
\langle A ,\tilde Z_{\rho_2}\rangle - \rho_1 \tr (\tilde Z_{\rho_2}) &\leq \langle A ,\tilde Z_{\rho_1}\rangle - \rho_1 \tr (\tilde Z_{\rho_1}),\ \ \mbox{and}\\
\langle A ,\tilde Z_{\rho_1}\rangle - \rho_2 \tr (\tilde Z_{\rho_1}) &\leq \langle A ,\tilde Z_{\rho_2}\rangle - \rho_2 \tr (\tilde Z_{\rho_2}).
\end{aligned}
\end{equation}
Adding these two inequalities together yields
\begin{align*}
(\rho_1-\rho_2) \big( \tr (\tilde Z_{\rho_1}) -  \tr (\tilde Z_{\rho_2})\big) \leq 0,
\end{align*}
which implies $\tr (\tilde Z_{\rho_1}) \geq \tr (\tilde Z_{\rho_2})$ when $\rho_1> \rho_2$. Moreover, if at least one of the SDPs has a unique solution, then at least one of the two inequalities in~\eqref{Eqn:optimality} is strict, implying 
$$
(\rho_1-\rho_2) \big( \tr (\tilde Z_{\rho_1}) -  \tr (\tilde Z_{\rho_2})\big) < 0,$$
and $\tr (\tilde Z_{\rho_1}) > \tr (\tilde Z_{\rho_2})$.


\appendix

\section{Proofs on spectral decompositions}\label{app:A}

\begin{proof}[Proof of Lemma \ref{lem:spectral_decomposition_Markov_chain}]
Since $\kappa$ is symmetric and positive semidefinite, so is $\mathscr R$. Thus the corresponding operator $\mathscr R$ is self-adjoint in $L^2(\rd\mu)$ and is also compact if \eqref{eqn:kernel_integrability_condition} holds. Then $\mathscr R$ has a discrete set of nonnegative eigenvalues $\lambda_0\geq \lambda_1\geq\cdots \geq 0$, and has the following eigen-decomposition
\begin{align*}
\mathscr R(x,y) = \sum_{j=0}^\infty \lambda_j\, \phi_j(x)\,\phi_j(y),\quad\forall x,y\in S,
\end{align*}
where $\{\phi_j\}_{j=0}^\infty$ is an orthonormal basis of $L^2(\rd\mu)$. Denote $Z = \int_{S} d(y) \, \rd \mu(y)$ so that $\pi(x) = d(x) / Z$. Since
\begin{align*}
\int_{S} \mathscr R(x,y)\sqrt{\pi(y)} \, \rd \mu(y) &= \int_{S} {\kappa(x,y) \over \sqrt{d(x)}\,\sqrt{d(y)}} \sqrt{\pi(y)} \, \rd \mu(y) \\
&= {1 \over Z \sqrt{\pi(x)}} \int_{S} \kappa(x,y) \, \rd \mu(y) = {d(x) \over Z \sqrt{\pi(x)}} = \sqrt{\pi(x)},
\end{align*}
it follows that $\lambda_{0}=1$ is the largest eigenvalue of $\mathscr R$ with the eigenfunction $\phi_{0}(x) = \sqrt{\pi(x)}$ (of unit length) in $L^{2}(\rd \mu)$. In addition, observe that
\begin{align*}
\mathscr R(x,y) = \sqrt{\frac{d(x)}{d(y)}}\,p(x,y), \quad\forall x,y\in S.
\end{align*}
This implies a decomposition of the transition probability $p(x,y)$ as
\begin{align*}
p(x,y) = \sum_{j=0}^\infty \lambda_j\, \psi_j(x)\,\widetilde\psi_j(y),\quad\forall x,y\in S,
\end{align*}
where $\psi_j(x) = \phi_j(x)/\sqrt{d(x)}$ and $\widetilde\psi_j(y) = \phi_j(y)\sqrt{d(y)}$. In particular, for each $j=0,1,\ldots$,
\begin{align*}
\mathscr P\psi_j(x) &= \sum_{l=0}^\infty \lambda_l \psi_l(x)\, \int_S \widetilde\psi_l(y)\, \psi_j(y)\,\rd\mu(y)=\sum_{l=0}^\infty \lambda_l \psi_l(x)\,\delta_{lj} = \lambda_j \psi_j(x),\quad\forall x\in S,
\end{align*}
implying that $\{\psi_j\}_{j=0}^\infty$ are the corresponding (right) eigenfunctions of $\mathscr P$, with unit $L^2(d \rd\mu)$ norm, associated with the same eigenvalues $1=\lambda_0\geq \lambda_1\geq\cdots \geq 0$. Since $\mathscr P$ is the transition operator of a Markov chain, $\lambda_0=1$ and $\psi_0\equiv Z^{-1/2}$. 
\end{proof}

\begin{proof}[Proof of Lemma \ref{lem:spectral_representation_diffusion_distances}]
For $t\in\bN_{+}$ and $x,y\in S$, let $p_{t}(x,y)$ be the $t$-step transition probability from $x$ to $y$. By Lemma \ref{lem:spectral_decomposition_Markov_chain}, we have 
\begin{align*}
p_t(x,y) = \sum_{j=0}^\infty \lambda_j^t\, \psi_j(x)\,\widetilde\psi_j(y) 
\end{align*}
and $\{\widetilde\psi_j\}_{j=0}^\infty$ forms an orthonormal basis of $L^2(\rd\mu/d)$. 
Consequently, by viewing $\lambda_j^t\, \psi_j(x)$ as the coefficient associated with $\widetilde\psi_j$ in the orthogonal expansion of function $p_t(x,\cdot)$, we have
\begin{align*}
\mathscr D_{t}^2(x, y) =\|\, p_{t}(x, \cdot) - p_{t}(y, \cdot)\,\|^2_{L^{2}(\rd\mu/d)}= \sum_{j=0}^{\infty} \lambda_{j}^{2t} \, [\psi_{j}(x) - \psi_{j}(y)]^{2}. 
\end{align*}
\end{proof}

\section{Empirical diffusion affinity}\label{app:B}
Similar to the derivations in Appendix~\ref{app:A}, if we define matrix $R_n\in\bR^{n\times n}$ with 
\begin{align*}
[R_n]_{ij}=\frac{\kappa(X_i,X_j)}{\sqrt{d_n(X_i)}\sqrt{d_n(X_j)}},
\end{align*} 
then $\{\lambda_{n,j}\}_{j=0}^{n-1}$ are also the eigenvalues of $R_n$. Let $\phi_{n,j}\in\bR^n$ denote the unit Euclidean norm eigenvector associated with $\lambda_{n,j}$. Then the empirical probability transition matrix $P_n$ has the decomposition
\begin{align*}
P_{n}^t = \sum_{j=0}^{n-1} \lambda_{n,j}^t\, \psi_{n,j}\,\widetilde\psi_{n,j}^T,
\end{align*}
where $\psi_{n,j} = D_n^{-1/2}\,\phi_{n,j}\in\bR^n$ and $\widetilde\psi_{n,j} = D_n^{1/2}\phi_{n,j}\in\bR^n$, so that $\psi_{n,j}=D_n^{-1} \widetilde\psi_{n,j}$ for each $j\in\{0,1,\dots,n-1\}$.
In particular, $\psi_{n,j}$ has unit $L^2(\mbox{diag}(D_n))$ norm, and $\widetilde\psi_{n,j}$ has unit $L^2(\mbox{diag}(D_n^{-1}))$ norm, for each $j\in\{0,1,\dots,n-1\}$.

In addition, we have the following relation between the diffusion affinity and $P_n^{2t}$,
\begin{align*}
\langle X_i,\, X_j\rangle_{\mathscr D_{n,t}} = \sum_{l=0}^{n-1} \lambda_{n,l}^{2t} \,[\psi_{n,l}]_i \, [\psi_{n,l}]_j =\sum_{l=0}^{n-1} \lambda_{n,l}^{2t} \, [\psi_{n,l}]_i \, [\widetilde\psi_{n,l}]_j d_n^{-1}(X_j)
=[P_n^{2t}D_n^{-1}]_{ij}.
\end{align*}

\section{Technical lemmas}
In this appendix, we collect some technical lemmas used in the proofs of our main results.

\begin{lem}
\label{lem:some_ineq_feasible_set}
Let $Z^{*}$ be defined in (\ref{eqn:Kmeans_true_membership_matrix}). Then for any $Z \in \sC$ defined in \eqref{eqn:clustering_Kmeans_sdp_unknown_K}, we have 
\begin{align}
\label{eqn:ineq_1_feasible_set}
\|Z^{*} - Z^{*} Z Z^{*}\|_{1} = \|Z^{*} - Z^{*} Z\|_{1} =&\, 2\sum_{k=1}^K \sum_{m\neq k} \|Z_{G_{k}^{*} G_{m}^{*}}\|_{1}.
\end{align}
In addition, if $Z$ also satisfies $\tr(Z)=\tr(Z^\ast)$, or
$Z\in \sC_K$, where $\sC_K$ is defined in \eqref{eqn:clustering_Kmeans_sdp}, then
\begin{align}
\label{eqn:ineq_2_feasible_set}
\matnorm{(I-Z^{*}) Z (I-Z^{*})}_{\ast} = &\, \sum_{k=1}^K \sum_{m\neq k} \frac{1}{n_k}\, \|Z_{G_{k}^{*} G_{m}^{*}}\|_{1} \leq {\|Z^{*} - Z^{*} Z\|_{1} \over 2 \underline{n}}, \\
\label{eqn:ineq_3_feasible_set}
\|Z^{*} - Z^{*} Z\|_{1} \leq \|Z^{*} - Z\|_{1} \leq& \, n\,\matnorm{(I-Z^{*}) Z (I-Z^{*})}_{\ast} \leq {2 n \over \underline{n}} \|Z^{*} - Z^{*} Z\|_{1}.
\end{align}
\end{lem}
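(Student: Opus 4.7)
The plan is to prove the three displays in order, with the middle inequality of \eqref{eqn:ineq_3_feasible_set} being the main technical hurdle. Throughout, I would introduce the block sums $S_{k\ell} := \|Z_{G_k^\ast G_\ell^\ast}\|_1 = \sum_{i\in G_k^\ast,\,j\in G_\ell^\ast} Z_{ij}$ (these coincide with $\ell_1$ norms because $Z\geq 0$). The constraint $Z\mathbf 1_n = \mathbf 1_n$ combined with nonnegativity yields the fundamental identity $S_{kk} = n_k - \sum_{m\neq k}S_{km}$, which I will use over and over. I would also record up front that $Z^\ast$ is the orthogonal projection onto $V := \operatorname{span}\{\mathbf 1_{G_k^\ast}\}_{k=1}^K$, i.e.\ $(Z^\ast)^2 = Z^\ast$ (each diagonal block $\tfrac{1}{n_k}\mathbf 1\mathbf 1^T$ is idempotent), so $I-Z^\ast$ is the complementary projection.

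For \eqref{eqn:ineq_1_feasible_set}, I would compute block-wise: for $i\in G_k^\ast, j\in G_\ell^\ast$, $(Z^\ast ZZ^\ast)_{ij}=S_{k\ell}/(n_kn_\ell)$ and $(Z^\ast Z)_{ij}=n_k^{-1}\sum_{i'\in G_k^\ast}Z_{i'j}$. Subtracting from $Z^\ast_{ij}$ and identifying signs entry by entry --- all off-diagonal-block entries are nonpositive, all on-diagonal-block entries are nonnegative thanks to $S_{kk}\leq n_k$ --- the $\ell_1$ norms in both cases collapse to $2\sum_k\sum_{m\neq k}S_{km}$. For \eqref{eqn:ineq_2_feasible_set}, since $(I-Z^\ast)Z(I-Z^\ast)$ is the conjugation of PSD $Z$ by the symmetric matrix $I-Z^\ast$, it is PSD and its nuclear norm equals its trace. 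Using idempotence and cyclicity, $\operatorname{tr}((I-Z^\ast)Z(I-Z^\ast)) = \operatorname{tr}((I-Z^\ast)Z) = \operatorname{tr}(Z)-\operatorname{tr}(Z^\ast Z) = K - \sum_k S_{kk}/n_k = \sum_k\sum_{m\neq k}S_{km}/n_k$, where I use $\operatorname{tr}(Z)=K$ (the added hypothesis). The upper bound by $\|Z^\ast - Z^\ast Z\|_1/(2\underline n)$ then follows from $n_k\geq \underline n$ combined with \eqref{eqn:ineq_1_feasible_set}.

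For \eqref{eqn:ineq_3_feasible_set}, the first inequality $\|Z^\ast - Z^\ast Z\|_1 = \|Z^\ast (Z^\ast - Z)\|_1 \leq \|Z^\ast - Z\|_1$ follows from the entrywise triangle inequality together with column-stochasticity of $Z^\ast$: for any $M$, $\sum_i|(Z^\ast M)_{ij}|\leq \sum_{i,\ell}Z^\ast_{i\ell}|M_{\ell j}| = \sum_\ell|M_{\ell j}|$. The last inequality is immediate by combining \eqref{eqn:ineq_1_feasible_set} and the identity in \eqref{eqn:ineq_2_feasible_set}. \emph{The hard part} is the middle inequality $\|Z^\ast - Z\|_1 \leq n\,\matnorm{(I-Z^\ast)Z(I-Z^\ast)}_\ast$. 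My approach is to invoke the entrywise PSD Cauchy--Schwarz $|M_{ij}|\leq \sqrt{M_{ii}M_{jj}}$, which yields $\|M\|_1\leq (\sum_i\sqrt{M_{ii}})^2\leq n\operatorname{tr}(M) = n\,\matnorm{M}_\ast$ for any PSD $M$. I would then decompose $Z - Z^\ast ZZ^\ast = (I-Z^\ast)Z(I-Z^\ast) + A + A^T$ with $A=Z^\ast Z(I-Z^\ast)$ (using $I = Z^\ast + (I-Z^\ast)$), split $Z^\ast - Z = (Z^\ast - Z^\ast ZZ^\ast) + (Z^\ast ZZ^\ast - Z)$, and control the cross-term by a generalized Cauchy--Schwarz for PSD matrices --- writing $Z=Z^{1/2}Z^{1/2}$ and using $\|Z^{1/2}Z^\ast\|_{\mathrm{op}}\leq 1$ gives $\matnorm{A}_F^2 \leq \matnorm{(I-Z^\ast)Z(I-Z^\ast)}_\ast$. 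The main obstacle is getting the constant to come out to exactly $n$: the natural estimates produce factors like $n\sqrt{\matnorm{(I-Z^\ast)Z(I-Z^\ast)}_\ast}$, and I expect one must use the trace identity of Step~\ref{eqn:ineq_2_feasible_set} to absorb the diagonal-block contribution $\|Z^\ast - Z^\ast ZZ^\ast\|_1$ into the $n\,\matnorm{\cdot}_\ast$ budget, perhaps by reducing to a per-cluster inequality $\sum_{i,j\in G_k^\ast}|1/n_k - Z_{ij}|\leq (n-n_k)(n_k-S_{kk})/n_k$ that one can verify directly from PSD and the row-sum deficit.
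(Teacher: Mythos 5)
Your write-up is a self-contained attempt at a lemma the paper itself does not prove from scratch: the paper's ``proof'' is a two-line citation (Lemma~1 of \cite{GiraudVerzelen2018} for \eqref{eqn:ineq_1_feasible_set} and \eqref{eqn:ineq_3_feasible_set}, inequality~(57) of \cite{BuneaGiraudRoyerVerzelen2016} for \eqref{eqn:ineq_2_feasible_set}). The parts you actually carry out are correct: the block-wise identity \eqref{eqn:ineq_1_feasible_set} (the sign determination uses $S_{kk}\le n_k$ for the $Z^*ZZ^*$ case and column sums of $Z$ being at most one for the $Z^*Z$ case), the trace computation $\matnorm{(I-Z^*)Z(I-Z^*)}_*=\tr(Z)-\tr(ZZ^*)=\sum_k\sum_{m\ne k}S_{km}/n_k$ under $\tr(Z)=K$, the first inequality of \eqref{eqn:ineq_3_feasible_set} via column-stochasticity of $Z^*$, and the last inequality of \eqref{eqn:ineq_3_feasible_set}.

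The genuine gap is the middle inequality $\|Z^*-Z\|_1\le n\,\matnorm{(I-Z^*)Z(I-Z^*)}_*$, which you correctly flag as the crux but do not prove. Your Cauchy--Schwarz machinery handles the corner term $W=(I-Z^*)Z(I-Z^*)$ (giving $\|W\|_1\le n\tr(W)$), but the cross term $A=Z^*Z(I-Z^*)$ only yields $\|A\|_1\le n\|A\|_F\le n\sqrt{\tr(Z^*ZZ^*)\cdot\tr(W)}\le n\sqrt{K\,\tr(W)}$, which scales like $\sqrt{\tr(W)}$ rather than $\tr(W)$ and therefore cannot close when $\tr(W)$ is small --- exactly the obstruction you acknowledge. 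Worse, the fallback you propose, the per-cluster inequality $\sum_{i,j\in G_k^*}|1/n_k-Z_{ij}|\le(n-n_k)(n_k-S_{kk})/n_k$, is false in general: writing $P_k=\sum_{i,j\in G_k^*}(Z_{ij}-1/n_k)_+\ge 0$ and $\epsilon_k=n_k-S_{kk}\ge 0$, the left side equals $2P_k+\epsilon_k$, so your inequality demands $2P_k\le\epsilon_k(n-2n_k)/n_k$, whose right side is negative whenever $n_k>n/2$ and $\epsilon_k>0$ (such feasible $Z$ with $\tr(Z)=K$ exist, e.g.\ $Z=(1-s)Z^*+sZ^{**}$ for a membership matrix $Z^{**}$ of a different $K$-partition). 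Only the sum over $k$ of these per-cluster bounds is what the lemma needs, and establishing that aggregate requires a genuinely different argument --- this is precisely the content of the cited Lemma~1 in \cite{GiraudVerzelen2018}. With your current tools the best you get is $\|Z^*-Z\|_1\le\|W\|_1+6\sum_{k\ne m}S_{km}\le 7n\,\matnorm{W}_*$ (via the four-term decomposition of $Z-Z^*$ and \eqref{eqn:ineq_1_feasible_set}), i.e.\ the right inequality up to a constant; that would suffice for the downstream use in Theorem~\ref{thm:main} but does not prove the lemma as stated.
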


\begin{proof}[Proof of Lemma \ref{lem:some_ineq_feasible_set}]
Inequalities~\eqref{eqn:ineq_1_feasible_set} and~\eqref{eqn:ineq_3_feasible_set}  follow from Lemma 1 in \cite{GiraudVerzelen2018}. Inequality~\eqref{eqn:ineq_2_feasible_set} is due to inequality~(57) in \cite{BuneaGiraudRoyerVerzelen2016}. 
\end{proof}

\begin{lem}
\label{lem:some_ineq_feasible_set_adaptive}
Let $Z^{*}$ be defined in (\ref{eqn:Kmeans_true_membership_matrix}). Then for any $Z \in \sC$ defined in (\ref{eqn:clustering_Kmeans_sdp_unknown_K}), we have
\begin{align*}
\matnorm{(I-Z^{*}) Z (I-Z^{*})}_{\ast}\leq&\, \sum_{k=1}^K \sum_{m\neq k} \frac{1}{n_k}\, \|Z_{G_{k}^{*} G_{m}^{*}}\|_{1} + \tr(Z) - \tr(Z^\ast),\\
\|Z^{*} - Z\|_{1} \leq& \,4n\,\sum_{k=1}^K \sum_{m\neq k} \frac{1}{n_k}\, \|Z_{G_{k}^{*} G_{m}^{*}}\|_{1}+ n \big(\tr(Z) - \tr(Z^\ast)\big),
\end{align*}
and~\eqref{eqn:ineq_1_feasible_set} holds for $Z \in \sC$. 
\end{lem}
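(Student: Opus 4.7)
The plan follows the blueprint of Lemma~\ref{lem:some_ineq_feasible_set}, dropping the trace constraint and paying the residual $\tr(Z)-\tr(Z^\ast)$ as an additive term. The key algebraic fact is that $Z^\ast$ is an orthogonal projector of rank $K$ onto $\mathrm{span}\{\mathbf{1}_{G_k^\ast}:k\in[K]\}$, so $E:=I-Z^\ast$ satisfies $E^2=E$ and $(I-Z^\ast)Z(I-Z^\ast)\succeq 0$ whenever $Z\succeq 0$.

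For the first asserted inequality, I would use PSD-ness of $EZE$ to convert its nuclear norm to its trace, and then simplify using $E^2=E$:
\[
\matnorm{(I-Z^\ast)Z(I-Z^\ast)}_{\ast}=\tr(EZE)=\tr(ZE)=\tr(Z)-\tr(ZZ^\ast).
\]
Using only $Z\mathbf{1}_n=\mathbf{1}_n$ (and not the trace constraint), the block structure of $Z^\ast$ gives $\tr(ZZ^\ast)=\sum_k n_k^{-1}\mathbf{1}_{G_k^\ast}^T Z\mathbf{1}_{G_k^\ast}=\sum_k n_k^{-1}\bigl(n_k-\sum_{m\neq k}\|Z_{G_k^\ast G_m^\ast}\|_1\bigr)=K-\sum_k\sum_{m\neq k}n_k^{-1}\|Z_{G_k^\ast G_m^\ast}\|_1$. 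Combined with $\tr(Z^\ast)=K$, this yields the first claim as an equality. For~\eqref{eqn:ineq_1_feasible_set}, the proof of the corresponding identity in Lemma~\ref{lem:some_ineq_feasible_set} only uses $Z\mathbf{1}_n=\mathbf{1}_n$, $Z=Z^T$ and entrywise nonnegativity; the trace constraint is never invoked there, so the same block-by-block entrywise computation transfers verbatim to $Z\in\sC$.

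For the $\ell^1$ bound, I would decompose
\[
Z-Z^\ast \;=\; EZE \;+\; EZZ^\ast \;+\; Z^\ast ZE \;+\; (Z^\ast Z Z^\ast - Z^\ast)
\]
and bound each piece separately. The PSD block satisfies $\|EZE\|_1\leq n\,\tr(EZE)$ via the elementary inequality $|A_{ij}|\leq\sqrt{A_{ii}A_{jj}}$ followed by Cauchy--Schwarz over the $n$ diagonal entries, and its trace is what the first inequality already computed. For the cross term, the block-constant structure of $Z^\ast$ yields $(EZZ^\ast)_{ij}=n_m^{-1}(Ev_m)_i$ for $i\in G_k^\ast,\ j\in G_m^\ast$ with $v_m:=Z\mathbf{1}_{G_m^\ast}$; entrywise triangle-inequality estimates (splitting the diagonal case $k=m$, where $\sum_{j}Z_{ij}=1$ rewrites $\sum_{j\in G_k^\ast}Z_{ij}$ in terms of off-diagonal sums, versus the off-diagonal case $k\neq m$) yield $\|EZZ^\ast\|_1\leq 4\sum_k\sum_{m\neq k}\|Z_{G_k^\ast G_m^\ast}\|_1$. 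Transposition gives $\|Z^\ast ZE\|_1=\|EZZ^\ast\|_1$, and $\|Z^\ast ZZ^\ast-Z^\ast\|_1$ is controlled by~\eqref{eqn:ineq_1_feasible_set}. The elementary upgrade $\|Z_{G_k^\ast G_m^\ast}\|_1\leq n\cdot n_k^{-1}\|Z_{G_k^\ast G_m^\ast}\|_1$ (since $n_k\leq n$) then absorbs every off-diagonal sum into the claimed form, while the residual $n(\tr(Z)-\tr(Z^\ast))$ survives from the $EZE$ term.

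The main obstacle is the entrywise analysis of the cross term $EZZ^\ast$ in the diagonal case $k=m$: one must rewrite $(Ev_k)_i=\sum_{m'\neq k}\bigl(n_k^{-1}\|Z_{G_k^\ast G_{m'}^\ast}\|_1-\sum_{j'\in G_{m'}^\ast}Z_{ij'}\bigr)$ using doubly-stochasticity, then apply the triangle inequality across $m'\neq k$ to express the contribution purely in terms of off-diagonal block sums. The rest is routine linear algebra of the projector $Z^\ast$ combined with the standard PSD estimate $\|A\|_1\leq n\,\tr(A)$; no new probabilistic or geometric input beyond what entered Lemma~\ref{lem:some_ineq_feasible_set} is needed.
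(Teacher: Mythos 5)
Your proof of the first inequality is correct and in fact sharper than what is asserted: since $Z^\ast$ is an orthogonal projector, $E=I-Z^\ast$ is idempotent, $EZE\succeq 0$, and the chain $\matnorm{EZE}_{\ast}=\tr(EZE)=\tr(Z)-\tr(ZZ^\ast)$ combined with $\tr(ZZ^\ast)=K-\sum_k\sum_{m\neq k}n_k^{-1}\|Z_{G_k^\ast G_m^\ast}\|_1$ (which uses only $Z\vone_n=\vone_n$ and $Z\geq 0$) turns the first display into an identity. The paper simply cites inequality (57) of \cite{BuneaGiraudRoyerVerzelen2016} here, so your self-contained computation is a legitimate and arguably cleaner substitute; your remark that \eqref{eqn:ineq_1_feasible_set} never invokes the trace constraint is also correct.

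The gap is in the second inequality: your constants do not add up to $4$. Write $\Sigma=\sum_k\sum_{m\neq k}\|Z_{G_k^\ast G_m^\ast}\|_1$ and $\Sigma'=\sum_k\sum_{m\neq k}n_k^{-1}\|Z_{G_k^\ast G_m^\ast}\|_1$. Your bounds give $\|EZE\|_1\leq n\Sigma'+n(\tr Z-\tr Z^\ast)$ and $\|EZZ^\ast\|_1+\|Z^\ast ZE\|_1+\|Z^\ast ZZ^\ast-Z^\ast\|_1\leq 4\Sigma+4\Sigma+2\Sigma=10\Sigma$, and the only conversion you invoke is $\Sigma\leq n\Sigma'$ (from $n_k\leq n$), so you land at $\|Z-Z^\ast\|_1\leq 11\,n\Sigma'+n(\tr Z-\tr Z^\ast)$ rather than $4n\Sigma'+n(\tr Z-\tr Z^\ast)$. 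Two changes recover the stated constant. First, group the cross terms as the paper does, namely $(Z^\ast Z-Z^\ast)+(ZZ^\ast-Z^\ast)+(Z^\ast-Z^\ast ZZ^\ast)$; each of these has $\ell^1$ norm exactly $2\Sigma$ by \eqref{eqn:ineq_1_feasible_set} and symmetry, for a total of $6\Sigma$. Your grouping is lossier because $EZZ^\ast=(ZZ^\ast-Z^\ast)+(Z^\ast-Z^\ast ZZ^\ast)$ and $Z^\ast ZE=(Z^\ast Z-Z^\ast)+(Z^\ast-Z^\ast ZZ^\ast)$, so the triangle inequality charges the piece $Z^\ast-Z^\ast ZZ^\ast$ three times (twice hidden in the cross terms, once explicitly) instead of letting two of those occurrences cancel. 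Second, replace $n_k\leq n$ by the pairwise bound: for $k\neq m$ one has $n_k+n_m\leq n$ and $n_k^{-1}+n_m^{-1}\geq 4/(n_k+n_m)\geq 4/n$, so $2\|Z_{G_k^\ast G_m^\ast}\|_1\leq \tfrac{n}{2}\,(n_k^{-1}+n_m^{-1})\|Z_{G_k^\ast G_m^\ast}\|_1$ and, summing over unordered pairs, $\Sigma\leq \tfrac{n}{2}\Sigma'$. Then $6\Sigma\leq 3n\Sigma'$ and the total is $4n\Sigma'+n(\tr Z-\tr Z^\ast)$ as claimed. For the downstream use in Theorem~\ref{thm:main_adaptive_lambda} the precise constant is immaterial, but as a proof of the lemma as stated your argument establishes it only with $11$ in place of $4$.
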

\begin{proof}[Proof of Lemma \ref{lem:some_ineq_feasible_set_adaptive}]
The first inequality follows from inequality~(57) in \cite{BuneaGiraudRoyerVerzelen2016}. The second one follows from the first, ~\eqref{eqn:ineq_1_feasible_set}, and the following decomposition,
\begin{align*}
Z - Z^\ast  = (I-Z^{*}) Z (I-Z^{*})  + (Z^\ast Z - Z^\ast) + (Z Z^\ast - Z^\ast) + (Z^\ast - Z^\ast ZZ^\ast),
\end{align*}
with inequality $\|U\|_1 \leq n \matnorm{U}_{\mbox{\scriptsize op}} \leq n \matnorm{U}_{\ast}$ for any matrix $U\in \bR^{n\times n}$. 
\end{proof}

\bibliographystyle{plain}
\bibliography{clustering_sdp}

\end{document}